\newcommand*{\myov}[1]{\overbracket[0.65pt][-1pt]{#1}}
\newcommand\obx{\mbox{${\myov{\bm{x}}}$}}
\newcommand\oobx{\mbox{${\myov{\myov{\bm{x}}}}$}}
\newcommand\oobw{\mbox{${\myov{\myov{{\bm{w}}}}}$}}
\newcommand\argmin{\mathop{\rm argmin}}
\newcommand\Span{\mathop{\rm span}}
\title{A single potential governing convergence of conjugate gradient, accelerated
gradient and geometric descent\thanks{Supported
in part by a grant from the U.~S.~Air Force Office of Scientific Research and in part
by a Discovery Grant from the Natural Sciences and Engineering Research Council
(NSERC) of Canada.}}
\author{Sahar Karimi\thanks{Department of Combinatorics \& Optimization,
University of Waterloo, 200 University Ave.~W., Waterloo, ON, N2L 3G1,
Canada, {\tt sahar.karimi@gmail.com}.} \and 
Stephen Vavasis\thanks{Department of Combinatorics \& Optimization,
University of Waterloo, 200 University Ave.~W., Waterloo, ON, N2L 3G1,
Canada, {\tt vavasis@uwaterloo.ca.}}}
\renewcommand\b{\bm{b}}
\renewcommand\c{\bm{c}}
\newcommand\g{\bm{g}}
\newcommand\p{\bm{p}}
\renewcommand\r{\bm{r}}
\newcommand\R{\mathbb{R}}
\renewcommand\v{\bm{v}}
\newcommand\w{\bm{w}}
\newcommand\x{\bm{x}}
\newcommand\y{\bm{y}}
\newcommand\z{\bm{z}}
\newcommand\eps{\epsilon}
\newcommand\bz{\bm{0}}
\newtheorem{lemma}{Lemma}
\newtheorem{theorem}{Theorem}
\newcommand{\eref}[1]{$(\ref{#1})$}
\newcommand{\aff}{\mathop{\rm aff}}
\newenvironment{proof}{\noindent {\bf Proof. }}{\hfill $\square$}
\begin{document}
\maketitle
\begin{abstract}
Nesterov's accelerated gradient (AG)
method for minimizing a smooth strongly convex function $f$ is
known to reduce $f(\x_k)-f(\x^*)$ by a factor 
of $\eps\in(0,1)$ after $k=O(\sqrt{L/\ell}\log(1/\eps))$ iterations, where
$\ell,L$ are the two parameters of smooth strong convexity. Furthermore,
it is known that this is the best possible complexity in the function-gradient oracle
model of computation.  Modulo a line search, the geometric descent (GD)
method of Bubeck, Lee and Singh has the same bound for this class of functions.
The method of linear conjugate gradients (CG) 
also satisfies the same
complexity bound in the special case of strongly convex quadratic functions,
but in this special case it can be faster than the AG and GD methods.

Despite similarities in the algorithms and their
asymptotic convergence rates, the conventional analysis of the
running time of CG is mostly disjoint
from that of AG and GD.  The analyses of 
the AG and GD methods are also rather distinct.

Our main result is analyses of the three methods that share several
common threads: all three analyses show
a relationship to a certain ``idealized algorithm'', all three
establish the convergence rate
through the use of the Bubeck-Lee-Singh geometric lemma, and all three
have the same potential
that is computable at run-time and exhibits decrease
by a factor of $1-\sqrt{\ell/L}$ or better
per iteration.

One application of these analyses is that they open the possibility of hybrid
or intermediate algorithms.  One such algorithm is proposed herein
and is shown to perform well in  computational tests.
\end{abstract}


\section{First-order methods for strongly convex functions}
Three methods for minimizing smooth, strongly
convex functions are considered in this work, conjugate gradient,
accelerated gradient, and geometric descent. 
CG is the oldest and perhaps best known of the methods.  It
was introduced by Hestenes and Stiefel \cite{hestenesstiefel}
for 
minimizing strongly convex quadratic functions
of the form $f(\x)=\x^TA\x/2-\b^T\x$, where $A$ is
a symmetric positive definite matrix.

There is a significant body of work on
gradient methods for more general
smooth, strongly convex functions.  We say that a 
differentiable convex function $f:\R^n\rightarrow\R$ is
{\em smooth, strongly convex} 
\cite{hiriarturrutylemarechal}
if there exist two scalars $L\ge\ell>0$ such that
for all $\x,\y\in\R^n$,
\begin{equation}
\ell\Vert\x-\y\Vert^2/2\le f(\y)-f(\x)-\nabla f(\x)^T(\y-\x)\le L\Vert\x-\y\Vert^2/2.
\label{eq:strconvdef}
\end{equation}
This is equivalent to assuming convexity and lower and upper
Lipschitz constants on the gradient:
$$\ell\Vert\x-\y\Vert \le \Vert\nabla f(\x)-\nabla f(\y)\Vert \le
L\Vert\x-\y\Vert.$$

Nemirovsky and Yudin \cite{NemYud83}
proposed
a method for minimizing smooth strongly convex
functions requiring
$k=O(\sqrt{L/\ell}\log(1/\eps))$ iterations to produce an iterate 
$\x_k$ such that $f(\x_k)-f(\x^*)\le\eps(f(\x_0)-f(\x^*))$, where
$\x^*$ is the optimizer (necessarily unique under the assumptions made).  
A drawback of their method is that it requires a two-dimensional
optimization on each iteration that can
be cumbersome to implement (to the best of our knowledge,
the algorithm was not ever widely adopted).
Nesterov \cite{Nesterov:k2} proposed
another method, nowadays
known as the ``accelerated gradient'' (AG) method, 
which achieves the same optimal complexity that requires a single
function and gradient evaluation on each iteration.

In the special case of strongly convex quadratic functions, 
the parameters $\ell$ and $L$ appearing in \eref{eq:strconvdef}
correspond to $\lambda_{\min}(A)$ and $\lambda_{\max}(A)$, the extremal
eigenvalues of $A$.  
The conjugate
gradient method has already been known to satisfy the asymptotic
iteration bound $k=O(\sqrt{L/\ell}\log(1/\eps))$ since the work
of Daniel (1967) described below.

Although the two methods satisfy the same asymptotic bound,
the analyses of the two methods are completely
different.  In the case of AG, there are
two analyses by Nesterov \cite{Nesterov:k2, Nesterov:book}.
In our own previous work \cite{KarimiVavasis2016}, we provided a third
analysis based on another potential.

In the case of linear
conjugate gradient, we are aware of no direct analysis of the algorithm
prior to our own previous work \cite{KarimiVavasis2016}.
By ``direct,'' we mean an analysis of $f(\x_{k})-f(\x^*)$ using the
recurrence inherent in CG.  Instead, the standard
analysis introduced by Daniel, whose theorem is
stated precisely below, proves that another iterative method, for
example Chebyshev iteration \cite{GVL} or the heavy-ball iteration
\cite{Polyak,bertsekas} achieves reduction of 
$\left(1-O(\sqrt{\ell/L})\right)$ per iteration.  Then one appeals
to the optimality of the CG iterate in the Krylov space generated
by all of these methods to claim that the CG iterate must be at
least as good as the others.

Recently, Bubeck, Lee and Singh \cite{bubeck} proposed the
geometric descent (GD) algorithm for
analysis of a variant of accelerated
gradient \cite{bubeck}, called ``geometric descent'' (GD).
As presented by the authors, the algorithm requires an exact
line-search on each iteration, although it is possible that similar theoretical
guarantees could be established for an approximate
line search.  Under the assumption
that the line-search requires a constant number of function and 
gradient evaluations, then GD also requires
$k=O(\sqrt{L/\ell}\log(1/\eps))$ iterations.

We propose analyses of these three algorithms that share several 
common features.  
First, all three algorithms 
can be analyzed using the geometric lemma
of Bubeck, Lee and Singh, which is presented in Section~\ref{sec:BLSlemma}.
Second, all three are related to an ``idealized'' unimplementable
algorithm which is described and analyzed in Section~\ref{sec:idealized}.  
Finally, the convergence
behavior for all three of them is governed by a potential $\tilde\sigma_k$,
which has the following three properties:
\begin{enumerate}
\item
There exists an auxiliary sequence of vectors $\y_0,\y_1,\ldots$ such that
$$\tilde\sigma_k^2 \ge \Vert\y_k-\x^*\Vert^2 +\frac{2(f(\x_k)-f(\x^*))}{\ell},$$
for $k=0,1,2\ldots,$
\item
$\displaystyle
\tilde\sigma_{k+1}^2 \le \left(1-\sqrt{\frac{\ell}{L}}\right)\tilde\sigma_k^2$, and
\item
$\tilde\sigma_k$ is computable on each iteration (assuming prior knowledge of
  $\ell,L$) in $O(n)$ operations.  Our definition of ``computable'' is
  explained in more detail in Section~\ref{sec:computable}.
\end{enumerate}
These results are established for the GD algorithm in Section~\ref{sec:GDanalysis},
for the CG algorithm in
Section~\ref{sec:CGanalysis2},
and for the 
AG algorithm in Section~\ref{sec:AGanalysis}.

The relationship between IA and the three algorithms is explained in detail
as follows.
Section~\ref{sec:CGanalysis} shows that CG exactly implements IA
for quadratic objective functions even though IA is in general unimplementable.
On the other hand, GD (analyzed in Section~\ref{sec:GDanalysis2}) and AG
(analyzed in Section~\ref{sec:AGanalysis2})
both simulate IA in the sense
that they produce optimal iterates given  partial information
about the objective in the current iterate.

Because the three algorithms each compute a scalar $\tilde \sigma_k$ satisfying
the above properties, it becomes straightforward to create hybrids.  In other
words, the above analysis treats all three algorithms as essentially 1-step
processes as opposed to long inductive chains.  In Section~\ref{sec:hybrid} we
propose a hybrid CG algorithm that performs well in computational tests, which
are described in Section~\ref{sec:comp}.
The reason from making a hybrid CG algorithm is that the performance
of linear conjugate gradient on specific instances can be much better than the
worst-case bound given by Daniel's theorem; the performance on specific
instances is highly governed by the eigenvalues of $A$.  Therefore, using
a conjugate-gradient-like algorithm for a nonlinear problem may also perform
better than the $(1-\sqrt{\ell/L})^k$ worst-case convergence bound.  This is
also the motivation for traditional nonlinear conjugate gradient, as we
discuss below.

We conclude this introductory section with a few remarks about our
previous related manuscript \cite{KarimiVavasis2016}.  In that work, we established
that a potential defined by
$$\Psi_k=\Vert\y_k-\x^*\Vert^2 +\frac{2(f(\x_k)-f(\x^*))}{\ell}$$
decreases by a factor $(1-\sqrt{\ell/L})$ per iteration for both CG
and AG.  This $\Psi_k$ is not computable since $\x^*$ is not known
{\em a priori}, and therefore our previous result does not have any
immediate algorithmic application.  Our notion of ``computability''
is defined in more detail in Section~\ref{sec:computable}.
The potential $\tilde\sigma_k$ developed
herein is computable on every step and therefore may be used to guide
a hybrid algorithm.  In addition, the current work also applies to
the GD method, which was not addressed in our previous manuscript.

\section{Notation}
Define $B(\x,r)=\{\y\in\R^n:\Vert \x-\y\Vert\le r\}$, i.e., the
closed ball centered at $\x\in\R^n$ of radius $r$.

An {\em affine set}
is a set of the form $\mathcal{M}=\{\c+\w:\w\in\mathcal{W}\}$ where $\c\in\R^n$ is fixed
and $\mathcal{W}\subset \R^n$ is a linear subspace.
We write this as $\mathcal{M}=\c+\mathcal{W}$, a special case of a Minkowski sum.
Another notation for an affine set is
$\aff\{\x_1,\ldots,\x_k\}$, which is defined as
$\{\alpha_1\x_1+\cdots+\alpha_k\x_k:\alpha_1+\cdots+\alpha_k=1\}$.
If $\w_1,\ldots,\w_k$ span $\mathcal{W}$, then
it is clear that 
$\c+\mathcal{W}=\aff\{\c,\c+\w_1,\ldots,\c+\w_k\}$.

Suppose $\mathcal{U}$ is an affine subset of $\R^n$.
The set ${\bf T}\mathcal{U}=\{\x-\y:\x,\y\in\mathcal{U}\}$ 
is called the {\em tangent space}
of $\mathcal{U}$ and is a linear subspace.  If $\mathcal{U}$ is presented as
$\mathcal{U}=\c+\mathcal{W}$, where $\mathcal{W}$ is a linear subspace, then
it follows that ${\bf T}\mathcal{U}=\mathcal{W}$.

\section{Preliminary lemmas}
\label{sec:BLSlemma}

We start with a special case of
a lemma from Drusvyatskiy et al.\ \cite{Drusvyatskiy}, which
is an extension of work by Bubeck et al.\ \cite{bubeck}:

\begin{lemma}
Suppose $\x,\y\in\R^n$.  Let
$\delta,\rho,\sigma$ be three nonnegative scalars
such that $\delta\le \Vert\x-\y\Vert$.
Suppose $\lambda\in[0,1]$ and 
\begin{equation}
\z=(1-\lambda)\x+\lambda\y.
\label{eq:lemmaz}
\end{equation}
Then
$$B(\x,\rho)\cap B(\y,\sigma)\subset B(\z,\xi),$$
where
\begin{equation}
\xi = \sqrt{(1-\lambda)\rho^2+\lambda\sigma^2-\lambda(1-\lambda)\delta^2},
\label{eq:xidef}
\end{equation}
The
argument of the square-root in \eref{eq:xidef} is guaranteed to be nonnegative
whenever $B(\x,\rho)\cap B(\y,\sigma)\ne \emptyset$,
or equivalently, whenever $\rho+\sigma\ge \Vert\x-\y\Vert$.
\label{lem:ballintersect1}
\end{lemma}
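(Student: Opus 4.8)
The plan is to establish the set inclusion by verifying that it holds pointwise: I would fix an arbitrary $\u\in B(\x,\rho)\cap B(\y,\sigma)$ and show that $\Vert\u-\z\Vert\le\xi$. The whole argument then rests on a single ``variance'' identity that rewrites the squared distance from $\u$ to the convex combination $\z$ in terms of the squared distances from $\u$ to the two endpoints $\x,\y$ and the squared distance between the endpoints themselves.

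To derive that identity, I would use \eref{eq:lemmaz} together with $(1-\lambda)+\lambda=1$ to write $\u-\z=(1-\lambda)(\u-\x)+\lambda(\u-\y)$. Expanding the squared norm gives
\[
\Vert\u-\z\Vert^2=(1-\lambda)^2\Vert\u-\x\Vert^2+2\lambda(1-\lambda)(\u-\x)^T(\u-\y)+\lambda^2\Vert\u-\y\Vert^2.
\]
Substituting the polarization relation $2(\u-\x)^T(\u-\y)=\Vert\u-\x\Vert^2+\Vert\u-\y\Vert^2-\Vert\x-\y\Vert^2$ and collecting terms (the coefficients $(1-\lambda)^2+\lambda(1-\lambda)$ and $\lambda(1-\lambda)+\lambda^2$ simplify to $1-\lambda$ and $\lambda$ respectively) yields
\[
\Vert\u-\z\Vert^2=(1-\lambda)\Vert\u-\x\Vert^2+\lambda\Vert\u-\y\Vert^2-\lambda(1-\lambda)\Vert\x-\y\Vert^2.
\]

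Now I would bound the right-hand side. Because $\u$ lies in both balls, $\Vert\u-\x\Vert\le\rho$ and $\Vert\u-\y\Vert\le\sigma$; since $\lambda\in[0,1]$, the coefficients $1-\lambda$ and $\lambda$ are nonnegative, so those two terms are bounded above by $(1-\lambda)\rho^2$ and $\lambda\sigma^2$. The subtle point, and the only place where care is needed, is the last term: its coefficient $-\lambda(1-\lambda)$ is nonpositive, so the hypothesis $\Vert\x-\y\Vert\ge\delta$ \emph{reverses} direction and gives $-\lambda(1-\lambda)\Vert\x-\y\Vert^2\le-\lambda(1-\lambda)\delta^2$. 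Adding the three bounds produces $\Vert\u-\z\Vert^2\le(1-\lambda)\rho^2+\lambda\sigma^2-\lambda(1-\lambda)\delta^2=\xi^2$, i.e.\ $\u\in B(\z,\xi)$, which establishes the claimed inclusion.

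Finally, the nonnegativity assertion comes almost for free from the same identity: if $B(\x,\rho)\cap B(\y,\sigma)\ne\emptyset$, then choosing any $\u$ in it gives $\xi^2\ge\Vert\u-\z\Vert^2\ge0$, so the radicand in \eref{eq:xidef} is well defined. The stated equivalence with $\rho+\sigma\ge\Vert\x-\y\Vert$ is just the standard criterion for two closed balls to meet: the triangle inequality gives $\Vert\x-\y\Vert\le\Vert\x-\u\Vert+\Vert\u-\y\Vert\le\rho+\sigma$ for any common point $\u$, while conversely a point suitably placed on the segment from $\x$ to $\y$ witnesses nonemptiness whenever $\rho+\sigma\ge\Vert\x-\y\Vert$. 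I expect no genuine obstacle here; the identity is the crux of the proof, and the one thing to watch is the reversed inequality caused by the nonpositive coefficient of $\Vert\x-\y\Vert^2$.
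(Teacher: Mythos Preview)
Your proposal is correct and follows essentially the same route as the paper: both take an arbitrary point in the intersection, use the identity $\Vert\u-\z\Vert^2=(1-\lambda)\Vert\u-\x\Vert^2+\lambda\Vert\u-\y\Vert^2-\lambda(1-\lambda)\Vert\x-\y\Vert^2$ (the paper obtains it by forming the convex combination of the two ball-membership inequalities and ``rearranging,'' which is exactly your variance identity), and then bound termwise. The only notable difference is in the nonnegativity claim: the paper proves it first by a direct algebraic manipulation using $\rho+\sigma\ge\delta$ and AM--GM, whereas you deduce it more economically as an immediate consequence of the identity (any common point $\u$ gives $\xi^2\ge\Vert\u-\z\Vert^2\ge0$); your version is slightly slicker but otherwise the arguments coincide.
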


\begin{proof}
We prove the second claim first.
The quantity appearing in the square root of \eref{eq:xidef} is
nonnegative as the following inequalities show:
\begin{align*}
(1-\lambda)\rho^2 + \lambda\sigma^2 - (1-\lambda)\lambda\delta^2
&=(1-\lambda)\lambda(\rho^2 + \sigma^2 - \delta^2) +
(1-\lambda)^2\rho^2+\lambda^2\sigma^2 \\
&\ge (1-\lambda)\lambda(\rho^2 + \sigma^2 - \delta^2) +
2(1-\lambda)\lambda\rho\sigma \\
&= (1-\lambda)\lambda((\rho + \sigma)^2 - \delta^2) \\
&\ge 0,
\end{align*}
where the last line uses the assumptions $\rho+\sigma\ge\Vert\x-\y\Vert\ge \delta$.

Now for the first part of the lemma,
the proof that $B(\x,\rho)\cap B(\y,\sigma)\subset B(\z,\xi)$
for $\xi$ given by \eref{eq:xidef} follows from more general
analysis in Drusvyatskiy et al.~\cite{Drusvyatskiy}.  
Assume that $\p\in B(\x,\rho)\cap B(\y,\sigma)$ so
\begin{align}
(\p-\x)^T(\p-\x) -\rho^2 &\le 0, \label{eq:px1} \\
(\p-\y)^T(\p-\y) -\sigma^2 &\le 0. \label{eq:py1}
\end{align}
For $\lambda\in[0,1]$, add $(1-\lambda)$ times \eref{eq:px1} to
$\lambda$ times \eref{eq:py1} and rearrange to obtain a new inequality satisfied
by $\p$:
$$(\p-\z)^T(\p-\z)+(1-\lambda)\lambda\Vert\x-\y\Vert^2 -(1-\lambda)\rho^2-
\lambda\sigma^2\le 0,$$
i.e.
$$\Vert\p-\z\Vert\le \left((1-\lambda)\rho^2+\lambda\sigma^2-(1-\lambda)\lambda\Vert\x-\y\Vert^2\right)^{1/2},$$
where $\z$ is defined by \eref{eq:lemmaz}.  By substituting the definition
$\delta\le\Vert\x-\y\Vert$, we observe that $\p\in B(\z,\xi)$, where $\xi$ is defined
by \eref{eq:xidef}.
\end{proof}

This leads to the following, which is
a more precise statement of the geometric lemma from
Bubeck et al.\ \cite{bubeck}:

\begin{lemma}
Let $\x,\y,\rho,\sigma,\delta$ be as in the preceding lemma.
Under the assumption 
$\rho+\sigma\ge \delta$ and the
additional assumption
$\delta \ge \sqrt{|\rho^2-\sigma^2|}$, 
\eref{eq:xidef} is minimized 
over possible choices of $\lambda\in[0,1]$ by:
\begin{equation}
\lambda^* = \frac{\delta^2+\rho^2-\sigma^2}{2\delta^2},
\label{eq:lambdastar}
\end{equation}
yielding
\begin{equation}
\z^*=(1-\lambda^*)\x+\lambda^*\y,
\label{eq:zdef}
\end{equation}
in which case the minimum value of \eref{eq:xidef} is,
\begin{equation}
\xi^*=\frac{1}{2}\sqrt{2\rho^2+2\sigma^2-\delta^2-
\frac{(\rho^2-\sigma^2)^2}{\delta^2}}.
\label{eq:xistardef}
\end{equation}
\label{lem:ballintersect2}
\end{lemma}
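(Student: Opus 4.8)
The plan is to treat this as an elementary one-variable optimization of the quantity under the root, exploiting the fact that $t\mapsto\sqrt{t}$ is strictly increasing on $[0,\infty)$, so that minimizing $\xi$ in \eref{eq:xidef} over $\lambda\in[0,1]$ is equivalent to minimizing
$$g(\lambda)=(1-\lambda)\rho^2+\lambda\sigma^2-\lambda(1-\lambda)\delta^2.$$
Lemma~\ref{lem:ballintersect1} already guarantees $g(\lambda)\ge 0$ throughout $[0,1]$ under the hypothesis $\rho+\sigma\ge\delta$, so whatever minimizer we find, the resulting $\xi^*$ will be a real number.

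First I would expand $g$ as a quadratic in $\lambda$,
$$g(\lambda)=\delta^2\lambda^2+(\sigma^2-\rho^2-\delta^2)\lambda+\rho^2.$$
Since \eref{eq:lambdastar} is only meaningful for $\delta>0$, I take $\delta>0$; the leading coefficient $\delta^2$ is then positive, so $g$ is strictly convex and has a unique unconstrained minimizer at its vertex,
$$\lambda^*=-\frac{\sigma^2-\rho^2-\delta^2}{2\delta^2}=\frac{\delta^2+\rho^2-\sigma^2}{2\delta^2},$$
which is exactly \eref{eq:lambdastar}.

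The one point that genuinely requires the second hypothesis is confirming that $\lambda^*\in[0,1]$, so that the vertex is feasible and is therefore the constrained minimizer as well. I would verify the two bounds separately: $\lambda^*\ge 0$ is equivalent to $\delta^2\ge\sigma^2-\rho^2$, while $\lambda^*\le 1$ is equivalent to $\delta^2\ge\rho^2-\sigma^2$; taken together these read $\delta^2\ge|\rho^2-\sigma^2|$, which is precisely the assumption $\delta\ge\sqrt{|\rho^2-\sigma^2|}$. This feasibility check is the only place the extra hypothesis enters, and it is the crux of the argument, though it amounts to nothing more than splitting an absolute value into two cases.

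Finally, substituting $\lambda^*$ back into $g$ — equivalently using the vertex value $c-b^2/(4a)$ with $a=\delta^2$, $b=\sigma^2-\rho^2-\delta^2$, and $c=\rho^2$ — yields
$$g(\lambda^*)=\rho^2-\frac{(\sigma^2-\rho^2-\delta^2)^2}{4\delta^2}.$$
Expanding the squared numerator and collecting terms reduces this to $\tfrac14\bigl(2\rho^2+2\sigma^2-\delta^2-(\rho^2-\sigma^2)^2/\delta^2\bigr)$, and taking the square root gives \eref{eq:xistardef}. There is no real obstacle here: the whole lemma is a convex quadratic minimized at an interior vertex, and the only two things to be careful about are the feasibility verification above and the mechanical algebra of the final substitution.
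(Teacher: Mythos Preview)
Your proposal is correct and follows essentially the same approach as the paper: both argue that the expression under the square root is a convex quadratic in $\lambda$, identify the unconstrained minimizer as \eref{eq:lambdastar}, use the hypothesis $\delta\ge\sqrt{|\rho^2-\sigma^2|}$ to verify $\lambda^*\in[0,1]$, and substitute back to obtain \eref{eq:xistardef}. Your write-up is a bit more explicit about the algebra and the feasibility check, but there is no substantive difference in method.
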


\begin{proof}
First, note that
$|\rho^2-\sigma^2|/\delta^2\le 1$ by the assumption made, thus
ensuring that $\lambda^*\in[0,1]$.  
Therefore, it follows from the preceding lemma that
the quantity appearing in the square root of \eref{eq:xidef} is nonnegative.
The previous lemma establishes that for any $\p\in B(\x,\rho)\cap B(\y,\sigma)$,
and for an arbitrary $\lambda\in[0,1]$,
$$
\Vert\p-\z\Vert^2 \le (1-\lambda)\rho^2 + \lambda\sigma^2 - \lambda(1-\lambda)\delta^2.$$
We observe that the right-hand side is a convex quadratic in $\lambda$ and
hence is minimized when the derivative with respect to $\lambda$ is zero, and one
checks that this value is precisely \eref{eq:lambdastar}.  Substituting 
$\lambda=\lambda^*$ into \eref{eq:xidef} yields \eref{eq:xistardef}.
\end{proof}

\section{Idealized algorithm}
\label{sec:idealized}

We consider the following idealized algorithm for minimizing $f(\x)$, where
$f:\R^n\rightarrow\R$ is smooth, strongly convex.  As in the introduction,
let $\ell,L$ denote the two parameters of strong convexity.

\begin{align}
& \mbox{\bf Idealized Algorithm (IA)} \notag\\
& \x_0:=\mbox{arbitrary} \notag \\
& \mathcal{M}_1 := \x_0+\Span\{\nabla f(\x_0)\} \notag \\
&\mbox{for }k:=1,2,\ldots \notag  \\
&\hphantom{\mbox{for }} \x_{k}:=\argmin\{f(\x):\x\in \mathcal{M}_{k}\}\label{eq:ia2.xupd} \\
& \hphantom{\mbox{for }} \y_k := \argmin\{\Vert\y-\x^*\Vert:\y\in\mathcal{M}_{k}\}
\label{eq:ia2.yupd}\\
&\hphantom{\mbox{for }} \mathcal{M}_{k+1}:=\x_{k}+\Span\{\y_k-\x_k,\nabla f(\x_{k})\} \label{eq:ia2.mupd}\\
& \mbox{end} \notag
\end{align}

This algorithm is called ``idealized'' because it is not implementable 
in the general case;
it requires prior knowledge of $\x^*$ in 
\eref{eq:ia2.yupd}.  Nonetheless, we will argue
that CG, accelerated gradient, and geometric gradient are related
to the idealized algorithm in different ways. 

Notice that $\mathcal{M}_k$ is an affine set that is two-dimensional
on most iterations.
Alternate notation for this set, also used herein, is
$\mathcal{M}_{k}=\aff\{\x_{k-1},\y_{k-1},\x_{k-1}-\nabla f(\x_{k-1})\}$.
Note also that by \eref{eq:ia2.xupd} and \eref{eq:ia2.yupd},
$\x_k,\y_k\in\mathcal{M}_k$, and by \eref{eq:ia2.mupd},
$\x_k,\y_k\in\mathcal{M}_{k+1}$, and therefore $\mathcal{M}_k$,
$\mathcal{M}_{k+1}$ have a common 1-dimensional affine subspace.

We start with the main theorem about
IA.  For iteration $k$, define
a potential $\Psi_k$ as follows:
\begin{equation}
\Psi_k= \Vert \y_{k}-\x^*\Vert^2+\frac{2(f(\x_k)-f(\x^*))}{\ell}.
\label{eq:psidef}
\end{equation}

\begin{theorem}
For Algorithm IA, for each $k=1,2,\ldots$, 
$$\Psi_{k+1}\le \left(1-\sqrt{\frac{\ell}{L}}\right)\Psi_k.$$
\label{thm:iacvg}
\end{theorem}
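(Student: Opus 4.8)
The plan is to analyze the single transition $\mathcal{M}_k\to\mathcal{M}_{k+1}$ geometrically, spending two orthogonality relations that come for free from the definitions of $\x_k$ and $\y_k$. Since $\x_k$ minimizes $f$ on the affine set $\mathcal{M}_k$, first-order optimality gives $\nabla f(\x_k)\perp{\bf T}\mathcal{M}_k$; since $\y_k$ is the point of $\mathcal{M}_k$ closest to $\x^*$, the residual satisfies $(\x^*-\y_k)\perp{\bf T}\mathcal{M}_k$. As $\y_k-\x_k\in{\bf T}\mathcal{M}_k$, these specialize to $\nabla f(\x_k)^T(\y_k-\x_k)=0$ and $(\x^*-\y_k)^T(\y_k-\x_k)=0$. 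I would place $\x_k$ at the origin, take an orthonormal pair $\u,\v$ spanning ${\bf T}\mathcal{M}_{k+1}$ with $\u$ along $\y_k-\x_k$ and $\v$ along $\nabla f(\x_k)$, and decompose $\x^*=\sqrt{d}\,\u+\beta\v+\w$ with $\w\perp{\bf T}\mathcal{M}_{k+1}$ and $d=\Vert\y_k-\x_k\Vert^2$; the relation $(\x^*-\y_k)\perp(\y_k-\x_k)$ forces the $\u$-coordinate of $\x^*$ to equal that of $\y_k$, which is what puts $\sqrt d$ there.

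The key simplification is that the projection of $\x^*$ onto $\mathcal{M}_{k+1}$ is exactly $\sqrt d\,\u+\beta\v$, so the first term of the new potential collapses to $\Vert\y_{k+1}-\x^*\Vert^2=\Vert\w\Vert^2=\Vert\y_k-\x^*\Vert^2-\beta^2$: the whole gain is the single number $\beta^2$. Writing $h_k=2(f(\x_k)-f(\x^*))/\ell$ and $S=d+\Psi_k$, I would substitute $\x^*-\x_k$ into the strong-convexity lower bound of \eref{eq:strconvdef} at $\x_k$; using $\nabla f(\x_k)^T(\x^*-\x_k)=\Vert\nabla f(\x_k)\Vert\,\beta$ and $\Vert\x^*-\x_k\Vert^2=d+\Vert\y_k-\x^*\Vert^2$, this rearranges to $\Vert\nabla f(\x_k)\Vert\,|\beta|\ge\tfrac{\ell}{2}S$, hence $\beta^2\ge \ell^2 S^2/(4g)$ with $g=\Vert\nabla f(\x_k)\Vert^2$.

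For the second term I would use that $\x_k-\tfrac1L\nabla f(\x_k)\in\mathcal{M}_{k+1}$, so minimality of $\x_{k+1}$ together with the smoothness estimate (the right inequality of \eref{eq:strconvdef}) gives $h_{k+1}\le h_k-g/(\ell L)$. Adding the two bounds,
\[
\Psi_{k+1}\le \Psi_k-\Bigl(\frac{\ell^2 S^2}{4g}+\frac{g}{\ell L}\Bigr),
\]
and AM--GM on the bracketed quantity yields $\tfrac{\ell^2 S^2}{4g}+\tfrac{g}{\ell L}\ge S\sqrt{\ell/L}\ge\Psi_k\sqrt{\ell/L}$, the last step because $S=\Psi_k+d\ge\Psi_k$. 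This is exactly the claimed factor. Note that $g>0$ may be assumed (otherwise $\x_k=\x^*$ and $\Psi_k=0$), and that no upper bound on the gradient is required.

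I expect the main obstacle to be the orthogonality bookkeeping of the first two paragraphs: every orthogonality relation is consumed in producing the clean identity $\Vert\y_{k+1}-\x^*\Vert^2=\Vert\y_k-\x^*\Vert^2-\beta^2$ and the sign and size control of $\beta$, and any slip there destroys the final balance. The conceptual heart, by contrast, is painless once the setup is in place: the per-step decrease splits into a strong-convexity piece $\ell^2 S^2/(4g)$ and a smoothness piece $g/(\ell L)$ that are reciprocal in $g$, so their geometric mean supplies the square-root rate. This AM--GM balance plays the same role that minimizing \eref{eq:xidef} over $\lambda$ plays in Lemma~\ref{lem:ballintersect2}; indeed one could instead bound $\Vert\y_{k+1}-\x^*\Vert$ by applying that lemma to $B(\y_k,\Vert\y_k-\x^*\Vert)$ and the strong-convexity ball $B(\x_k-\tfrac1\ell\nabla f(\x_k),\sqrt{g/\ell^2-h_k})$, whose centers both lie in $\mathcal{M}_{k+1}$, giving a valid though generally looser bound on the same term.
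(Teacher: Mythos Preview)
Your argument is correct. The orthogonality bookkeeping checks out: with $\x_k$ at the origin and $\u,\v$ an orthonormal basis of ${\bf T}\mathcal{M}_{k+1}$ along $\y_k-\x_k$ and $\nabla f(\x_k)$ respectively, the condition $(\x^*-\y_k)\perp(\y_k-\x_k)$ forces the $\u$-coordinate of $\x^*$ to be $\sqrt{d}$, so the projection of $\x^*$ onto $\mathcal{M}_{k+1}$ is exactly $\sqrt{d}\,\u+\beta\v$, giving the clean identity $\Vert\y_{k+1}-\x^*\Vert^2=\Vert\y_k-\x^*\Vert^2-\beta^2$. Strong convexity at $\x_k$ then yields $|\beta|\,\Vert\nabla f(\x_k)\Vert\ge\tfrac{\ell}{2}S$, the descent step gives $h_{k+1}\le h_k-g/(\ell L)$, and AM--GM closes the argument. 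The degenerate cases ($\nabla f(\x_k)=\bz$ or $\y_k=\x_k$) are harmless, as you note.

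Your route is genuinely different from the paper's. The paper does not use the second orthogonality $(\x^*-\y_k)\perp{\bf T}\mathcal{M}_k$ to get an \emph{identity} for $\Vert\y_{k+1}-\x^*\Vert^2$; instead it invokes Lemma~\ref{lem:ballintersect2} (the Bubeck--Lee--Singh ball-intersection lemma) on the pair $B(\oobx_k,\rho_k)$, $B(\y_k,\sigma_k)$ with $\oobx_k=\x_k-\nabla f(\x_k)/\ell$, using only the first orthogonality to lower-bound the center distance by $\delta_k=\Vert\nabla f(\x_k)\Vert/\ell$. After simplification this produces $\Vert\y_{k+1}-\x^*\Vert^2\le\Vert\y_k-\x^*\Vert^2-\ell^2\Psi_k^2/(4g)$, which is your bound with $S$ replaced by the smaller quantity $\Psi_k$; the final AM--GM step is then identical. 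So your intermediate inequality is actually sharper (you retain the extra $d=\Vert\y_k-\x_k\Vert^2$ in $S$), though this gain is discarded at the last line. What the paper's approach buys is thematic unity: the same geometric lemma drives the analyses of GD, CG, and AG later in the paper, and the IA proof is meant to exhibit that template. What your approach buys is self-containment---no auxiliary lemma needed---and a transparent explanation of \emph{why} the rate is $\sqrt{\ell/L}$: the two pieces $\ell^2S^2/(4g)$ and $g/(\ell L)$ are reciprocal in $g$, so their sum is minimized at a value independent of $g$. You correctly observe in your last paragraph that the ball-intersection route is the natural alternative; that is precisely what the paper does.
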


\begin{proof}
The proof follows closely from the analysis in \cite{bubeck}.  Define
\begin{align*}
\obx_k&=\x_k-\nabla f(\x_k)/L, \\
\oobx_k&=\x_k-\nabla f(\x_k)/\ell.
\end{align*}
The point $\obx_k$
satisfies $f(\obx_k)\le f(\x_k)-\Vert\nabla f(\x_k)\Vert^2/(2L)$.
Observe that
$\obx_k\in\mathcal{M}_{k+1}$,
so $\obx_k$ is a candidate
for the optimizer in \eref{eq:ia2.xupd} on iteration $k+1$, and hence
\begin{equation}
f(\x_{k+1})\le f(\x_k)-\Vert\nabla f(\x_k)\Vert^2/(2L),
\label{eq:fdesc0}
\end{equation}
which is equivalent to
\begin{equation}
\frac{2(f(\x_{k+1})-f(\x^*))}{\ell}\le \frac{2(f(\x_k)-f(\x^*))}{\ell}-
\frac{\Vert\nabla f(\x_k)\Vert^2}{L\ell}.
\label{eq:fdesc}
\end{equation}

Next, observe that a rearrangement of the definition of strong convexity yields:
\begin{equation}
\frac{-2\nabla f(\x_k)^T(\x_k-\x^*)}{\ell}
+\Vert\x_k-\x^*\Vert^2
\le \frac{-2(f(\x_k)-f(\x^*))}{\ell}.
\label{eq:strcvx1}
\end{equation}
We use this result in the following:
\begin{align}
\Vert\oobx_k-\x^*\Vert^2 & = \Vert\oobx_k-\x_k+\x_k-\x^*\Vert^2 \notag \\
&= \Vert\oobx_k-\x_k\Vert^2 + 2(\oobx_k-\x_k)^T(\x_k-\x^*) + \Vert\x_k-\x^*\Vert^2 \notag\\
&=\frac{\Vert\nabla f(\x_k)\Vert^2}{\ell^2} -
\frac{2\nabla f(\x_k)^T(\x_k-\x^*)}{\ell} +\Vert\x_k-\x^*\Vert^2 \notag \\
&\le 
\frac{\Vert\nabla f(\x_k)\Vert^2}{\ell^2} 
-\frac{2(f(\x_k)-f(\x^*))}{\ell}\mbox{ (by \eref{eq:strcvx1})}\label{eq:rdef0} \\
&\equiv \rho_k^2,\label{eq:rdef}
\end{align}
where we introduced $\rho_k$ for the square root of the quantity in \eref{eq:rdef0}.
Thus, $\x^*\in B(\oobx_k,\rho_k)$.

Next, define
\begin{equation}
\sigma_k=\Vert\y_k-\x^*\Vert,
\label{eq:sdef}
\end{equation}
so that $\x^*\in B(\y_k,\sigma_k)$.

By the minimality
property of $\x_k$, we know that $\nabla f(\x_k)$ is orthogonal to 
${\bf T}\mathcal{M}_{k}$,
which contains $\x_k-\y_k$, i.e.,
\begin{equation}
\nabla f(\x_k)^T(\y_k-\x_k)=0.
\label{eq:gradorth}
\end{equation}
Thus,
\begin{align}
\Vert\y_k-\oobx_k\Vert 
&= \Vert(\y_k-\x_k)+(\x_k-\oobx_k)\Vert \notag \\
&= \Vert(\y_k-\x_k)+\nabla f(\x_k)/\ell\Vert \notag \\
&= \sqrt{\Vert\y_k-\x_k\Vert^2+\Vert\nabla f(\x_k)/\ell\Vert^2} \mbox{ (by Pythagoras's theorem)}\notag \\
&\ge  \Vert\nabla f(\x_k)\Vert/\ell, \label{eq:ykoobx}
\end{align}
so define
\begin{equation}
\delta_k =  \Vert\nabla f(\x_k)\Vert/\ell,
\label{eq:ddef}
\end{equation}
to conclude that $\Vert\y_k-\oobx_k\Vert\ge \delta_k$.  
We have defined $\delta_k,\rho_k,\sigma_k$ as in
Lemma~\ref{lem:ballintersect2}.
We need to confirm the inequality
$\rho_k+\sigma_k\ge \delta_k$:
\begin{align*}
\rho_k+\sigma_k&\ge \Vert\oobx_k-\x^*\Vert + \Vert \y_k-\x^*\Vert \\
& \ge  \Vert\oobx_k-\y_k\Vert \\
&\ge \delta_k.
\end{align*}
The other inequality is derived as follows. First,
$\rho_k\le \delta_k$ since $\delta_k^2$ is the first term in \eref{eq:rdef0}.
Also, $\sigma_k\le \delta_k$ since
\begin{align*}
\sigma_k^2 &= \Vert \y_k-\x^*\Vert^2 \\
&\le \Vert \x_k-\x^*\Vert^2 && \mbox{(by the optimality of $\y_k$)} \\
&\le \frac{2(f(\x_k)-f(\x^*))}{\ell} && \mbox{(by strong convexity)} \\
&\le \delta_k^2 && \mbox{(since \eref{eq:rdef0} is nonnegative)}.
\end{align*}
Thus, $\delta_k\ge\max(\rho_k,\sigma_k)$ so $\delta_k^2\ge |\rho_k^2-\sigma_k^2|$.

Therefore, we can conclude from  Lemma~\ref{lem:ballintersect2} that
there exists a $\z_k^*\in\aff\{\oobx_k,\y_k\}$ (and hence in 
$\mathcal{M}_{k+1}$)
such that 
\begin{equation}
\Vert\z_k^*-\x^*\Vert\le \xi_k^*,
\label{eq:zbound}
\end{equation}
where $\xi_k^*$ is defined by \eref{eq:xistardef} for $\rho_k,\sigma_k,\delta_k$ 
given by \eref{eq:rdef},
\eref{eq:sdef} and \eref{eq:ddef} respectively.  After some simplification
and cancellation of \eref{eq:xistardef}, one arrives at:
\begin{equation}
(\xi_k^*)^2=\Vert\y_k-\x^*\Vert^2 - \left(\frac{f(\x_k)-f(\x^*)+\Vert\y_k-\x^*\Vert^2\cdot\ell/2}
{\Vert\nabla f(\x_k)\Vert}\right)^2.
\label{eq:pkdef}
\end{equation}
Since $\y_{k+1}$ is the optimizer of \eref{eq:ia2.yupd}, 
$\y_{k+1}$ is
at least as close to $\x^*$ as $\z_k^*$, and hence, 
$$\Vert\y_{k+1}-\x^*\Vert^2\le 
\Vert\y_k-\x^*\Vert^2 - \left(\frac{f(\x_k)-f(\x^*)+\Vert\y_k-\x^*\Vert^2\cdot\ell/2}
{\Vert\nabla f(\x_k)\Vert}\right)^2.$$
Adding this inequality to \eref{eq:fdesc} yields:
\begin{align*}
\Psi_{k+1}&=\Vert\y_{k+1}-\x^*\Vert^2 + \frac{2(f(\x_{k+1})-f(\x^*))}{\ell} \\
&\le \Vert\y_k-\x^*\Vert^2 - \left(\frac{f(\x_k)-f(\x^*)+\Vert\y_k-\x^*\Vert^2\cdot\ell/2}
{\Vert\nabla f(\x_k)\Vert}\right)^2 + \frac{2(f(\x_k)-f(\x^*))}{\ell}-
\frac{\Vert\nabla f(\x_k)\Vert^2}{L\ell} \\
& \le
\Vert\y_k-\x^*\Vert^2 -\frac{2\left[f(\x_k)-f(\x^*)+\Vert\y_k-\x^*\Vert^2\cdot\ell/2\right]}{\sqrt{L\ell}}
+ \frac{2(f(\x_k)-f(\x^*))}{\ell}  \\
&=
\left[\Vert\y_k-\x^*\Vert^2+\frac{2(f(\x_k)-f(\x^*))}{\ell}\right]
\cdot\left(1-\sqrt{\frac{\ell}{L}}\right) \\
&=
\Psi_k
\cdot\left(1-\sqrt{\frac{\ell}{L}}\right).
\end{align*}
The third line was obtained by applying the inequality
 $a^2+b^2\ge 2ab$ to the second and fourth terms of the second line.
\end{proof}

\section{Analysis of the geometric descent algorithm}
\label{sec:GDanalysis}

In this section we present the
geometric descent (GD) algorithm due to
\cite{bubeck} and
an analysis of it. 
Our analysis varies slightly from the proof
due to \cite{bubeck}; in their proof, the
potential involves the term $2(f(\obx_k)-f(\x^*))/\ell$
rather than $2(f(\x_k)-f(\x^*))/\ell$. 
The reason for
the change is to unify the analysis with the other algorithms
considered in order for the NCG construction in Section~\ref{sec:hybrid}
to be applicable.

\begin{align}
& \mbox{\bf Geometric Descent} \notag\\
& \x_0:=\mbox{arbitrary} \notag \\
& \y_0:=\x_0 \notag\\
&\mbox{for }k:=1,2,\ldots \notag  \\
& \hphantom{\mbox{for }} {\obx}_{k-1} : =\x_{k-1}-\nabla f(\x_{k-1})/L \label{eq:gd.steep} \\
& \hphantom{\mbox{for }} {\oobx}_{k-1} : =\x_{k-1}-\nabla f(\x_{k-1})/\ell \notag \\
&\hphantom{\mbox{for }} \mbox{Determine $\lambda_k$ according to
\eref{eq:gd.lambda1} or \eref{eq:gd.lambda2} below.} \notag \\
&\hphantom{\mbox{for }} \y_k:=(1-\lambda_k)\oobx_{k-1} + \lambda_k\y_{k-1} \label{eq:gd.yupd} \\
&\hphantom{\mbox{for }} \x_k:=\argmin\{f(\x):\x\in \aff\{\obx_{k-1},\y_k\}\}\label{eq:gd.xupd} \\
& \mbox{end} \notag
\end{align}

Note: The operation in \eref{eq:gd.xupd} is a line search and
requires an inner iteration to find the optimal $\x$ in the specified line.

This algorithm (which is one of several variants
of GD presented by the authors) is derived in \cite{bubeck}.
It can be regarded as
extracting the essential properties
of $\x_k$ and $\y_k$ used in the proof of Theorem~\ref{thm:iacvg} to
obtain an implementable algorithm.
Indeed, the authors use that proof that we presented
in Section~\ref{sec:idealized} to analyze GD rather 
IA.  A more precise statement of the relationship between
GD and IA is provided in the next section.

Intuitively, the proof in the previous
section shows that $\x_k$ need not be the minimizer
in \eref{eq:ia2.xupd}; it suffices for $\x_k$ to satisfy the two
properties \eref{eq:fdesc0} and \eref{eq:gradorth}.  The Geometric
Descent algorithm satisfies these two properties with a ``dogleg'' step
in \eref{eq:gd.xupd}
that combines a gradient step with a step toward $\y_k$.
Property \eref{eq:fdesc0} is satisfied because $f(\x_k)\le f(\obx_{k-1})$,
and property \eref{eq:gradorth} is satisfied because of the minimality of
$\x_k$ with respect to $\aff\{\obx_{k-1},\y_k\}.$

The proof also shows that it suffices to take a $\y_{k+1}$ that
satisfies the inequality for $\z_k^*$ in 
\eref{eq:zbound} rather than solving \eref{eq:ia2.yupd}.

We now turn to the computation of $\lambda_k$ and the
associated issues with the radii $\rho_k,\sigma_k$.
Recall that $\rho_k^{\rm IA}$ from \eref{eq:rdef0}
and $\sigma_k^{\rm IA}$ from \eref{eq:sdef} both involve
$\x^*$ and hence are unimplementable.  The difficulty with
$\sigma_k$ is straightforward to resolve: define $\sigma_k$
to be an upper on $\Vert\y_k-\x^*\Vert$ rather than its exact value,
and ensure inductively that $\sigma_{k+1}$ is an upper
bound on $\Vert\y_{k+1}-\x^*\Vert$.

The difficulty with \eref{eq:rdef0} is resolved using 
offsets denoted by $\gamma_k$, 
a clever  device from \cite{bubeck}.  As in \eref{eq:rdef0} and \eref{eq:rdef},
\begin{align}
\Vert\oobx_k-\x^*\Vert^2 
&\le 
\frac{\Vert\nabla f(\x_k)\Vert^2}{\ell^2} 
-\frac{2(f(\x_k)-f(\x^*))}{\ell}\mbox{ (by \eref{eq:strcvx1})}\label{eq:gd.rdef0} \\
&\equiv \rho_k^2,\label{eq:gd.rdef} \\
& \equiv \tilde\rho_k^2 - \gamma_k \label{eq:gd.rdef2}
\end{align}
where
\begin{align}
\tilde\rho_k &= \frac{\Vert\nabla f(\x_k)\Vert}{\ell}, \label{eq:rpdef} \\
\gamma_k &= \frac{2(f(\x_k)-f(\x^*))}{\ell}. \label{eq:gammakdef}
\end{align}
Let  $\sigma_0,\sigma_1,\ldots,$ be a sequence of positive scalars 
such that $\sigma_k\ge \Vert\y_k-\x^*\Vert$ for all $k=0,1,\ldots$,
and suppose that
\begin{equation}
\tilde\sigma_k = (\sigma_k^2 + \gamma_k)^{1/2}.
\label{eq:sigmakprime}
\end{equation}
Thus, we have the relationships:
\begin{align*}
\tilde\sigma_k^2 &= \sigma_k^2 + \gamma_k, \\
\tilde\rho_k^2 &= \rho_k^2 + \gamma_k.
\end{align*}
Note that $\tilde\rho_k$ is easily computable on the $k$th iteration, while
$\tilde\sigma_k$ can be updated recursively.  The rationale of these definitions is
as follows. From \eref{eq:lambdastar}, one sees that
if $\sigma^2$ and $\rho^2$ are both incremented by the same constant
additive term $\gamma_k$, 
then $\lambda^*$ is unaffected.  Also, it follows from this observation
and from \eref{eq:xistardef} that if $\sigma^2$ and $\rho^2$ are both incremented
by $\gamma_k$, then $(\xi^*)^2$ is also incremented by $\gamma_k$.  Thus,
the GD algorithm works throughout with radii whose squares are incremented by $\gamma_k$.
This increment $\gamma_k$ changes from one iteration to the next and hence must
be adjusted at the start of each iteration (see \eref{eq:sigmadef} below).

In more detail, the sequence of computations is as follows.
We initialize the algorithm by:
\begin{equation}
  \sigma_0:=\sqrt{2}\Vert\nabla f(\x_0)\Vert/\ell. \label{eq:sigma0}
\end{equation}
This initialization is carried out prior to the main loop of GD.
The rationale for this formula is provided in the proof of
\eref{eq:sigma-ub} below.

Assuming inductively that $\tilde\sigma_{k-1}$ is already known, 
compute as follows:
\begin{align}
& \tilde\rho_{k-1} := \frac{\Vert\nabla f(\x_{k-1})\Vert}{\ell}
\label{eq:gd.rpdef} \\
&\mbox{if $\tilde\sigma_{k-1}^2\le 2\tilde\rho_{k-1}^2$} \label{eq:gd.ifstmt} \\
& \hphantom{\mbox{if }}\delta_{k-1} := \Vert\y_{k-1}-\oobx_{k-1}\Vert, \label{eq:gd.deltadef} \\
& \hphantom{\mbox{if }}\lambda_k := 
\frac{\delta_{k-1}^2+\tilde\rho_{k-1}^2-\tilde\sigma_{k-1}^2}{2\delta_{k-1}^2}, 
&&\mbox{(as in \eref{eq:lambdastar})} 
\label{eq:gd.lambda1}\\
& \hphantom{\mbox{if }}\tilde\xi^*_k :=
\frac{1}{2}\sqrt{2\tilde\rho_{k-1}^2+2\tilde\sigma_{k-1}^2-\delta_{k-1}^2-
\frac{(\tilde\rho_{k-1}^2-\tilde\sigma_{k-1}^2)^2}{\delta_{k-1}^2}},
&&\mbox{(as in \eref{eq:xistardef})} \label{eq:gd.xik} \\
&\hphantom{\mbox{if }} \tilde\sigma_k := \sqrt{(\tilde\xi^*_k)^2-\gamma_{k-1}+\gamma_{k}} \label{eq:sigmadef} \\
& \mbox{else} \\
& \hphantom{\mbox{if }}\lambda_k := 0 \label{eq:gd.lambda2}\\
& \hphantom{\mbox{if }}\tilde\sigma_k := \sqrt{\tilde\rho_{k-1}^2 - \gamma_{k-1}+\gamma_{k}}.
\label{eq:sigmadef2} 
\end{align}
Although computation of $\gamma_k$ alone requires prior knowledge of $\x^*$, 
the difference $\gamma_{k}-\gamma_{k-1}$ appearing
in \eref{eq:sigmadef} and \eref{eq:sigmadef2} does not, as is
evident from \eref{eq:gammakdef}.  This is the motivation for
using $\tilde\sigma_k$ and $\tilde\rho_k$ in the computation instead
of $\sigma_k,\rho_k$.
In the theorems below
it is confirmed that the square
roots in \eref{eq:sigmadef} and \eref{eq:sigmadef2} take nonnegative
arguments.

The convergence of the GD algorithm is proved via
two theorems, which are both variants of
theorems due to \cite{bubeck}.

Before stating and proving the two theorems, we establish two
inequalities.  
As noted earlier, \eref{eq:gradorth} holds for GD, and
hence so does \eref{eq:ykoobx}.
Then it follows from \eref{eq:ykoobx}
combined with \eref{eq:gd.rpdef}, \eref{eq:gd.deltadef} that
\begin{equation}
\tilde\rho_{k-1}= \frac{\Vert \nabla f(\x_{k-1})\Vert}{\ell} \le \delta_{k-1}.
\label{eq:gd.deltarhobd}
\end{equation}

Next, regarding $\gamma_{k}-\gamma_{k-1}$ appearing in
\eref{eq:sigmadef} and \eref{eq:sigmadef2}, observe
\begin{align}
\gamma_{k}-\gamma_{k-1} & = \frac{2(f(\x_{k})-f(\x_{k-1}))}{\ell} \notag\\
& \le \frac{2(f(\obx_{k-1})-f(\x_{k-1}))}{\ell}\notag\\
&\le -\frac{\Vert\nabla f(\x_{k-1})\Vert^2}{L\ell} \label{eq:gammadiff}
\end{align}
where the second line follows because $f(\x_{k})\le f(\obx_{k-1})$ by
\eref{eq:gd.xupd} while the third follows by \eref{eq:fdesc0}.

\begin{theorem}
For all $k=1,\ldots,$
\begin{equation}
\tilde\sigma_k^2 \ge \Vert\y_{k}-\x^*\Vert^2 + \frac{2(f(\x_k)-f(\x^*))}{\ell}.
\label{eq:sigma-ub}
\end{equation}
\end{theorem}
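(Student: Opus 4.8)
The plan is to prove \eref{eq:sigma-ub} by induction on $k$, using the offset relation \eref{eq:sigmakprime} to reduce the claim to a statement about the ordinary (unshifted) radii. Since $\tilde\sigma_k^2=\sigma_k^2+\gamma_k$ with $\gamma_k=2(f(\x_k)-f(\x^*))/\ell$, inequality \eref{eq:sigma-ub} is \emph{equivalent} to $\sigma_k^2\ge\Vert\y_k-\x^*\Vert^2$; the whole theorem thus amounts to showing that $\sigma_k$ is a genuine upper bound on $\Vert\y_k-\x^*\Vert$, with the caveat that $\sigma_k$ is never formed explicitly but only through its computable shift $\tilde\sigma_k$. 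I would carry this equivalent statement as the running inductive hypothesis, taking $k=0$ as the base case (even though the theorem is stated from $k=1$, the base anchors the first step).

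For the base case I would use $\y_0=\x_0$ together with strong convexity and the nonnegativity of the right-hand side of \eref{eq:gd.rdef0}. The lower bound in \eref{eq:strconvdef} applied at $\x=\x^*,\y=\x_0$ with $\nabla f(\x^*)=\bz$ gives $\Vert\x_0-\x^*\Vert^2\le 2(f(\x_0)-f(\x^*))/\ell$, while nonnegativity of \eref{eq:gd.rdef0} at $k=0$ gives $2(f(\x_0)-f(\x^*))/\ell\le\Vert\nabla f(\x_0)\Vert^2/\ell^2$. Chaining these and recalling $\sigma_0^2=2\Vert\nabla f(\x_0)\Vert^2/\ell^2$ from \eref{eq:sigma0} yields $\Vert\x_0-\x^*\Vert^2\le\sigma_0^2$, in fact with a factor of two to spare, which explains the $\sqrt2$ appearing in \eref{eq:sigma0}.

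For the inductive step, assume $\sigma_{k-1}^2\ge\Vert\y_{k-1}-\x^*\Vert^2$, so that $\x^*$ lies in both balls $B(\oobx_{k-1},\rho_{k-1})$ (by \eref{eq:gd.rdef0}) and $B(\y_{k-1},\sigma_{k-1})$ (by hypothesis), where $\rho_{k-1}^2=\tilde\rho_{k-1}^2-\gamma_{k-1}$ and $\sigma_{k-1}^2=\tilde\sigma_{k-1}^2-\gamma_{k-1}$. In the else branch \eref{eq:gd.lambda2} we have $\lambda_k=0$, hence $\y_k=\oobx_{k-1}$, and the claim is immediate: $\Vert\y_k-\x^*\Vert^2\le\rho_{k-1}^2=\tilde\rho_{k-1}^2-\gamma_{k-1}$, which with \eref{eq:sigmadef2} gives exactly $\tilde\sigma_k^2\ge\Vert\y_k-\x^*\Vert^2+\gamma_k$. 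In the if branch I would invoke the geometric lemmas applied to the \emph{unshifted} triple $\rho_{k-1},\sigma_{k-1},\delta_{k-1}$. First I would verify their hypotheses: $\rho_{k-1}+\sigma_{k-1}\ge\delta_{k-1}$ follows from the triangle inequality through $\x^*$, while $\delta_{k-1}^2\ge|\rho_{k-1}^2-\sigma_{k-1}^2|$ splits into $\rho_{k-1}^2-\sigma_{k-1}^2\le\tilde\rho_{k-1}^2\le\delta_{k-1}^2$ (using $\tilde\sigma_{k-1}^2\ge0$ and \eref{eq:gd.deltarhobd}) and $\sigma_{k-1}^2-\rho_{k-1}^2=\tilde\sigma_{k-1}^2-\tilde\rho_{k-1}^2\le\tilde\rho_{k-1}^2\le\delta_{k-1}^2$, the latter being exactly where the if-test $\tilde\sigma_{k-1}^2\le2\tilde\rho_{k-1}^2$ of \eref{eq:gd.ifstmt} is consumed. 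Applying Lemma~\ref{lem:ballintersect1} with $\lambda=\lambda_k$ (noting that $\lambda_k$ is the common minimizer \eref{eq:lambdastar} for both the shifted and unshifted radii, since a common additive shift of $\rho^2,\sigma^2$ leaves \eref{eq:lambdastar} unchanged) bounds $\Vert\y_k-\x^*\Vert^2$ by the unshifted form of \eref{eq:xidef}, squared, at $\lambda_k$. This expression and the defining formula \eref{eq:gd.xik} for $(\tilde\xi^*_k)^2$ differ only by replacing $\rho^2,\sigma^2$ with $\tilde\rho^2,\tilde\sigma^2$, a net upward shift of $(1-\lambda_k)\gamma_{k-1}+\lambda_k\gamma_{k-1}=\gamma_{k-1}$; hence $\Vert\y_k-\x^*\Vert^2\le(\tilde\xi^*_k)^2-\gamma_{k-1}$, and \eref{eq:sigmadef} then gives $\tilde\sigma_k^2=(\tilde\xi^*_k)^2-\gamma_{k-1}+\gamma_k\ge\Vert\y_k-\x^*\Vert^2+\gamma_k$, closing the induction. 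Nonnegativity of the square-root arguments in \eref{eq:sigmadef} and \eref{eq:sigmadef2} comes for free, since each equals a nonnegative term plus $\gamma_k\ge0$.

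The main obstacle I anticipate is the offset bookkeeping rather than any single estimate: one must keep straight that the algorithm manipulates the computable shifted radii $\tilde\rho_{k-1},\tilde\sigma_{k-1}$, whereas the geometric lemmas must be applied to the unshifted $\rho_{k-1},\sigma_{k-1}$ that actually bound distances to $\x^*$, and one must check that the common shift $\gamma_{k-1}$ passes cleanly through both \eref{eq:lambdastar} and \eref{eq:xistardef}. The one genuinely delicate inequality is $\delta_{k-1}^2\ge|\rho_{k-1}^2-\sigma_{k-1}^2|$, whose harder half is precisely what the case split \eref{eq:gd.ifstmt} is engineered to guarantee; in the else branch that inequality can fail, which is exactly why the algorithm there abandons the two-ball construction and simply sets $\y_k=\oobx_{k-1}$.
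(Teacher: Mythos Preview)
Your proposal is correct and follows essentially the same approach as the paper: an induction on $k$ with the same two-case split on \eref{eq:gd.ifstmt}, applying the ball-intersection lemma to the unshifted radii $\rho_{k-1},\sigma_{k-1}$ in the main case and tracking the common offset $\gamma_{k-1}$ through \eref{eq:lambdastar} and \eref{eq:xistardef} exactly as you describe. The only inessential differences are that your base case chains two separate strong-convexity inequalities where the paper uses Cauchy--Schwarz directly on $-\frac{2}{\ell}\nabla f(\x_0)^T(\x^*-\x_0)$, and that you invoke Lemma~\ref{lem:ballintersect1} at the specific value $\lambda=\lambda_k$ rather than routing through Lemma~\ref{lem:ballintersect2}.
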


\begin{proof}
Note that the statement of the theorem may be equivalently written,
$$\tilde\sigma_k^2 \ge \Vert\y_{k}-\x^*\Vert^2 + \gamma_{k}.$$

The proof is by induction.  
The base case is that $\tilde\sigma_0^2\ge \Vert\y_0-\x^*\Vert^2+\gamma_0$.
Both terms of the right-hand side may be bounded by noting that strong convexity applied
to the two points $\x_0,\x^*$ and rearranged may be written:
$$-\frac{2}{\ell}\nabla f(\x_0)^T(\x^*-\x_0)\ge \Vert \x_0-\x^*\Vert^2 +\gamma_0.$$
Again by strong convexity, $\Vert\x_0-\x^*\Vert\le \Vert \nabla f(\x_0)\Vert/\ell$, so
we can apply this inequality and the Cauchy-Schwarz inequality on the left-hand
side to obtain
$$\frac{2\Vert\nabla f(\x_0)\Vert^2}{\ell^2}\ge  \Vert\x_0-\x^*\Vert^2 +\gamma_0.$$
Thus, the bound
$\tilde\sigma_0^2\ge\Vert\y_0-\x^*\Vert^2+\gamma_0$ is assured
by \eref{eq:sigma0}.

For the induction case, assume $k\ge 1$ and
the induction hypothesis
$$\tilde\sigma_{k-1}^2 \ge \Vert\y_{k-1}-\x^*\Vert^2 + \gamma_{k-1}.$$
There are two possibilities depending on the ``if''-statement 
\eref{eq:gd.ifstmt}.  First, suppose the condition
of \eref{eq:gd.ifstmt} holds, which may
be rewritten as
$\tilde\sigma_{k-1}^2-\tilde\rho_{k-1}^2 \le \tilde\rho_{k-1}^2$.
By \eref{eq:gd.deltarhobd}, this implies  $\delta_{k-1}^2\ge \tilde\sigma_{k-1}^2-\tilde\rho_{k-1}^2$,
and we already know from \eref{eq:gd.deltarhobd} that
$\delta_{k-1}^2\ge\tilde\rho_{k-1}^2\ge \tilde\rho_{k-1}^2-\tilde\sigma_{k-1}^2$.  
The conclusion from all these inequalities is
\begin{equation}
\delta_{k-1}^2\ge\tilde\rho_{k-1}^2\ge |\tilde\rho_{k-1}^2-\tilde\sigma_{k-1}^2|.
\label{eq:gd.chain1}
\end{equation}
Referring now to Lemma~\ref{lem:ballintersect2}, make the following identifications:
\begin{align*}
\x &= \oobx_{k-1},\\
\y &= \y_{k-1}, \\
\rho^2 & = \tilde\rho_{k-1}^2 - \gamma_{k-1}, \\
\sigma^2 &= \tilde\sigma_{k-1}^2 - \gamma_{k-1}, \\
\delta &= \delta_{k-1},
\end{align*}
in order to apply the lemma.  The condition $\rho+\sigma\ge \delta$
follows immediately since we already have assumed by induction
that $\x^*\in B(\y,\sigma)$ and it follows from \eref{eq:gd.rdef}
that $\x^*\in B(\x,\rho)$, thus implying that
$B(\x,\rho)\cap B(\y,\sigma)\ne\emptyset$.  The condition
$\delta^2\ge\rho^2-\sigma^2$ follows because $\delta_{k-1}\ge\tilde\rho_{k-1}$
as in \eref{eq:gd.chain1}.  The condition 
$\delta^2\ge \sigma^2-\rho^2$ follows because
$\delta_k^2\ge \tilde\sigma_{k-1}^2-\tilde\rho_{k-1}^2$ (as established in
\eref{eq:gd.chain1}).
Therefore, by the lemma,
if we define $\lambda^*$ 
$$\lambda^*=\frac{\delta^2+\rho^2-\sigma^2}{2\delta^2}
=\frac{\delta_{k-1}^2 +\tilde\rho_{k-1}^2 
-\tilde\sigma_{k-1}^2}{2\delta_{k-1}^2},$$
i.e., the formula for $\lambda_k$ in \eref{eq:gd.lambda1}
(notice that the two terms $\gamma_{k-1}$ cancel),
and we define $\y_k$ as in \eref{eq:gd.yupd}, 
then
\begin{equation}
\x^*\in B(\y_k,\xi)
\label{eq:gd.xi1}
\end{equation}
where
\begin{align*}
\xi^2 &= \frac{1}{4}\left(2\rho^2 + 2\sigma^2 - \delta^2 -\frac{1}{\delta^2}
(\rho^2-\sigma^2)^2\right) \\
& = 
\frac{1}{4}
\left(
2\tilde\rho_{k-1}^2 
+2\tilde\sigma_{k-1}^2 - 4\gamma_{k-1} - \delta_{k-1}^2 
-\frac{1}{\delta_{k-1}^2}
\left(
\tilde\rho_{k-1}^2 - \tilde\sigma_{k-1}^2\right)^2\right) \\
&=
(\tilde\xi_k^*)^2 - \gamma_{k-1} &&\mbox{(by \eref{eq:gd.xik})} \\
& =
\tilde\sigma_k^2 - \gamma_k, &&\mbox{(by \eref{eq:sigmadef})}.
\end{align*}
This establishes
the theorem in the first case.

If the condition in \eref{eq:gd.ifstmt} fails, then
$\lambda_k=0$ as in \eref{eq:gd.lambda2}, implying from
\eref{eq:gd.yupd} that $\y_k=\oobx_{k-1}$.
Then $\Vert\y_k-\x^*\Vert^2\le \tilde\rho_{k-1}^2-\gamma_{k-1}$
by 
\eref{eq:gd.rdef2}, implying by \eref{eq:sigmadef2}
that $\Vert\y_k-\x^*\Vert^2 \le \tilde\sigma_k^2-\gamma_k$,
thus establishing the theorem in the second case.
\end{proof}

\begin{theorem}
For each $k=1,2,\ldots,$
\begin{equation}
\tilde\sigma_k^2 \le \left(1-\sqrt{\frac{\ell}{L}}\right)\tilde\sigma_{k-1}^2.
\label{eq:sigma-decr}
\end{equation}
\end{theorem}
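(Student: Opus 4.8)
The plan is to verify the inequality in each of the two branches of the if-statement \eref{eq:gd.ifstmt} separately, since $\tilde\sigma_k^2$ is defined by a different formula in each. The two ingredients I would use throughout are the lower bound $\delta_{k-1}\ge\tilde\rho_{k-1}$ from \eref{eq:gd.deltarhobd} and the function-value gain $\gamma_k-\gamma_{k-1}\le -\Vert\nabla f(\x_{k-1})\Vert^2/(L\ell)=-(\ell/L)\tilde\rho_{k-1}^2$, which follows from \eref{eq:gammadiff} together with \eref{eq:rpdef}. To keep the algebra readable I would abbreviate $r=\tilde\rho_{k-1}^2$, $s=\tilde\sigma_{k-1}^2$, $d=\delta_{k-1}^2$, so that $d\ge r$, and the target becomes $\tilde\sigma_k^2\le(1-\sqrt{\ell/L})\,s$. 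The degenerate case $\nabla f(\x_{k-1})=\bz$ (whence $\x_{k-1}=\x^*$ and there is nothing left to prove) should be dispatched first, so that $r>0$ and no division by zero arises.

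In the first branch, where $s\le 2r$, equations \eref{eq:gd.xik} and \eref{eq:sigmadef} give
$$\tilde\sigma_k^2=\tfrac14\bigl(2r+2s-d-(r-s)^2/d\bigr)+(\gamma_k-\gamma_{k-1}).$$
The crucial observation is that the bracketed quantity, as a function of $d$, has derivative $-1+(r-s)^2/d^2$, which is nonpositive as soon as $d\ge|r-s|$; the branch hypothesis $s\le 2r$ forces $|r-s|\le r\le d$, so the bracket is decreasing over the admissible range and is therefore largest at the smallest admissible value $d=r$. Substituting $d=r$ collapses the bracket to $4s-s^2/r$, giving $\tilde\sigma_k^2\le s-s^2/(4r)-(\ell/L)r$. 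I would finish by applying the arithmetic-geometric-mean inequality to the two subtracted terms, $s^2/(4r)+(\ell/L)r\ge 2\sqrt{(s^2/(4r))\,(\ell r/L)}=s\sqrt{\ell/L}$, which yields exactly $\tilde\sigma_k^2\le s-s\sqrt{\ell/L}$. This is the direct analogue of the $a^2+b^2\ge 2ab$ step that closes the proof of Theorem~\ref{thm:iacvg}, and it is where the square-root factor $\sqrt{\ell/L}$ is manufactured from the two first-order quantities.

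In the second branch, where $s>2r$, equation \eref{eq:sigmadef2} gives $\tilde\sigma_k^2=r+(\gamma_k-\gamma_{k-1})\le r(1-\ell/L)$, and the branch condition $r<s/2$ then bounds this by $\tfrac12 s(1-\ell/L)$; I would conclude using the elementary inequality $\tfrac12(1-t^2)\le 1-t$ for $t=\sqrt{\ell/L}\in(0,1]$, which rearranges to $1+t\le 2$. I expect the first branch to be the main obstacle: one must recognise that $(\tilde\xi_k^*)^2$ is monotone in $\delta_{k-1}$ and that the worst case is exactly the bound $\delta_{k-1}=\tilde\rho_{k-1}$ furnished by \eref{eq:gd.deltarhobd}. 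Without this reduction the expression stays genuinely three-variable in $r,s,d$ and the clean AM-GM endgame is hidden; with it, both branches reduce to one-line elementary inequalities.
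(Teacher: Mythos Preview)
Your proposal is correct and follows essentially the same route as the paper's proof: the paper also splits on the if-statement \eref{eq:gd.ifstmt}, observes (via unimodality of $x\mapsto x+C/x$, equivalent to your derivative computation) that $(\tilde\xi_k^*)^2$ is maximized at the endpoint $\delta_{k-1}^2=\tilde\rho_{k-1}^2$, simplifies to $s-s^2/(4r)$, and finishes with $a^2+b^2\ge 2ab$ (your AM--GM) in the first branch, while the second branch is the same one-line comparison using $(1-\eps)/2\le 1-\sqrt{\eps}$.
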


\begin{proof}
We again take two cases depending on whether
the condition in \eref{eq:gd.ifstmt} holds.
If it holds, then 
\eref{eq:gd.chain1} in the preceding proof
holds.  
Observe that the function $x\mapsto x+C/x$ for $C>0$ is unimodal on 
$(0,\infty)$  with a minimizer at $\sqrt{C}$, which means that if
all the other parameters are fixed, the maximizing choice for $\delta_{k-1}^2$
in \eref{eq:gd.xik} is $\delta_*^2=|\tilde\rho_{k-1}^2-\tilde\sigma_{k-1}^2|$.
Therefore, by unimodality combined with the ordering
$\delta_{k-1}\ge\tilde\rho_{k-1}\ge\delta_*$ 
(which is \eref{eq:gd.chain1}),
the right-hand side of \eref{eq:gd.xik} can only
increase if we replace $\delta_{k-1}$ by $\tilde\rho_{k-1}$, thus obtaining,
\begin{align*}
(\tilde\xi^*_k)^2
&=
\frac{1}{4}\left(2\tilde\rho_{k-1}^2+2\tilde\sigma_{k-1}^2-\delta_{k-1}^2-
\frac{(\tilde\rho_{k-1}^2-\tilde\sigma_{k-1}^2)^2}{\delta_{k-1}^2}\right) \\
&\le
\frac{1}{4}\left(2\tilde\rho_{k-1}^2+2\tilde\sigma_{k-1}^2-\tilde\rho_{k-1}^2-
\frac{(\tilde\rho_{k-1}^2-\tilde\sigma_{k-1}^2)^2}{\tilde\rho_{k-1}^2}\right) \\
&= \tilde\sigma_{k-1}^2 -\frac{\tilde\sigma_{k-1}^4}{4\tilde\rho_{k-1}^2}.
\end{align*}
Therefore,
\begin{align*}
\tilde\sigma_k^2 & = (\tilde\xi_k^*)^2 -\gamma_{k-1}+\gamma_k \\
&\le 
\tilde\sigma_{k-1}^2 -\frac{\tilde\sigma_{k-1}^4}{4\tilde\rho_{k-1}^2}
-\gamma_{k-1}+\gamma_k \\
& =
\tilde\sigma_{k-1}^2 -\frac{\tilde\sigma_{k-1}^4}{4\Vert\nabla f(\x_{k-1})\Vert^2/\ell^2}
-\gamma_{k-1}+\gamma_k \\
& \le 
\tilde\sigma_{k-1}^2 -\frac{\tilde\sigma_{k-1}^4}{4\Vert\nabla f(\x_{k-1})\Vert^2/\ell^2}
-\frac{\Vert\nabla f(\x_{k-1})\Vert^2}{L\ell} && \mbox{(by \eref{eq:gammadiff})} \\
& \le 
\tilde\sigma_{k-1}^2 -2\cdot\frac{\tilde\sigma_{k-1}^2}{2\Vert\nabla f(\x_{k-1})\Vert/\ell}
\cdot \frac{\Vert\nabla f(\x_{k-1})\Vert}{\sqrt{L\ell}} && \mbox{(since $a^2+b^2\ge 2ab$)} \\
& =
\tilde\sigma_{k-1}^2 \left(1-\sqrt{\frac{\ell}{L}}\right).
\end{align*}

In the other case, $\tilde\sigma_{k-1}^2/2\ge \tilde\rho_{k-1}^2$
so we obtain
\begin{align*}
\tilde\sigma_k^2 & = \tilde\rho_{k-1}^2 -\gamma_{k-1}+\gamma_k
&& \mbox{(by \eref{eq:sigmadef2})} \\
&= \frac{\Vert \nabla f(\x_{k-1})\Vert^2}{\ell^2}  -\gamma_{k-1}+\gamma_k\\
&\le 
\frac{\Vert \nabla f(\x_{k-1})\Vert^2}{\ell^2}-
\frac{\Vert \nabla f(\x_{k-1})\Vert^2}{L\ell} &&\mbox {(by \eref{eq:gammadiff})} \\
& = 
\frac{\Vert \nabla f(\x_{k-1})\Vert^2}{\ell^2}(1-\ell/L) \\
& = 
\tilde\rho_{k-1}^2(1-\ell/L) \\
&\le 
\tilde\sigma_{k-1}^2(1-\ell/L)/2 && \mbox{(by the hypothesis of the case).}
\end{align*}
It is a simple matter to confirm that $(1-\eps)/2\le (1-\sqrt{\eps})$
for any $\eps\in[0,1]$, thus establishing the theorem in this case.
\end{proof}

\section{Relationship between GD and IA}
\label{sec:GDanalysis2}

We already observed that GD and IA both work in the same
affine subspace $\mathcal{M}_k$ on each iteration.  In this section,
we develop further insight into their connection.
On each step IA solves two optimization
problems exactly, \eref{eq:ia2.xupd} and \eref{eq:ia2.yupd},
to obtain $\x_k$ and $\y_k$.  We argue
that GD computes optimal
solutions to these two problems not for the actual objective function $f$
but for some other objective function that agrees
with its partial information 
about $f$.
This is stated precisely in the following two theorems, the
first about $\x_k$ and the second about $\y_k$.

Regarding the computation of $\x_k$, let us recall that
GD takes a steepest descent step from $\x_{k-1}$ in
line \eref{eq:gd.steep} followed by a line search in
\eref{eq:gd.xupd}.  The simplest line-search method for
minimizing a convex function
is bisection based on the sign of the directional derivative
of the objective along the line.  Evaluation of signs of derivatives
does not give any information other than the location of the minimizer
since univariate convex functions are unimodal.  (In other words,
bisection to find the minimizer of two convex functions $f_1,f_2$
with a common minimizer will evaluate the same sequence
of points for $f_1$ as for  $f_2$.)    Therefore, the information
about $f$ used in GD to find $\x_k$
is the value of $\nabla f(\x_{k-1})$, the line
$\aff\{\obx_{k-1},\y_{k-1}\}$, and the location of the minimizer on that
line.  The following theorem says that GD chooses the optimal
$\x_k\in\mathcal{M}_k$
given this partial information about the objective function
$f$.  (In contrast, IA chooses the optimal $\x_k$ for the
true objective $f$.)

\begin{theorem}
  Given scalar $L>0$, a point $\x_{k-1}\in\R^n$, $n\ge 2$,
  a nonzero vector $\g\in\R^n$, define
  $\obx_{k-1}=\x_{k-1}-\g/L$.  Also, assume we are
  given
  a line $\Lambda \subset \R^n$ containing
  $\obx_{k-1}$ but not $\x_{k-1}$, and point $\x_{k}\in\Lambda$.
  Let $\mathcal{M}_k=\aff\{\Lambda,\x_{k-1}\}$.
  Assume that
  \begin{equation}
    \g^T(\x_k-\obx_{k-1})<0.
    \label{eq:angassum}
  \end{equation}
  (Assumption \eref{eq:angassum} will be explained later.)
  Define
  $\mathcal{F}$ to be:
  \begin{align*}
    \mathcal{F}=\{f:\R^n\rightarrow\R: &\mbox{$f$ is convex}, \\
    &\argmin\{f(\x):\x\in \Lambda\}=\x_{k}, \\
    &\mbox{$\nabla f$ is $L$-Lipschitz}, \\
    & \nabla f(\x_{k-1})=\g\}.
  \end{align*}
  Define $q(\x)=\sup\{f(\x)-f(\x_{k-1}):f\in\mathcal{F}\}$.
  Then 
  \begin{equation}
    q(\x_k)=\min\{q(\x):\x\in\mathcal{M}_k\}.
      \label{eq:s0argmin}
  \end{equation}
\end{theorem}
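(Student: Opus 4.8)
The plan is to exploit the fact that $q$, being a pointwise supremum of the convex functions $\x\mapsto f(\x)-f(\x_{k-1})$, is itself convex; hence to prove \eref{eq:s0argmin} it suffices to produce a ``worst-case'' function $f^\star\in\mathcal F$ (or a maximizing sequence) that attains the value $q(\x_k)$ and whose \emph{unconstrained} minimizer is $\x_k$, i.e.\ $\nabla f^\star(\x_k)=\bz$. Indeed, given such an $f^\star$, convexity gives $f^\star(\x)\ge f^\star(\x_k)$ for every $\x$, so for any $\x\in\mathcal M_k$,
\[
q(\x)\ge f^\star(\x)-f^\star(\x_{k-1})\ge f^\star(\x_k)-f^\star(\x_{k-1})=q(\x_k),
\]
which is exactly \eref{eq:s0argmin} (and in fact shows $\x_k$ minimizes $q$ over all of $\R^n$). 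Thus the entire argument reduces to identifying the extremal $f^\star$ and showing its minimizer sits at $\x_k$.

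For the upper bound on $q(\x_k)$ I argue directly from the data defining $\mathcal F$. Since $\obx_{k-1}=\x_{k-1}-\g/L$ lies on $\Lambda$ and $\x_k=\argmin\{f(\x):\x\in\Lambda\}$, every $f\in\mathcal F$ satisfies $f(\x_k)\le f(\obx_{k-1})$; combining this with the $L$-smooth upper bound of \eref{eq:strconvdef} applied at $\x_{k-1}$ (the descent lemma, $f(\obx_{k-1})\le f(\x_{k-1})-\Vert\g\Vert^2/(2L)$) yields $f(\x_k)-f(\x_{k-1})\le-\Vert\g\Vert^2/(2L)$ for all $f\in\mathcal F$, so $q(\x_k)\le-\Vert\g\Vert^2/(2L)$. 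To see this value is sharp and, more importantly, that the extremal gradient at $\x_k$ is zero, I would invoke the co-coercivity form of smoothness, $f(\x_{k-1})\ge f(\x_k)+\nabla f(\x_k)^T(\x_{k-1}-\x_k)+\Vert\g-\nabla f(\x_k)\Vert^2/(2L)$, which bounds $f(\x_k)-f(\x_{k-1})$ above by a strictly concave isotropic quadratic in $\g_k:=\nabla f(\x_k)$ subject to the line-search constraint $\g_k^T\t=\bz$, where $\t$ spans $\Lambda$. Its unconstrained maximizer is $\g+L(\x_k-\x_{k-1})$, which equals a multiple of $\t$ because $\x_k,\obx_{k-1}\in\Lambda$; hence its projection onto $\{\g_k:\g_k^T\t=0\}$ is $\bz$, so the constrained maximizer is $\g_k=\bz$ with value $-\Vert\g\Vert^2/(2L)$. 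This pins down $q(\x_k)=-\Vert\g\Vert^2/(2L)$ and tells me the worst-case function is minimized at $\x_k$.

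The remaining, and hardest, step is to realize this extremal behavior by honest members of $\mathcal F$. The natural candidate is the extremal $L$-smooth convex interpolant through the triples $(\x_{k-1},\g,0)$ and $(\x_k,\bz,-\Vert\g\Vert^2/(2L))$ (a Huber-type function, or one obtained from smooth-convex interpolation). The subtlety is that saturating the bound forces $f^\star(\obx_{k-1})=f^\star(\x_k)$, so a true maximizer would have a non-unique minimizer along $\Lambda$ and fall just outside $\mathcal F$; I would therefore build a maximizing sequence $f_j\in\mathcal F$ with $\nabla f_j(\x_k)=\bz$ (making $\x_k$ the strict, unique minimizer on $\Lambda$) and $f_j(\x_k)-f_j(\x_{k-1})\to-\Vert\g\Vert^2/(2L)$, then pass to the limit in the displayed inequality of the first paragraph. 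This is precisely where hypothesis \eref{eq:angassum} enters: writing $\g^T(\x_k-\x_{k-1})=\g^T(\x_k-\obx_{k-1})-\Vert\g\Vert^2/L$ shows \eref{eq:angassum} forces $\g^T(\x_k-\x_{k-1})<0$ (and $\x_k\ne\obx_{k-1}$), which is exactly the first-order condition permitting a convex function to have its minimum at $\x_k$ while carrying gradient $\g$ at $\x_{k-1}$. Verifying that such interpolants exist, are convex and $L$-smooth, and can be taken with a unique minimizer on $\Lambda$ while approaching the value bound is the main technical obstacle; the rest is the soft convexity argument above.
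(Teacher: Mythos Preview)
Your proposal is correct and follows essentially the same strategy as the paper: bound $q(\x_k)\le -\Vert\g\Vert^2/(2L)$ via the descent lemma and minimality on $\Lambda$, then exhibit (a limit of) functions in $\mathcal{F}$ with \emph{global} minimizer at $\x_k$ attaining that value, so that $q(\x)\ge -\Vert\g\Vert^2/(2L)$ for every $\x$. The paper completes the step you flag as the ``main technical obstacle'' by passing to planar coordinates (with $\obx_{k-1}$ at the origin, $\Lambda$ the $x_1$-axis, $\x_k=(s,0)$, $\x_{k-1}=(a,b)$) and writing down an explicit one-parameter family of piecewise-quadratic $C^1$ functions $\hat f_\delta$---quadratic with curvature $L-\delta$ in $x_1$ on one side and $\delta$ on the other, plus $Ly^2/2$---whose unique minimizer is $(s,0)$, whose gradient at $(a,b)$ is $\g$, and for which $\hat f_\delta(s,0)-\hat f_\delta(a,b)\to -\Vert\g\Vert^2/(2L)$ as $\delta\to 0$; here \eref{eq:angassum} is exactly the condition $as<0$ needed for this construction. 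Your co-coercivity computation identifying the extremal $\nabla f(\x_k)=\bz$ is a nice piece of motivation that the paper does not make explicit, but the underlying mechanism and the role of \eref{eq:angassum} match.
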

The class $\mathcal{F}$ is meant to capture the set of all functions whose
partial information known to the GD algorithm agrees with the partial information
of the actual objective function, that is, $\nabla f(\x_{k-1})=\g$ and
$\x_k$ is the minimizer of $f$ on the line $\Lambda$.  The conclusion
of the theorem is that for the worst $f$ in this class (the sup appearing
in the definition of $q$), $\x_k$ chosen by GD is optimal over $\mathcal{M}_k$.
The hypotheses on the given data correspond to the induction hypotheses of GD
except for \eref{eq:angassum}, which we discuss later.

\begin{proof}
  Since the algorithm is invariant under translation and rotation
  of space, without loss of generality we can 
  transform coordinates to identify $\mathcal{M}_k$ with
  the $(x_1,x_2)$-plane. 
  Assume that the transformation places
  the point $\obx_{k-1}$ at the origin $(0,0)$,
  the line $\Lambda$ on the $x_1$-axis, the point
  $\x_k$ at $(s,0)$.
  Denote the point $\x_{k-1}$ with $(a,b)$ in this plane.
  Because of the identification of $(0,0)$ with $\obx_{k-1}$,
  we know that for $f\in\mathcal{F}$, $\g=\nabla f(\x_{k-1})=\nabla f(a,b)=(aL,bL)$ 
  in this rotated and translated coordinate system.  
  Since the length $\g$ is unchanged by translation and rotation,
  then $(a,b)$ must satisfy the restriction that $((aL)^2+(bL)^2)^{1/2}$ is the
  original length $\Vert\g\Vert$.  Observe that
  $f(0,0)-f(a,b)\le -\Vert\g\Vert^2/(2L)=-(L/2)(a^2+b^2)$
  by \eref{eq:fdesc0}.
  For the remainder of the proof, let $k_0=-(L/2)(a^2+b^2)$.
  Thus, $f(s,0)-f(a,b)\le k_0$
  since $(s,0)$ is the minimizer of $f\in\mathcal{F}$ over $\Lambda$
  (hence achieves a value
  less than $f(0,0$)).  This inequality holds for all $f\in\mathcal{F}$,
  thus showing that $q(\x_k)\le k_0$.

  In these transformed coordinates, assumption \eref{eq:angassum}
  is rewritten as the inequality $as<0$.
  For the remainder of this proof, assume $a>0$ hence $s<0$; the
  other case is obtained by reflection of the $x_1$-coordinate.

  Consider the function,
  $$\hat f_0(x,y) = \left\{
  \begin{array}{ll}
    Lx^2/2 + Ly^2/2,& x\ge 0, \\
    Ly^2/2, & x\le 0,
  \end{array}
  \right.
  $$
  The level curves of this function are semicircles in the right half-plane
  and parallel rays in the left half-plane.  It satisfies all the conditions
  for membership in $\mathcal{F}$ except that $(s,0)$ is not the unique minimizer
  over $\Lambda$;
  all points of the ray $\{(x,0):x\le 0\}$ are minimizers. Therefore,
  we perturb this function slightly.

  Fix a $\delta>0$ small.
  Define scalars $m=-\delta (s+a)/(L-2\delta)$ and $p=\delta m - \delta s$.
  Note that
  $m,p$
  tend to 0 as $\delta\rightarrow 0$, so assume that $\delta$ is
  sufficiently small that $2\delta<L$, $|m|<|s|$, $|m|<|a|$.
  Consider the following function $\hat f_\delta$:
  $$\hat f_\delta(x,y) = \left\{
  \begin{array}{ll}
    q_1(x-m)^2/2 + p(x-m)+Ly^2/2,& x\ge m, \\
    q_2(x-m)^2/2+p(x-m) +Ly^2/2, & x\le m,
  \end{array}
  \right.
  $$
  where $q_1=L-\delta$, $q_2=\delta$.
  
  It is straightforward to check that $\hat f_\delta \in\mathcal{F}$.
  In particular, $\partial \hat f_\delta(s,0)/\partial x=\delta(s-m)+p=0$ since
  $p=\delta(m-s)$.
  The choice of $m$ ensures that $\partial \hat f_\delta(a,b)/\partial x=aL$.
  Observe that $\hat f_\delta(s,0)-\hat f_\delta(a,b)\rightarrow k_0$ as $\delta\rightarrow 0$,
  and $(s,0)$ is the minimizer of $f_\delta$.  Therefore,  for all 
  $\x\in\mathcal{M}_k$, $f_\delta(\x)-f_\delta(\x_{k-1})$ is bounded
  below by $k_0$ plus a residual that tends to 0 as $\delta\rightarrow 0$.
  which means
  that for all $\x\in\mathcal{M}_k$, $q(\x)\ge k_0$ (since $f_\delta$ is a candidate
  for the supremum in the definition of $q(\cdot)$).
  Since we already established that $q(\x_k)\le k_0$, this proves the theorem.
\end{proof}

Let us know examine assumption \eref{eq:angassum}, which
in transformed coordinates is written $as<0$.  If
$as>0$, then the minimizer over $\mathcal{M}_k$
cannot lie on the $x$-axis, i.e.,
$\mathcal{F}$ is empty as the following argument shows.
(The case of $as=0$ needs separate
treatment, which we omit).  Consider an arbitrary $f\in \mathcal{F}$.
Select $t$ to solve the following equation:
\begin{equation}
  ((s+at,bt) - (a,b))^T(a,b)=0;
  \label{eq:saborth}
\end{equation}
one easily determines that
\begin{equation}
  t=\frac{a^2+b^2-as}{a^2+b^2}.
  \label{eq:tformula}
\end{equation}
It is impossible that $a^2+b^2-as<0$, or equivalently, that $t<0$ because then
\begin{align*}
  f(s,0)&\ge f(a,b)+\nabla f(a,b)^T((s,0)-(a,b))  && \mbox{(by the subgradient inequality)}
  \\
  & = f(a,b)+(aL,bL)^T((s,0)-(a,b)) \\
  &= f(a,b)+L(as-a^2-b^2) \\
  & > f(a,b),
\end{align*}
contradicting the minimality of $(s,0)$.

But there is also a contradiction when $t\ge 0$.
Consider the following chain of inequalities:
\begin{align*}
  f(s,0)+\frac{Lt^2\Vert(a,b)\Vert^2}{2}&\ge f(s+at,bt) &&\mbox{(by $L$-smoothness} \\
  &&& \mbox{since $\nabla f(s,0)=(0,0)$)} \\
  &\ge f(a,b)+\nabla f(a,b)^T((s+at,bt)-(a,b))\\
     &&& \mbox{(the subgradient inequality)} \\
  &=f(a,b) && \mbox{(using \eref{eq:saborth})} \\
    &\ge f(0,0)+L/2\Vert(a,b)\Vert^2 && \mbox{(by \eref{eq:fdesc0})} \\
    & \ge f(s,0)+L/2\Vert(a,b)\Vert^2 && \mbox{(since $(s,0)$ is the minimizer)}.
\end{align*}
This chain of inequalities starts and ends at the same quantity except for the
presence of $t^2$; thus, the inequalities can hold only if $t\ge 1$.  But
if $as>0$ and $a^2+b^2-as\ge 0$,
then it follows from \eref{eq:tformula} that $0\le t <1$.  So we conclude
that $as>0$ contradicts the optimality of $(s,0)$.
More comments on this matter appear at the end of this section.

The next theorem covers $\y_k$ and requires a different construction.
Its format and interpretation are analogous to the previous
theorem.
\begin{theorem}
  Suppose the following data is given: two points $\x_{k-1},\y_{k-1}$ in
  $\R^n$, $n\ge 3$, 
  a nonzero vector $\g\in\R^n$, four positive scalars $\rho,\sigma,\delta,\ell$
  that satisfy the following conditions.  For
  the remainder of the theorem, let $\oobx_{k-1}=\x_{k-1}-\g/\ell$.
  The conditions are:
  \begin{itemize}
  \item
    $\delta=\Vert\oobx_{k-1}-\y_{k-1}\Vert$,
  \item
    $\rho<\Vert\g\Vert/\ell$
  \item
    $\rho+\sigma\ge \delta$,
  \item
    $\delta\ge \sqrt{|\rho^2-\sigma^2|}$, and
  \item
    \begin{equation}
    (\y_{k-1}-\oobx_{k-1})^T\g > \rho^2\ell/\lambda^*,
      \label{eq:yxbbcond}
    \end{equation}
    where $\lambda^*$ is given by \eref{eq:lambdastar}.
    (Assumption \eref{eq:yxbbcond} will be explained  later.)
  \end{itemize}
  Let $\mathcal{F}$ be the following set of functions:
  \begin{align}
    \mathcal{F}=\{f:\R^n\rightarrow\R:
    & \mbox{$f$ is strongly convex with modulus $\ell$}, \notag \\
    & \nabla f(\x_{k-1})=\g, \notag \\
    & \frac{\Vert\g\Vert^2}{\ell^2}-\frac{2(f(\x_{k-1})-\min\{f\})}{\ell}=\rho^2,
    \mbox{ and} \label{eq:rhocond2}\\
    & \argmin\{f\}\in B(\y_{k-1},\sigma)\}. \notag
  \end{align}
  Here, $\min\{f\}$ is shorthand for $\min\{f(\x):\x\in\R^n\}$ and
  similarly for $\argmin\{f\}$.
  Let $\mathcal{M}_k$ denote $\aff\{\x_{k-1},\y_{k-1},\oobx_{k-1}\}$.
  Define
  $$q(\y)=\sup\{\Vert\y-\argmin\{f\}\Vert: f\in\mathcal{F}\}.$$
  Let $\y_k$ be the point computed by the GD algorithm for this data
  using \eref{eq:gd.yupd}.
  Then
  \begin{equation}
    \y_k=\argmin\{q(\y):\y\in \mathcal{M}_k\}.\label{eq:yksolve}
  \end{equation}
\end{theorem}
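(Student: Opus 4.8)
The plan is to show that $\y_k=\z^*$ (the point produced by \eref{eq:gd.yupd} with $\lambda_k=\lambda^*$ of \eref{eq:lambdastar}) is the Chebyshev centre, within the plane $\mathcal{M}_k$, of the set of \emph{realizable minimizers} $S=\{\argmin\{f\}:f\in\mathcal{F}\}$. Since $q(\y)=\sup_{\s\in S}\Vert\y-\s\Vert$ is the farthest--point distance from $\y$ to $S$, the claim \eref{eq:yksolve} is exactly the statement that $\z^*$ minimizes this farthest distance over $\y\in\mathcal{M}_k$. I would prove two matching bounds: an \emph{upper bound} $q(\z^*)\le\xi^*$ obtained from Lemma~\ref{lem:ballintersect2}, and a \emph{lower bound} $q(\y)\ge q(\z^*)$ for every $\y\in\mathcal{M}_k$, obtained by exhibiting enough members of $\mathcal{F}$ whose minimizers are extreme points of $S$.

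First I would pin down $S$. Exactly as in the derivation of \eref{eq:rdef0}, strong convexity together with $\nabla f(\x_{k-1})=\g$ and the value constraint \eref{eq:rhocond2} forces every $\x^*=\argmin\{f\}$ to satisfy $\Vert\oobx_{k-1}-\x^*\Vert\le\rho$; the last defining property of $\mathcal{F}$ gives $\Vert\y_{k-1}-\x^*\Vert\le\sigma$; and applying strong convexity at the minimizer itself ($\nabla f(\x^*)=\bz$) gives a third, less obvious, constraint $\Vert\x_{k-1}-\x^*\Vert^2\le\Vert\g\Vert^2/\ell^2-\rho^2=:R^2$. Hence $S\subseteq B(\oobx_{k-1},\rho)\cap B(\y_{k-1},\sigma)\cap B(\x_{k-1},R)$. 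Conversely I would show every point $\x^*$ interior to this triple intersection is realizable: take $f(\x)=\tfrac{\ell}{2}\Vert\x-\x^*\Vert^2+\phi(\x)$ with $\phi(\x)=\max(0,\p^T(\x-\x^*)-\beta)$ a nonnegative piecewise--linear convex function, choosing $\p=\g-\ell(\x_{k-1}-\x^*)$ and $\beta>0$ so that $\nabla f(\x_{k-1})=\g$ and $f(\x_{k-1})-f(\x^*)$ takes the value dictated by \eref{eq:rhocond2}. A short computation shows the two feasibility conditions on $\phi$ (nonnegativity of its value at $\x_{k-1}$ and the subgradient inequality) are \emph{exactly} the constraints $\Vert\x_{k-1}-\x^*\Vert\le R$ and $\Vert\oobx_{k-1}-\x^*\Vert\le\rho$. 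Taking closures then identifies $\overline S$ with the full triple intersection.

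For the upper bound I would use only the first two balls and apply Lemma~\ref{lem:ballintersect2} with $\x=\oobx_{k-1}$, $\y=\y_{k-1}$ and the given $\rho,\sigma,\delta$; the hypotheses $\rho+\sigma\ge\delta$ and $\delta\ge\sqrt{|\rho^2-\sigma^2|}$ are precisely those assumed, so $B(\oobx_{k-1},\rho)\cap B(\y_{k-1},\sigma)\subseteq B(\z^*,\xi^*)$ with $\z^*$ as in \eref{eq:zdef} and $\xi^*$ as in \eref{eq:xistardef}. Since $\overline S$ lies in this intersection and $\z^*=\y_k$, we get $q(\y_k)\le\xi^*$. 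For the lower bound the mechanism is the two antipodal ``tips'' $\t_\pm=\z^*\pm\xi^*\u$, where $\u$ is the in--plane unit vector orthogonal to $\y_{k-1}-\oobx_{k-1}$; these are the points of $\partial B(\oobx_{k-1},\rho)\cap\partial B(\y_{k-1},\sigma)$ lying in $\mathcal{M}_k$, they sit at distance $\xi^*$ from $\z^*$, and their midpoint is $\z^*$ because the radical axis of the two balls meets the line through the centres exactly at $\z^*$ (the parameter there equals $\lambda^*$). Granting $\t_\pm\in\overline S$, then for any $\y\in\mathcal{M}_k$ the triangle inequality gives $q(\y)\ge\max(\Vert\y-\t_+\Vert,\Vert\y-\t_-\Vert)\ge\tfrac12\Vert\t_+-\t_-\Vert=\xi^*$, with equality only at the midpoint $\y=\z^*$; equivalently $\bz$ lies in the subdifferential of the convex function $q$ at $\z^*$. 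This yields $\y_k=\argmin\{q(\y):\y\in\mathcal{M}_k\}$.

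The main obstacle is precisely the third ball $B(\x_{k-1},R)$, and this is where hypothesis \eref{eq:yxbbcond} enters. The tips $\t_\pm$ automatically lie in $B(\oobx_{k-1},\rho)\cap B(\y_{k-1},\sigma)$, but membership in $B(\x_{k-1},R)$ -- i.e.\ realizability -- is not free: in coordinates with $\oobx_{k-1}$ at the origin and $\y_{k-1}-\oobx_{k-1}$ along the first axis, one computes that $\t_\pm\in B(\x_{k-1},R)$ is equivalent to $\g^T(\t_\pm-\oobx_{k-1})\ge\ell\rho^2$, while \eref{eq:yxbbcond} is exactly the statement $\g^T(\z^*-\oobx_{k-1})=\lambda^*(\y_{k-1}-\oobx_{k-1})^T\g>\ell\rho^2$ at the \emph{midpoint}. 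Thus I expect the crux of the proof to be a careful accounting of the transverse term $\pm\xi^*\,\g^T\u$ separating the two tips from their midpoint, showing that \eref{eq:yxbbcond} -- together with the orthogonality $\rho^2+R^2=\Vert\oobx_{k-1}-\x_{k-1}\Vert^2$ of the first and third spheres, which places $\partial B(\oobx_{k-1},\rho)\cap\partial B(\x_{k-1},R)$ exactly on the tight--strong--convexity locus -- keeps the relevant extreme minimizers of $S$ inside $B(\x_{k-1},R)$, so that $\z^*$ survives as the Chebyshev centre. Two routine points remain: the extreme minimizers are attained only in the limit of interior points, so the lower bound uses a sequence in $\mathcal{F}$ approaching each tip; and because $S\subseteq\R^n$ rather than $\mathcal{M}_k$, one invokes the reflection symmetry across $\mathcal{M}_k$ (all three ball--centres lie in $\mathcal{M}_k$) to justify restricting the Chebyshev--centre computation to the plane.
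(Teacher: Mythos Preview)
Your overall strategy---upper bound $q(\y_k)\le\xi^*$ via Lemma~\ref{lem:ballintersect2}, lower bound by exhibiting two antipodal realizable minimizers whose midpoint is $\z^*$---is exactly the paper's approach.  The identification of the third ball $B(\x_{k-1},R)$ with $R^2=\Vert\g\Vert^2/\ell^2-\rho^2$ is also correct (the paper mentions a weaker version of it in the discussion following the proof).

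The gap is in your choice of the two tips.  You take $\u$ to be the \emph{in-plane} unit vector in $\mathcal{M}_k$ orthogonal to $\y_{k-1}-\oobx_{k-1}$.  In coordinates with $\oobx_{k-1}$ at the origin, $\y_{k-1}=(\delta,0)$ and $\x_{k-1}=(a,b)$, this places the tips at $(\lambda^*\delta,\pm\xi^*)$, and the third-ball constraint $\Vert\t_\pm-\x_{k-1}\Vert\le R$ becomes $a\lambda^*\delta\pm b\xi^*\ge\rho^2$.  Hypothesis \eref{eq:yxbbcond} is $a\lambda^*\delta>\rho^2$; for $b\ne0$ one of the two signs gives $a\lambda^*\delta-|b|\xi^*$, which can fall below $\rho^2$.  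So your ``careful accounting of the transverse term'' cannot succeed in general: one in-plane tip may simply fail to be realizable, and the lower bound collapses.

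The paper resolves this by placing the tips \emph{perpendicular} to $\mathcal{M}_k$: it embeds $\mathcal{M}_k$ in $\R^3$ and sets $\x^*=(\lambda^*\delta,0,\pm\xi^*)$.  This is precisely why the hypothesis $n\ge3$ is present.  Since $\g=\ell(\x_{k-1}-\oobx_{k-1})\in{\bf T}\mathcal{M}_k$, the transverse term $\g^T\u$ vanishes, and the third-ball constraint for both tips reduces to the single inequality $a\lambda^*\delta>\rho^2$, which is exactly \eref{eq:yxbbcond}.  The lower bound then drops out immediately: for any $\y\in\mathcal{M}_k$, $\Vert\y-\x^*\Vert^2=\Vert\y-\z^*\Vert^2+(\xi^*)^2\ge(\xi^*)^2$, with equality iff $\y=\z^*$.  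The paper also writes down the witnessing function explicitly, $f(\x)=(\ell/2)\Vert\x-\x^*\Vert^2+\ell\bigl|(\x^*)^T(\x-\x^*)\bigr|$, and verifies membership in $\mathcal{F}$ directly (using \eref{eq:yxbbcond} only to fix the branch of the absolute value at $\x_{k-1}$), so no limiting argument is needed.
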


\begin{proof}
  By translating and rigidly rotating space, we can identify $\mathcal{M}_k$ with
  the $(x_1,x_2)$ plane.  
  After this coordinate transformation,
  we may assume $\oobx_{k-1}$ coincides with the origin $(0,0)$.
  For the remainder of the discussion, remaining coordinates $x_3,x_4,\ldots,x_n$\
  are 
  not written and are assumed to be 0's.
  Let us write $\x_{k-1}=(a,b)$.
  Since $\oobx_{k-1}$ is at the origin, this implies $\g=(a\ell,b\ell)$.
  Choose the transformation to make
  $\y_{k-1}$ lie on the positive $x_1$-axis at position $(\delta,0)$.
  (Recall that $\delta=\Vert\y_{k-1}-\oobx_{k-1}\Vert$.)
  
  Let $\lambda^*$ be as in \eref{eq:lambdastar}. As in \eref{eq:gd.yupd},
  let  $\y_{k}=(1-\lambda^*)\oobx_{k-1}+\lambda^*\y_{k-1}$, which in
  this coordinate system is 
  $\y_k=(\delta\lambda^*, 0).$
  
  Let $f$ be an arbitrary member of $\mathcal{F}$. The hypotheses
  on $\mathcal{F}$ imply that 
  $\argmin\{f\}\in B(\oobx_{k-1},\rho)\cap B(\y_{k-1},\sigma)$, and therefore,
  by Lemmas \ref{lem:ballintersect1} 
  and \ref{lem:ballintersect2}, $\Vert\argmin\{f\}-\y_k\Vert\le \xi^*$,
  where $\xi^*$ is given by \eref{eq:xistardef}.  This shows that
  $q(\y_{k})\le \xi^*$.

  The remainder of the proof shows that
  $q(\y)>\xi^*$ for $\y\in \mathcal{M}_k-\{\y_k\}$, which will
  establish \eref{eq:yksolve}.   Let
  $\kappa$ stand for either $+1$ or $-1$.  Embed
  $\mathcal{M}_k$ in one higher dimension and write 3-tuples of
  coordinates (so that $\x_{k-1}=(a,b,0)$, 
  $\g=(a\ell,b\ell,0)$, $\y_{k-1}=(\delta,0,0)$ and
  so forth).  Define
  \begin{equation}
  \x^*=\left(\begin{array}{c}
    \delta\lambda^* \\
    0 \\
    \kappa \xi^*
  \end{array}
  \right).
  \label{eq:xoptdef1}
  \end{equation}
  Define 
  $$f(\x)=(\ell/2)\Vert\x-\x^*\Vert^2 +\ell\left|(\x^*)^T(\x-\x^*)\right|.$$
  First, observe the obvious consequences of this
  formula that $\x^*=\argmin\{f\}$ and that $f(\x^*)=0$.
  We claim that $f\in\mathcal{F}$.  The fact that $f$ is 
  $\ell$-strongly convex follows from the presence of the
  first term.  The second term is convex but not strongly convex.

  In order to establish the remaining conditions for
  membership in $\mathcal{F}$, we
  first determine which branch of the absolute value
  holds when evaluating $f(\x_{k-1})$; in particular,
  we establish the inequality that $(\x^*)^T(\x_{k-1}-\x^*)> 0$,
  i.e., that $(\x^*)^T\x_{k-1}>\Vert\x^*\Vert^2$.  The left-hand side
  evaluates to $a\delta\lambda^*$, while the right-hand side
  evaluates to $(\delta\lambda^*)^2+(\xi^*)^2$
  which simplifies to $\rho^2$ according to
  \eref{eq:lambdastar} and \eref{eq:xistardef}.  
  Thus, we must establish $a\delta\lambda^*>\rho^2$; this follows
  from \eref{eq:yxbbcond} which states that
  $\delta a\ell>\rho^2\ell/\lambda^*$ in the transformed coordinates.

  This inequality implies that in the neighborhood of
  $\x_{k-1}$, the absolute value sign appearing in the definition
  of $f$ may be dropped.
  We now establish
  the remaining conditions for membership of $f$ in $\mathcal{F}$.
  We have $\nabla f(\x_{k-1})=\ell(\x_{k-1}-\x^*)+\ell\x^*=\ell\x_{k-1}=\g$,
  so the second condition is established. 
  For the third condition, recalling that $(\x^*)^T\x_{k-1}=a\delta\lambda^*$
  while $\Vert\x^*\Vert^2=\rho^2$, we compute,
  \begin{align*}
    f(\x_{k-1}) &=(\ell/2)((a-\delta\lambda^*)^2+b^2+(\xi^*)^2) + \ell(a\delta\lambda^*-\rho^2) \\
    &=(\ell/2)(a^2 + b^2 -\rho^2),
  \end{align*}
  where, to obtain the second line, we combined like terms and used
  the already established identity  $(\delta\lambda^*)^2+(\xi^*)^2=\rho^2$.
  Therefore,
  \begin{align*}
    \frac{\Vert\g\Vert^2}{\ell^2} - \frac{2}{\ell}(f(\x_{k-1})-f(\x^*))
    &= (a^2+b^2) - (a^2+b^2-\rho^2) \\
    &=\rho^2,
  \end{align*}
  thus establishing \eref{eq:rhocond2}.  For the last condition,
  \begin{align*}
    \Vert\y_{k-1}-\x^*\Vert^2 &= (\delta-\delta\lambda^*)^2+(\xi^*)^2 \\
    &= (\sigma^*)^2.
  \end{align*}

  Thus, membership of $f\in\mathcal{F}$ is established.  Note that
  the two choices for $\kappa$, namely, $\pm 1$ 
  lead to two different optimizers $\x^*$ in \eref{eq:xoptdef1}
  whose distance apart is
  $2\xi^*$.  Therefore, the midpoint of the two optimizers is the
  only point in $\R^n$ whose distance from both of them is
  bounded above by $\xi^*$.  This midpoint is exactly $\y_k$.  This
  proves that $q(\y_k)\ge \xi^*$ (we already showed that $q(\y_k)\le \xi^*)$),
  and also that for any $\y\in\mathcal{M}_k-\{\y_k\}$, $q(\y)>\xi^*$.
  This establishes \eref{eq:yksolve}.
\end{proof}  

We now discuss the conditions imposed on the given data.
The condition $\delta=\Vert\oobx_{k-1}-\y_{k-1}\Vert$ is
simply part of the construction, and the conditions
$\rho<\Vert\g\Vert/\ell$ and
$\rho+\sigma\ge \delta$ ensure that $\mathcal{F}$ is
nonempty.  The condition
$\delta\ge \sqrt{|\rho^2-\sigma^2|}$ is the main case of
the two cases arising in the main loop of GD.
In particular, the condition \eref{eq:gd.ifstmt} is used to establish
\eref{eq:gd.chain1}.  We omit
the treatment of the minor case.

Finally, we discuss \eref{eq:yxbbcond}.  If a strengthened version of this
condition fails to hold,  then
$\mathcal{F}$ is empty.
In particular, strong convexity implies that the optimizer lies in a ball
of radius $\Vert\g\Vert/\ell$ about $\x_{k-1}$, i.e.,
$$\Vert\x_{k-1}-\x^*\Vert^2 \le \Vert\g\Vert^2/\ell^2.$$
In the
transformed coordinates, this is written
$$(a-\delta\lambda^*)^2+b^2+(\xi^*)^2\le a^2+b^2,$$
which simplifies to
$$2a\delta\lambda^*\ge\rho^2,$$
while \eref{eq:yxbbcond} is written as $a\delta\lambda^*>\rho^2$ in the
transformed coordinates, so the
two bounds differ by a factor of 2.  In the case that
$\rho^2\in [a\delta\lambda^*,2a\delta\lambda^*]$, the two possibilities are (1)
$\mathcal{F}=\emptyset$, or (2) there is an $f\in\mathcal{F}$, but
the construction of $f$ used in the proof
does not work.  We do not know
which possibility is correct.

We now summarize the results in this section with some observations.
We have shown that the computations of $\x_k,\y_k$ in the GD algorithm
solve minimization problems akin to those in IA except that
the minimizer pertains to the worst case $f$ that agrees with
the partial information that GD has about $f$ rather than the true $f$.

In both cases, the theorems had some apparently extraneous assumptions,
namely \eref{eq:angassum} in the first theorem, and \eref{eq:yxbbcond} in the second.
However, in both these cases, the extraneous assumptions indicate
that GD may not be using the
information about $f$ entirely.  In the case of the first theorem,
if $\g^T(\x_{k}-\obx_{k-1})>0$
then the minimizer of $f$ over $\mathcal{M}_k$ cannot be on the
line searched by GD, yet GD does not use this information.
In the case of the second theorem, GD does not
use the fact that $\x^*\in  B(\x_{k-1},\Vert \nabla f(\x_{k-1})\Vert/\ell)$.

The main point of this section is to clarify the relationship between IA
and GD, but the preceding paragraph reveals a second point.  By constructing
these example functions to show that the GD algorithm is the best possible
in some cases but not others
given the limited information that it uses,
we also show that it may be possible to improve
on GD by making better use of the information
(such as the third ball mentioned in the
previous paragraph) in certain cases.
We do not pursue this idea here, but
see, e.g., \cite{Drusvyatskiy} for results in this direction.

\section{Computability of the potential}
\label{sec:computable}

A point to make about the GD algorithm and its
analysis is that the potential $\tilde\sigma_k^2$ is
computable on every iteration of the algorithm, i.e.,
it does not require prior knowledge of $\x^*$.  (It does,
however, require prior knowledge of $\ell$.)
This fact
can also be deduced from the original
presentation of \cite{bubeck}, although the computability
is not further used therein.

In this section we explain in more detail what we
mean by ``computable''. In particular, a potential
$\tilde\sigma_k$ that is an upper bound to \eref{eq:psidef}
is computable if it has the
following properties.

\begin{enumerate}
\item
  It must be possible to compute the potential without
  prior knowledge of $\x^*$ or $f(\x^*)$.
  This is the reason that we regard the potential
  $\Psi_k$ itself defined in \eref{eq:psidef} as
  noncomputable.
\item
  It must have an {\em a posterior} dependence on the actual convergence
  of the algorithm, in the sense that if
  $\x_k,\y_k$ are very close to $\x^*$, then it
  should be the case that $\tilde\sigma_k$ is
  very close to zero.  In particular, this
  rules out using a completely {\em a priori} potential like
  $$\tilde\sigma_k := C\Vert\nabla f(\x_0)\Vert^2\left(1-\sqrt{\frac{\ell}{L}}\right)^k$$
  for a fixed constant $C$.  Although this potential indeed
  is an upper bound on \eref{eq:psidef} and is computable, it has no relationship
  to the current iterate and therefore has no algorithmic use.
\item
  The potential should decrease by the factor $(1-\sqrt{\ell/{L}})$
  per iteration (or perhaps $(1-\mbox{const}\cdot\sqrt{\ell/{L}})$, since a constant
  factor improvement may be possible).  Thus, although
  $C\Vert \nabla f(\x_k)\Vert^2$ for a correctly chosen $C$ is
  an upper bound on \eref{eq:psidef}, it does not satisfy
  our requirement of steady decrease and in fact can be
  oscillatory.

  The significance of guaranteed decrease in the potential is twofold.
  First, the guaranteed decrease is useful for theoretical analysis
  to establish a linear convergence rate.  Indeed, it is used
  herein for this
  purpose to establish previously known convergence rates for AG
  and CG in a new manner.

  Second, steady decrease in the potential can be used in an
  algorithm to ensure that progress is being made.  To give
  one example not pursued herein, consider the problem of detecting stagnation
  due to imprecise arithmetic in linear conjugate gradient.
  Although
  $\Vert \nabla f(\x_k)\Vert^2$ is commonly used as a termination
  criterion for linear conjugate gradient, it is not suitable
  for use as a stagnation test because it can be highly oscillatory
  and therefore cannot be used to check whether a single
  iteration was successful.
  A steadily decreasing potential, however, could be used
  in a CG stagnation test.  (We have preliminary results
  on this matter that will be the subject of future work.)

  Our present algorithmic use of the potential as a measure of steady
  decrease is reported in Sections~\ref{sec:hybrid}--\ref{sec:comp}.  
  We present an experiment to illustrate why
  $\Vert \nabla f(\x_k)\Vert^2$ is not a suitable substitute for
  $\tilde\sigma_k$ in Section~\ref{sec:comp}.
\end{enumerate}

The potential for GD given by
\eref{eq:sigmadef} and \eref{eq:sigmadef2}.  As mentioned
earlier, it is a slight variant of the potential defined
by the authors of GD; theirs also has these properties.
It is somewhat surprising that the same potential also
applies to conjugate gradient and accelerated
gradient, as developed in the next few sections.
We do not know of any other computable potential with these
properties.

\section{Analysis of linear conjugate gradient}
\label{sec:CGanalysis}

The linear conjugate gradient (CG) algorithm for 
minimizing $f(\x)=\x^TA\x/2 -\b^T\x$, 
where
$A$ is a symmetric positive definite matrix, 
is due to Hestenes and Stiefel
\cite{hestenesstiefel}
and is as follows.
\begin{align}
& \mbox{\bf Linear Conjugate Gradient} \notag \\
& \x_0:=\mbox{arbitrary} \notag \\
& \r_0:=\b-A\x_0 \notag \\
& \p_1:=\r_0 \notag \\
& \mbox{for } k:=1,2,\ldots, \notag \\
&\displaystyle \hphantom{\mbox{for }} 
\alpha_{k}:= \frac{\r_{k-1}^T\r_{k-1}}{\p_{k}^TA\p_{k}} \label{eq:alphadef} \\
&\hphantom{\mbox{for }} 
\x_{k} := \x_{k-1}+\alpha_{k}\p_{k} \label{eq:cg.xupd} \\
&\hphantom{\mbox{for }} 
\r_{k} := \r_{k-1}-\alpha_{k} A\p_{k} \label{eq:rupd} \\
&\displaystyle \hphantom{\mbox{for }} 
\beta_{k+1}:= \frac{\r_{k}^T\r_{k}}{\r_{k-1}^T\r_{k-1}} \label{eq:betadef}\\
&\hphantom{\mbox{for }} 
\p_{k+1} :=  \beta_{k+1}\p_{k}+\r_{k} \label{eq:pupd} \\
&\mbox{end} \notag
\end{align}

We now show that linear CG exactly implements
Algorithm IA, and therefore also satisfies the bound of Theorem~\ref{thm:iacvg}.
This is perhaps surprising because CG does not have prior
information about $\x^*$.  The following key results about CG are from the
original paper:

\begin{theorem}(\cite{hestenesstiefel})
Let $\mathcal{V}_k=\x_0+\Span\{\r_0,\ldots,\r_{k-1}\}$ in CG.
Then

(a) An equivalent
formula is $\mathcal{V}_k=\x_0+\Span\{\p_1,\ldots,\p_{k}\}$,

(b) $\x_k$ is the minimizer of $f(\x)$ over $\mathcal{V}_k$,

(c) $\r_k=-\nabla f(\x_k)$, and

(d) $\x_k+\tau_k\p_k$ is the minimizer of $\Vert \x-\x^*\Vert$ over
$\mathcal{V}_k$, where
\begin{equation}
\tau_k = \frac{2(f(\x_k)-f(\x^*))}{\Vert\r_{k-1}\Vert^2}.
\label{eq:tau_k}
\end{equation}
\label{thm:hest}
\end{theorem}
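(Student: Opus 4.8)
The plan is to prove each of the four claims, noting that (a)--(c) are the classical facts about linear CG while (d) is the key new ingredient needed to match IA's update \eref{eq:ia2.yupd}. These are all consequences of the orthogonality relations built into the CG recurrences, so I would start by establishing those relations.

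\textbf{Claims (a)--(c).} For (a), I would argue by induction that $\Span\{\r_0,\ldots,\r_{k-1}\}=\Span\{\p_1,\ldots,\p_k\}$ using the update $\p_{k+1}=\beta_{k+1}\p_k+\r_k$ in \eref{eq:pupd}, which shows each new $\p$ is a combination of the previous search direction and the new residual (and vice versa). For (c), the key is to verify $\r_k=\b-A\x_k$ by induction: this holds for $k=0$ by definition, and the recurrences \eref{eq:rupd} and \eref{eq:cg.xupd} are designed precisely so that $\r_k=\r_{k-1}-\alpha_k A\p_k=\b-A\x_{k-1}-\alpha_k A\p_k=\b-A\x_k$. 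Since $\nabla f(\x)=A\x-\b$, this gives $\r_k=-\nabla f(\x_k)$. For (b), I would establish the two orthogonality properties that characterize CG: the residuals are mutually orthogonal, $\r_i^T\r_j=0$ for $i\ne j$, and the search directions are $A$-conjugate, $\p_i^TA\p_j=0$ for $i\ne j$. The choice of $\alpha_k$ in \eref{eq:alphadef} forces $\r_k^T\p_k=0$ (equivalently $\nabla f(\x_k)^T\p_k=0$), and $\beta_{k+1}$ in \eref{eq:betadef} is chosen to maintain conjugacy; a simultaneous induction yields both families of relations. Minimality of $\x_k$ over $\mathcal{V}_k$ then follows because $\nabla f(\x_k)=-\r_k$ is orthogonal to $\Span\{\r_0,\ldots,\r_{k-1}\}$, which is the tangent space ${\bf T}\mathcal{V}_k$, and $f$ is convex.

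\textbf{Claim (d).} This is the part requiring genuine work. I seek the point minimizing $\Vert\x-\x^*\Vert$ over $\mathcal{V}_k$, i.e.\ the orthogonal (Euclidean) projection of $\x^*$ onto $\mathcal{V}_k$. The minimizer $\w$ is characterized by $(\w-\x^*)\perp{\bf T}\mathcal{V}_k=\Span\{\p_1,\ldots,\p_k\}$. Since $\x_k$ already minimizes $f$ over $\mathcal{V}_k$, I know $\nabla f(\x_k)=A(\x_k-\x^*)=-\r_k$ is $A$-orthogonal considerations aside, simply orthogonal to $\p_1,\ldots,\p_k$. The natural guess is that the Euclidean-closest point differs from $\x_k$ only in the $\p_k$ direction, so I would posit $\w=\x_k+\tau_k\p_k$ and solve for $\tau_k$ from the orthogonality condition $(\w-\x^*)^T\p_j=0$ for $j=1,\ldots,k$. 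The crucial observation is that for $j<k$ the conjugacy relations make the $\p_k$-contribution vanish and $\x_k-\x^*$ is already orthogonal to $\p_j$ in the relevant sense; the only nontrivial equation is the one coming from $\p_k$, giving $\tau_k\Vert\p_k\Vert^2 = -(\x_k-\x^*)^T\p_k$. I would then convert the right-hand side into the stated formula \eref{eq:tau_k} using $\x_k-\x^* = A^{-1}(-\r_k)$, the relation $\r_k^T\p_k=0$, and the identity $f(\x_k)-f(\x^*)=\tfrac12(\x_k-\x^*)^TA(\x_k-\x^*)=\tfrac12\Vert\r_k\Vert_{A^{-1}}^2$, together with $\p_k=\r_{k-1}+\beta_k\p_{k-1}$ to relate $\Vert\p_k\Vert^2$ and inner products back to $\Vert\r_{k-1}\Vert^2$.

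\textbf{Main obstacle.} I expect the delicate bookkeeping to be in (d): verifying that the minimizer is confined to the single direction $\p_k$ (rather than a general combination of all $\p_j$) and that the orthogonality equations for $j<k$ are automatically satisfied. This is exactly where the interplay between Euclidean orthogonality (needed for the projection onto $\mathcal{V}_k$) and $A$-orthogonality (the native geometry of CG) must be reconciled, and where the precise algebraic simplification producing the clean expression \eref{eq:tau_k} in terms of $\Vert\r_{k-1}\Vert^2$ will demand care.
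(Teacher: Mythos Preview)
The paper does not prove this theorem; it is quoted from Hestenes and Stiefel (parts (a)--(c) are standard textbook facts, and part (d) is their Theorem~6:5).  So there is no ``paper's proof'' to compare against, and the question reduces to whether your outline is correct.

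Your treatment of (a)--(c) is fine.  Your plan for (d), however, contains a real error of mechanism.  You write that for $j<k$ ``the conjugacy relations make the $\p_k$-contribution vanish and $\x_k-\x^*$ is already orthogonal to $\p_j$ in the relevant sense.''  Neither claim is true: conjugacy gives $\p_k^TA\p_j=0$, not $\p_k^T\p_j=0$ (indeed $\p_k^T\p_{k-1}=\beta_k\Vert\p_{k-1}\Vert^2\ne 0$), and $(\x_k-\x^*)^T\p_j=-\r_k^TA^{-1}\p_j$ does not vanish for $j<k$ either.  So the $k-1$ ``earlier'' orthogonality equations are genuinely nontrivial, and your argument, as written, would leave them unsatisfied.

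The actual reason a single scalar $\tau_k$ works is not that these terms vanish but that they are \emph{constant in $j$}.  Using the orthogonal basis $\r_0,\ldots,\r_{k-1}$ of ${\bf T}\mathcal{V}_k$ instead of the $\p_j$'s, the two key identities (both provable by short inductions using \eref{eq:pupd}, \eref{eq:betadef}, and the telescoping $\r_{j}-\r_{j+1}=\alpha_{j+1}A\p_{j+1}$) are
\[
\p_k^T\r_j=\Vert\r_{k-1}\Vert^2
\quad\text{and}\quad
(\x^*-\x_k)^T\r_j=\r_k^TA^{-1}\r_j=2\bigl(f(\x_k)-f(\x^*)\bigr),
\qquad j=0,\ldots,k-1.
\]
Because both are independent of $j$, the $k$ equations $(\x_k+\tau\p_k-\x^*)^T\r_j=0$ collapse to the single equation $-2(f(\x_k)-f(\x^*))+\tau\Vert\r_{k-1}\Vert^2=0$, which yields exactly \eref{eq:tau_k}.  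This constancy, not vanishing, is the point you are missing and is precisely the ``interplay between Euclidean and $A$-orthogonality'' that your final paragraph correctly flags as the hard part.
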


Part (b) appears as Theorem 4:3 of \cite{hestenesstiefel},
while parts (a) and  (c) are not stated explicitly.  All of (a)--(c)
are covered
by most textbook treatments of CG.  Part (d) appears as Theorem 6:5 and
is less well known.

The following theorem establishes the claim that Algorithm CG implements
IA.  The principal result is part (b).  The remaining parts
are necessary to support the induction proof.

\begin{theorem}
Suppose IA and CG are applied to the same quadratic function with
the same $\x_0$.  Let the sequences of iterates be denoted
$\x_k^{\rm IA}$ and $\x_k^{\rm CG}$ respectively.
Then for each $k=1,2,\ldots$,

(a) $\mathcal{M}_{k}\subset \mathcal{V}_k$,

(b) $\x_k^{\rm CG}=\x_k^{\rm IA}$, and

(c) $\y_k=\x_{k}^{\rm CG}+\tau_{k}\p_{k}$.

\end{theorem}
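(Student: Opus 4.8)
The plan is to prove all three parts simultaneously by induction on $k$, leaning throughout on the containment $\mathcal{M}_k\subset\mathcal{V}_k$ to transfer the optimality properties of the CG iterates (Theorem~\ref{thm:hest}(b) and (d)) onto the smaller affine sets $\mathcal{M}_k$ in which IA operates. For the base case $k=1$, I would observe that $\nabla f(\x_0)=A\x_0-\b=-\r_0$, so that $\mathcal{M}_1=\x_0+\Span\{\nabla f(\x_0)\}$ and $\mathcal{V}_1=\x_0+\Span\{\r_0\}$ coincide. Part (a) is then immediate; part (b) holds because $\x_1^{\rm IA}$ and $\x_1^{\rm CG}$ minimize $f$ over the same set; and part (c) is exactly Theorem~\ref{thm:hest}(d) applied to $\mathcal{V}_1=\mathcal{M}_1$.

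For the inductive step, assume (a)--(c) at index $k$. To establish (a) at $k+1$, I would check that each generator of $\mathcal{M}_{k+1}=\x_k+\Span\{\y_k-\x_k,\nabla f(\x_k)\}$ lies in $\mathcal{V}_{k+1}$: the base point $\x_k=\x_k^{\rm CG}\in\mathcal{V}_k\subset\mathcal{V}_{k+1}$ by (b); the direction $\nabla f(\x_k)=-\r_k$ by Theorem~\ref{thm:hest}(c) lies in the tangent space $\Span\{\r_0,\dots,\r_k\}$ of $\mathcal{V}_{k+1}$; and $\y_k-\x_k=\tau_k\p_k$ by the inductive form of (c), which lies in the same tangent space since $\p_k\in\Span\{\p_1,\dots,\p_k\}$ and this span equals $\Span\{\r_0,\dots,\r_k\}$ by Theorem~\ref{thm:hest}(a). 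Here $\tau_k\ne 0$ unless $\x_k=\x^*$, in which case the algorithm has already terminated.

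The heart of the argument is parts (b) and (c), where the key observation is that the short recurrences of CG force both $\x_{k+1}^{\rm CG}$ and the point $\x_{k+1}^{\rm CG}+\tau_{k+1}\p_{k+1}$ to land back inside the two-dimensional slice $\mathcal{M}_{k+1}$, even though $\mathcal{V}_{k+1}$ may be higher-dimensional. Concretely, substituting $\p_{k+1}=\beta_{k+1}\p_k+\r_k$ into $\x_{k+1}^{\rm CG}=\x_k+\alpha_{k+1}\p_{k+1}$ and then replacing $\p_k=(\y_k-\x_k)/\tau_k$ and $\r_k=-\nabla f(\x_k)$ expresses $\x_{k+1}^{\rm CG}$ as $\x_k$ plus a linear combination of $\y_k-\x_k$ and $\nabla f(\x_k)$, so $\x_{k+1}^{\rm CG}\in\mathcal{M}_{k+1}$. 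Since $\x_{k+1}^{\rm CG}$ globally minimizes $f$ over the larger set $\mathcal{V}_{k+1}$ and lies in the subset $\mathcal{M}_{k+1}$, it must also minimize $f$ over $\mathcal{M}_{k+1}$, giving $\x_{k+1}^{\rm IA}=\x_{k+1}^{\rm CG}$ and hence (b). For (c), the same substitution shows $\p_{k+1}$ itself lies in ${\bf T}\mathcal{M}_{k+1}$, so $\x_{k+1}^{\rm CG}+\tau_{k+1}\p_{k+1}\in\mathcal{M}_{k+1}$; by Theorem~\ref{thm:hest}(d) this point minimizes $\Vert\x-\x^*\Vert$ over $\mathcal{V}_{k+1}$, hence over the subset $\mathcal{M}_{k+1}$ as well, and since the nearest point to $\x^*$ in an affine set is unique it equals $\y_{k+1}$.

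I expect the main obstacle to be the bookkeeping around degenerate iterations: one must confirm $\tau_k>0$ (equivalently $\x_k\ne\x^*$) so that $\p_k$ can be recovered from $\y_k-\x_k$, and more generally that $\mathcal{M}_{k+1}$ is genuinely two-dimensional rather than collapsing when $\y_k=\x_k$ or $\nabla f(\x_k)$ is parallel to $\y_k-\x_k$. These are precisely the situations in which CG terminates early, and the argument should treat them as having already reached $\x^*$. The algebra itself is routine once the substitutions $\nabla f(\x_k)=-\r_k$ and $\y_k-\x_k=\tau_k\p_k$ are in hand.
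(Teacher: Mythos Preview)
Your proposal is correct and follows essentially the same approach as the paper's proof: both argue by induction, establishing $\mathcal{M}_{k+1}\subset\mathcal{V}_{k+1}$ via the identifications $\y_k-\x_k=\tau_k\p_k$ and $\nabla f(\x_k)=-\r_k$, then using the short recurrence $\p_{k+1}=\beta_{k+1}\p_k+\r_k$ to show that $\x_{k+1}^{\rm CG}$ and $\x_{k+1}^{\rm CG}+\tau_{k+1}\p_{k+1}$ land back in $\mathcal{M}_{k+1}$, so that optimality over $\mathcal{V}_{k+1}$ descends to optimality over $\mathcal{M}_{k+1}$. Your added discussion of degenerate iterations ($\tau_k=0$, early termination) is a reasonable caveat that the paper leaves implicit.
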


\begin{proof}
For the $k=1$ case, observe that $\p_1=-\nabla f(\x_0)$ so
$\mathcal{M}_1=\mathcal{V}_1$. Since $\x_1^{\rm CG}$ minimizes
$f(\x)$ over $\mathcal{V}_1$ while $\x_1^{\rm IA}$ minimizes
$f(\x)$ over $\mathcal{M}_1$, we conclude $\x_1^{IA}=\x_1^{\rm CG}$.
For (c), observe that $\y_1$ minimizes $\Vert\y-\x^*\Vert$
over $\mathcal{M}_1$ by \eref{eq:ia2.yupd}, while $\x_1^{\rm CG}+\tau_1\p_1$
minimizes the same function over the same affine space, so (c)
is established.

Now assuming (a)--(c) hold for some $k\ge 1$, we establish them for $k+1$.
We will write $\x_k$ for both $\x_k^{\rm CG}$ and  $\x_k^{\rm IA}$
since these
are equal by induction.  For (a), we start with
$\mathcal{M}_{k+1}=\x_k+\Span\{\x_k-\y_k,\nabla f(\x_k)\}$.
We already know from (c) that $\y_k-\x_k=\tau_k\p_k\in{\bf T}\mathcal{V}_k
\subset {\bf T}\mathcal{V}_{k+1}.$  Also,
$\nabla f(\x_k)=-\r_k=\beta_{k+1}\p_k-\p_{k+1}$ (by \eref{eq:pupd}), hence
$\nabla f(\x_k)\in {\bf T}\mathcal{V}_{k+1}$
Thus, ${\bf T}\mathcal{M}_{k+1}\subset{\bf T}\mathcal{V}_{k+1}$,
so showing $\mathcal{M}_{k+1}\subset\mathcal{V}_{k+1}$ is reduced to finding
a single common point, and we may take $\x_k$
to be this point.

For (b), $\x_{k+1}^{\rm CG}$ minimizes $f(\x)$ over $\mathcal{V}_{k+1}$
by Theorem~\ref{thm:hest}(a).
We also know that $\x_{k+1}^{\rm CG}\in \mathcal{M}_{k+1}$
because 
\begin{align*}
\x_{k+1}^{\rm CG}&=\x_k+\alpha_{k+1}\p_{k+1}  && \mbox{(by \eref{eq:cg.xupd})}\\
&=\x_k+\alpha_{k+1}(\r_k+\beta_{k+1}\p_k)  && \mbox{(by \eref{eq:pupd})}\\
&=\x_k+\alpha_{k+1}(-\nabla f(\x_k) + \beta_{k+1}\p_k) 
&& \mbox{(by Theorem~\ref{thm:hest}(b))}\\
&=\x_k+\alpha_{k+1}(-\nabla f(\x_k) + \beta_{k+1}(\y_k-\x_k)/\tau_k) &&
\mbox{(by induction, part (c))} \\
&\in \x_k+\Span\{\nabla f(\x_k), \y_k-\x_k\}.
\end{align*}
Since $\mathcal{M}_{k+1}\subset\mathcal{V}_{k+1}$ according to part (a), the
optimality of $\x_{k+1}^{\rm CG}$ with respect to $\mathcal{V}_{k+1}$ implies
that it is also optimal for $f(\x)$ with respect to $\mathcal{M}_{k+1}$,
hence $\x_{k+1}^{\rm IA}=\x_{k+1}^{\rm CG}.$  Thus, write $\x_{k+1}$ for 
both vectors for the remainder of the argument.

A similar argument establishes (c).  First, we observe
that $\x_{k+1}+\tau_{k+1}\p_{k+1}$ lies in $\mathcal{M}_{k+1}$ 
because $\x_{k+1}+\tau_{k+1}\p_{k+1}=\x_k+(\alpha_{k+1}+\tau_{k+1})\p_{k+1}$,
and the we can proceed as in the last paragraph except with 
$(\alpha_{k+1}+\tau_{k+1})$ playing the role of $\alpha_{k+1}$.  Next,
$\x_{k+1}+\tau_{k+1}\p_{k+1}$ minimizes $\Vert\x-\x^*\Vert$ over $\mathcal{V}_{k+1}$
by Theorem~\ref{thm:hest}(d).  Thus, $\x_{k+1}+\tau_{k+1}\p_{k+1}$ must
be the same as $\y_{k+1}$.  This concludes the induction.
\end{proof}

The surprising aspect of this analysis is that CG exactly
identifies $\mathcal{M}_k$ that appears in Algorithm IA despite
not ever computing $\y_k^{\rm IA}$.  The reason is that the line
$\aff\{\x_k,\y_k^{\rm IA}\}$ agrees with the
line  $\x_k+\Span\{\p_k\}$, which is computed by CG.

\section{A computable potential for linear conjugate gradient}
\label{sec:CGanalysis2}

The analysis in the preceding section shows that CG implements
the idealized algorithm.  However, the decrease in the potential
cannot be measured during the algorithm because \eref{eq:tau_k} requires
prior knowledge of the optimizer.  In this section, we observe
that the GD 
potential can also be used for CG, yielding
a computable potential.
An application of
this potential will be presented in Section~\ref{sec:hybrid}.   

We define an auxiliary sequence of vectors $\y_k$
using the formulas in GD.  This sequence is not the
true minimizer that occurs in IA
\eref{eq:ia2.yupd} and in
Theorem~\ref{thm:hest}, part (d). But nonetheless,
$\Vert \y_k-\x^*\Vert$ shrinks sufficiently fast
to establish the necessary decrease in the potential.

In particular, we exactly mimic the equations that
define the quantities
$\tilde\sigma_k$, $\tilde\rho_k$, $\lambda_k$, $\delta_k$, $\y_k$
in GD, and modify only $\x_k$ so that it is computed using
the CG algorithm instead of the GD algorithm.
The two same two theorems that held for GD also hold for CG:

\begin{theorem}
For each $k=0,1,2,\ldots$,
$$\Vert\x^*- \y_k\Vert^2 + \frac{2(f(\x_k)-f(\x^*))}{\ell}\le \tilde\sigma_k^2.$$
\end{theorem}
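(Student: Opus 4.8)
The plan is to reduce the CG statement to the already-established GD result by checking that the only algorithm-specific ingredient used in the proof of \eref{eq:sigma-ub} in Section~\ref{sec:GDanalysis} also holds for CG. First I would observe, by inspecting that proof, that it depends on the iterate $\x_{k-1}$ through only two facts: (i) strong convexity applied at $\x_{k-1}$, which yields $\x^*\in B(\oobx_{k-1},\rho_{k-1})$ via \eref{eq:gd.rdef0}; and (ii) the gradient-orthogonality relation \eref{eq:gradorth}, which is what produces \eref{eq:ykoobx} and hence the bound $\tilde\rho_{k-1}\le\delta_{k-1}$ in \eref{eq:gd.deltarhobd} underlying \eref{eq:gd.chain1}. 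Fact (i) holds for an arbitrary point and is therefore automatic. The base case (from \eref{eq:sigma0}) and the entire ``else'' branch ($\lambda_k=0$, handled by \eref{eq:gd.rdef2} and \eref{eq:sigmadef2}) likewise make no use of how $\x_k$ was produced. Consequently the whole argument transfers verbatim once I verify that \eref{eq:gradorth} holds for the CG iterates together with the GD-defined auxiliary sequence $\y_k$.

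So the crux is establishing $\nabla f(\x_{k-1})^T(\y_{k-1}-\x_{k-1})=0$ for CG, and the plan here is an auxiliary induction showing $\y_k\in\mathcal{V}_k$ for every $k$, where $\mathcal{V}_k$ is the Krylov affine set of Theorem~\ref{thm:hest}. The base case is immediate since $\y_0=\x_0\in\mathcal{V}_0$. For the step, $\y_k=(1-\lambda_k)\oobx_{k-1}+\lambda_k\y_{k-1}$ is an affine combination of two points of $\mathcal{V}_k$: indeed $\oobx_{k-1}=\x_{k-1}+\r_{k-1}/\ell\in\mathcal{V}_k$, because $\x_{k-1}\in\mathcal{V}_{k-1}\subset\mathcal{V}_k$ and $\r_{k-1}\in\Span\{\r_0,\ldots,\r_{k-1}\}={\bf T}\mathcal{V}_k$, while $\y_{k-1}\in\mathcal{V}_{k-1}\subset\mathcal{V}_k$ by the induction hypothesis. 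With $\y_k\in\mathcal{V}_k$ in hand, both $\x_{k-1}$ and $\y_{k-1}$ lie in $\mathcal{V}_{k-1}$, so $\y_{k-1}-\x_{k-1}\in{\bf T}\mathcal{V}_{k-1}$; since $\nabla f(\x_{k-1})=-\r_{k-1}$ is orthogonal to ${\bf T}\mathcal{V}_{k-1}$ by the optimality of $\x_{k-1}$ over $\mathcal{V}_{k-1}$ (Theorem~\ref{thm:hest}(b)--(c)), \eref{eq:gradorth} follows.

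Having secured \eref{eq:gradorth}, I would replay the induction of the GD proof of \eref{eq:sigma-ub} almost word for word. In the main case I make the same identifications into Lemma~\ref{lem:ballintersect2}, namely $\x=\oobx_{k-1}$, $\y=\y_{k-1}$, $\rho^2=\tilde\rho_{k-1}^2-\gamma_{k-1}$, $\sigma^2=\tilde\sigma_{k-1}^2-\gamma_{k-1}$, $\delta=\delta_{k-1}$, using fact (i) for $B(\x,\rho)$, the induction hypothesis for $B(\y,\sigma)$, and the freshly proved \eref{eq:gd.chain1} for the two $\delta$-inequalities the lemma requires; the lemma then gives $\x^*\in B(\y_k,\xi)$ with $\xi^2=(\tilde\xi_k^*)^2-\gamma_{k-1}=\tilde\sigma_k^2-\gamma_k$ by \eref{eq:gd.xik} and \eref{eq:sigmadef}, which is exactly the claim. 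The main obstacle, as I see it, is confined to the Krylov-membership induction of the second paragraph: one must be careful that the $\y_k$ appearing here is the GD-constructed auxiliary vector, \emph{not} the true minimizer $\x_k+\tau_k\p_k$ of Theorem~\ref{thm:hest}(d), so part (d) cannot be invoked directly and the containment $\y_k\in\mathcal{V}_k$ must be proved from scratch. Everything downstream is then a transcription of the GD argument, with $\gamma_k$ treated as an unknown-but-fixed quantity throughout.
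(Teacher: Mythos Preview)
Your proposal is correct and takes essentially the same approach as the paper: reduce to the GD proof by verifying \eref{eq:gradorth} for the CG iterate, which the paper also does via the claim $\y_k\in\mathcal{V}_k$ together with $\r_k\perp{\bf T}\mathcal{V}_k$. The only difference is that you supply an explicit induction for $\y_k\in\mathcal{V}_k$ (noting $\oobx_{k-1}=\x_{k-1}+\r_{k-1}/\ell\in\mathcal{V}_k$ and $\y_{k-1}\in\mathcal{V}_{k-1}$), whereas the paper asserts this membership without proof; you also correctly observe that \eref{eq:gammadiff} is needed only for the companion decrease theorem, not for this bound.
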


\begin{theorem}
For each $k=1,2,\ldots$,
$$\tilde\sigma_{k}^2\le \left(1-\sqrt{\frac{\ell}{L}}\right)\tilde\sigma_{k-1}^2.$$
\end{theorem}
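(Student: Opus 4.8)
The plan is to observe that the potential $\tilde\sigma_k$ for CG is built from exactly the same recurrences---\eref{eq:gd.lambda1}, \eref{eq:gd.xik}, \eref{eq:sigmadef}, \eref{eq:sigmadef2}---as for GD, so that the proof of the decrease bound \eref{eq:sigma-decr} given in Section~\ref{sec:GDanalysis} transfers essentially verbatim, provided its two structural prerequisites are re-established in the CG setting. Inspecting that proof, one sees it uses only (i) the bound \eref{eq:gammadiff}, $\gamma_k-\gamma_{k-1}\le -\Vert\nabla f(\x_{k-1})\Vert^2/(L\ell)$, and (ii) the inequality \eref{eq:gd.deltarhobd}, $\tilde\rho_{k-1}\le\delta_{k-1}$ (which supplies the chain \eref{eq:gd.chain1} in the first case). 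Neither fact depends on $\x_k$ being produced by a dogleg line search rather than by a CG step, so the entire two-case argument goes through once (i) and (ii) hold for CG.

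For (i), I would recall that $\nabla f(\x_{k-1})=-\r_{k-1}$ by Theorem~\ref{thm:hest}(c), so $\obx_{k-1}=\x_{k-1}+\r_{k-1}/L$. Since $\x_{k-1}\in\mathcal{V}_{k-1}\subset\mathcal{V}_k$ and $\r_{k-1}\in{\bf T}\mathcal{V}_k=\Span\{\r_0,\ldots,\r_{k-1}\}$, the point $\obx_{k-1}$ lies in $\mathcal{V}_k$. As $\x_k$ minimizes $f$ over $\mathcal{V}_k$ by Theorem~\ref{thm:hest}(b), it follows that $f(\x_k)\le f(\obx_{k-1})$; combining this with the standard smoothness descent bound $f(\obx_{k-1})\le f(\x_{k-1})-\Vert\nabla f(\x_{k-1})\Vert^2/(2L)$ reproduces \eref{eq:gammadiff} exactly as in GD.

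For (ii), which via \eref{eq:ykoobx} reduces to the orthogonality \eref{eq:gradorth} at index $k-1$, namely $\nabla f(\x_{k-1})^T(\y_{k-1}-\x_{k-1})=0$, the key step is the auxiliary claim that the GD-defined vector $\y_j$ lies in $\mathcal{V}_j$ for every $j$. I would prove this by induction: $\y_0=\x_0\in\mathcal{V}_0$, and assuming $\y_{j-1}\in\mathcal{V}_{j-1}\subset\mathcal{V}_j$, the update \eref{eq:gd.yupd} writes $\y_j$ as an affine combination of $\y_{j-1}$ and $\oobx_{j-1}=\x_{j-1}+\r_{j-1}/\ell$, both of which lie in the affine set $\mathcal{V}_j$ (the latter exactly as argued for $\obx_{k-1}$ above), whence $\y_j\in\mathcal{V}_j$. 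Given this, $\y_{k-1}-\x_{k-1}\in{\bf T}\mathcal{V}_{k-1}$, and since $\x_{k-1}$ minimizes $f$ over $\mathcal{V}_{k-1}$ its gradient is orthogonal to ${\bf T}\mathcal{V}_{k-1}$; this is precisely \eref{eq:gradorth}, and hence \eref{eq:ykoobx} and \eref{eq:gd.deltarhobd} follow.

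With (i) and (ii) in hand I would simply run the proof of \eref{eq:sigma-decr}: in the case where \eref{eq:gd.ifstmt} holds, \eref{eq:gd.chain1} is recovered from \eref{eq:gd.deltarhobd} and the case hypothesis exactly as before, after which the unimodality estimate for $(\tilde\xi_k^*)^2$ together with $a^2+b^2\ge 2ab$ yields the factor $1-\sqrt{\ell/L}$; in the complementary case one uses \eref{eq:sigmadef2}, \eref{eq:gammadiff}, and the elementary inequality $(1-\eps)/2\le 1-\sqrt{\eps}$. I expect the main obstacle to be conceptual rather than computational: here $\y_k$ is the auxiliary GD-style sequence and \emph{not} the true nearest point to $\x^*$ of Theorem~\ref{thm:hest}(d), so \eref{eq:gradorth} cannot be quoted from the CG analysis of Section~\ref{sec:CGanalysis} and must instead be recovered from the $\y_j\in\mathcal{V}_j$ induction above.
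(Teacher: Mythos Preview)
Your proposal is correct and follows essentially the same approach as the paper: identify that the GD proof of \eref{eq:sigma-decr} carries over once \eref{eq:gradorth} and \eref{eq:gammadiff} are re-established for CG, then verify those two ingredients via $\y_j\in\mathcal{V}_j$ and $\obx_{k-1}\in\mathcal{V}_k$ together with the optimality of $\x_k$ over $\mathcal{V}_k$. Your explicit induction for $\y_j\in\mathcal{V}_j$ and your remark distinguishing the auxiliary $\y_k$ from the Theorem~\ref{thm:hest}(d) minimizer spell out points the paper leaves implicit, but the argument is the same.
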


The following observations about $\x_k$
computed in CG show that the same proofs of the
previous theorems work for CG.

First, $\y_{k}\in\mathcal{V}_{k}$, whereas $\r_{k}$ is orthogonal
to ${\bf T}\mathcal{V}_{k}$ (a well known property of CG), and thus 
$\nabla f(\x_{k})^T(\y_{k}-\x_{k})=0$.
This is \eref{eq:gradorth}, which was a necessary ingredient
in the proof of GD.

Second, \eref{eq:gammadiff} still holds because 
the CG step from $\x_{k-1}$ to $\x_k$ improves
$f$ at least as much as the step from  $\x_{k-1}$
to $\obx_{k-1}$, since $\obx_k$ lies in the Krylov space 
$\mathcal{V}_k$
where $\x_k$ is optimal for $f$ over this space.
 
\section{Accelerated gradient}
\label{sec:AGanalysis}

The Accelerated Gradient (AG) algorithm of Nesterov is an even looser approximation
to IA than GD in the sense that there is no optimization subproblem per iteration;
instead, all step lengths are fixed.   
For this section, let us define
\begin{equation}
\kappa = \frac{L}{\ell},
\label{eq:kappadef}
\end{equation}
because this ratio, sometimes called the
{\em condition number} of $f$, is used often throughout the algorithm and
analysis.

The algorithm is as follows.  

\begin{align}
& \mbox{\bf Accelerated Gradient} \notag \\
& \x_0:=\mbox{arbitrary} \notag \\
& \w_0:=\x_0 \notag \\
& \mbox{for } k:=1,2,\ldots, \notag \\
& \hphantom{\mbox{for }} \x_k := \w_{k-1}-\nabla f(\w_{k-1})/L \label{eq:wupd} \\
& \hphantom{\mbox{for }} \w_k := \x_k + \theta(\x_k-\x_{k-1})\mbox { where } \label{eq:ag.xupd} \\
&  \hphantom{\mbox{forforforfor}} \displaystyle \theta=
       \frac{\sqrt{\kappa}-1}{\sqrt{\kappa}+1}. \label{eq:thetadef} \\
&\mbox{end} \notag
\end{align}

Note that some versions of AG in the literature
vary the choice of $\theta$
(e.g., see \cite{Nesterov:book}).

For the purpose of analysis, let us define the following auxiliary sequences
of vectors and scalars:
\begin{align}
\oobw_k &= \w_k-\nabla f(\w_k)/\ell, && (k=0,1,\ldots)
\label{eq:ag.oobw} \\
\y_0 &=\x_0, \notag \\
\y_k &= \x_k+\tau(\x_k-\x_{k-1}),
   && (k=1,2, \ldots) \label{eq:ag.ydef} \\
\tilde\sigma_0 &= \sqrt{2}\Vert\nabla f(\x_0)\Vert /\ell, \label{eq:ag.tsigma0}\\
\tilde\sigma_{k+1} &= 
\bigg[(1-\kappa^{-1/2})\tilde\sigma_{k}^2+ \frac{2(f(\x_{k+1})-f(\w_{k}))}{\ell}
\notag\\
&\hphantom{=}
\quad\mbox{}+\frac{\Vert\nabla f(\w_{k})\Vert^2}{L\ell}-(\kappa^{1/2}-\kappa^{-1/2})\Vert\w_{k}-\x_{k}\Vert^2\bigg]^{1/2}
&& (k=0,1,2,\ldots) \label{eq:ag.tsigma}
\end{align}
where
\begin{equation}
\tau= \sqrt{\kappa}-1.
\label{eq:taudef}
\end{equation}

We prove two main results about these
scalars.  The first shows that
$\tilde\sigma_k$ is decreasing at the appropriate
rate, while the second shows that it is
an upper bound on the distance to the optimizer.

\begin{theorem}
For each $k=0,1,2,\ldots,$
\begin{equation}
\tilde\sigma_{k+1}^2 \le (1-\kappa^{-1/2})\tilde\sigma_{k}^2.
\label{eq:agthm1}
\end{equation}
\end{theorem}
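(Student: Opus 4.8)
The plan is to prove the theorem directly from the definition \eref{eq:ag.tsigma}, which is almost tailor-made for this inequality. Observe that the recursion for $\tilde\sigma_{k+1}^2$ already contains the term $(1-\kappa^{-1/2})\tilde\sigma_k^2$, so the claim \eref{eq:agthm1} is equivalent to showing that the remaining three terms sum to something nonpositive, namely
\begin{equation*}
\frac{2(f(\x_{k+1})-f(\w_k))}{\ell}
+\frac{\Vert\nabla f(\w_k)\Vert^2}{L\ell}
-(\kappa^{1/2}-\kappa^{-1/2})\Vert\w_k-\x_k\Vert^2
\le 0.
\end{equation*}
So the entire proof reduces to establishing this single scalar inequality.

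For the first term, I would use the steepest-descent step \eref{eq:wupd}, which says $\x_{k+1}=\w_k-\nabla f(\w_k)/L$. Applying the upper bound in the definition of smoothness \eref{eq:strconvdef} to the pair $\w_k,\x_{k+1}$ gives the standard descent estimate
$$
f(\x_{k+1})\le f(\w_k)-\frac{\Vert\nabla f(\w_k)\Vert^2}{2L}.
$$
Substituting this into the first term and combining with the second term $\Vert\nabla f(\w_k)\Vert^2/(L\ell)$, the two gradient-norm contributions cancel exactly: the $-2/(2L\ell)$ coefficient from the descent bound is cancelled by the $+1/(L\ell)$ coefficient. Thus the whole left-hand side is bounded above by just $-(\kappa^{1/2}-\kappa^{-1/2})\Vert\w_k-\x_k\Vert^2$, which is manifestly nonpositive since $\kappa=L/\ell\ge 1$ forces $\kappa^{1/2}\ge\kappa^{-1/2}$. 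The base case $k=0$ uses $\w_0=\x_0$, so $\Vert\w_0-\x_0\Vert=0$ and the argument still goes through.

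The only subtlety I anticipate is bookkeeping: confirming that the coefficients on the two $\Vert\nabla f(\w_k)\Vert^2$ terms genuinely cancel rather than merely nearly cancel. This is the step to double-check, but it appears to be an exact cancellation by design of \eref{eq:ag.tsigma}, so I do not expect a real obstacle. One should also verify that the quantity inside the square root in \eref{eq:ag.tsigma} is nonnegative so that $\tilde\sigma_{k+1}$ is well-defined; this follows a posteriori once the companion theorem (the lower-bound property) is in place, or can be argued inductively from $\tilde\sigma_{k+1}^2\le(1-\kappa^{-1/2})\tilde\sigma_k^2$ together with nonnegativity of $\tilde\sigma_k^2$. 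In short, this theorem is essentially a one-line consequence of the $L$-smoothness descent inequality combined with the careful choice of the potential recursion, and the main work is simply organizing the cancellation cleanly.
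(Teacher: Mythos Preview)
Your proposal is correct and follows essentially the same argument as the paper: reduce \eref{eq:agthm1} via the definition \eref{eq:ag.tsigma} to the three-term inequality, drop the manifestly nonpositive $-(\kappa^{1/2}-\kappa^{-1/2})\Vert\w_k-\x_k\Vert^2$ term, and use the standard $L$-smoothness descent bound $f(\x_{k+1})\le f(\w_k)-\Vert\nabla f(\w_k)\Vert^2/(2L)$ from \eref{eq:wupd} to get exact cancellation of the gradient-norm terms. The paper's proof is equally brief and uses exactly these steps.
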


\begin{proof}
By squaring both sides of \eref{eq:ag.tsigma}, it is apparent that
\eref{eq:agthm1} reduces to showing:
$$\frac{2(f(\x_{k+1})-f(\w_{k}))}{\ell} +
\frac{\Vert\nabla f(\w_{k})\Vert^2}{L\ell}-(\kappa^{1/2}-\kappa^{-1/2})\Vert\w_{k}-\x_{k}\Vert^2\le 0.$$
Clearly it suffices to show
$$\frac{2(f(\x_{k+1})-f(\w_{k}))}{\ell} +
\frac{\Vert\nabla f(\w_{k})\Vert^2}{L\ell}\le 0.$$
This follows immediately from \eref{eq:wupd}, which implies
that $f(\x_{k+1})\le f(\w_k)-\Vert\nabla f(\w_k)\Vert^2/(2L)$.
\end{proof}

\begin{theorem}
For each $k=0,1,2\ldots$,
\begin{equation}
\Vert\y_k-\x^*\Vert^2 +\frac{2(f(\x_k)-f(\x^*))}{\ell}\le
\tilde\sigma_k^2.
\label{eq:ag.tsigmabd1}
\end{equation}
\end{theorem}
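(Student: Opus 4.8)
The plan is to prove \eref{eq:ag.tsigmabd1} by induction on $k$, mirroring the two-ball geometric argument used for IA in Theorem~\ref{thm:iacvg} and for GD, but now with the gradient ball centered at the momentum point $\w_k$ rather than at $\x_k$. For the base case $k=0$ I would argue exactly as in the GD base case: since $\y_0=\x_0$ and $\tilde\sigma_0^2 = 2\Vert\nabla f(\x_0)\Vert^2/\ell^2$ by \eref{eq:ag.tsigma0}, the rearrangement \eref{eq:strcvx1} of strong convexity gives $\Vert\x_0-\x^*\Vert^2 + 2(f(\x_0)-f(\x^*))/\ell \le 2\nabla f(\x_0)^T(\x_0-\x^*)/\ell$, and Cauchy--Schwarz together with $\Vert\x_0-\x^*\Vert\le\Vert\nabla f(\x_0)\Vert/\ell$ closes it.

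For the inductive step, assuming the inequality at $k$, I would exhibit two balls containing $\x^*$. First, repeating the manipulation leading to \eref{eq:rdef0} with $\w_k$ in place of $\x_k$ shows $\x^*\in B(\oobw_k,\rho)$ with $\rho^2 = \Vert\nabla f(\w_k)\Vert^2/\ell^2 - 2(f(\w_k)-f(\x^*))/\ell$. Second, writing $\sigma=\Vert\y_k-\x^*\Vert$, the induction hypothesis gives $\sigma^2 \le \tilde\sigma_k^2 - 2(f(\x_k)-f(\x^*))/\ell$. The decisive observation — the step I expect to be the crux — is the algebraic identity
\[
\y_{k+1} = (1-\kappa^{-1/2})\y_k + \kappa^{-1/2}\oobw_k,
\]
i.e.\ that the auxiliary point $\y_{k+1}$ lies on the segment joining the two ball centers at the specific weight $\lambda=1-\kappa^{-1/2}$. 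This follows by substituting \eref{eq:wupd}, \eref{eq:ag.xupd}, \eref{eq:ag.ydef}, \eref{eq:ag.oobw} and simplifying with $\theta=(\sqrt\kappa-1)/(\sqrt\kappa+1)$ and $\tau=\sqrt\kappa-1$, using the collinearity relation $\y_k-\w_k = \sqrt\kappa(\w_k-\x_k)$. Producing this identity with exactly the right $\lambda$ is what makes the geometric lemma applicable.

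Given the identity, I would apply Lemma~\ref{lem:ballintersect1} with $\x=\oobw_k$, $\y=\y_k$, $\lambda=1-\kappa^{-1/2}$ and $\delta=\Vert\oobw_k-\y_k\Vert$, so that $\z=\y_{k+1}$ and, since $\x^*$ lies in both balls,
\[
\Vert\y_{k+1}-\x^*\Vert^2 \le \kappa^{-1/2}\rho^2 + (1-\kappa^{-1/2})\sigma^2 - (1-\kappa^{-1/2})\kappa^{-1/2}\Vert\oobw_k-\y_k\Vert^2.
\]
Then I would expand $\Vert\oobw_k-\y_k\Vert^2 = \kappa\Vert\w_k-\x_k\Vert^2 + (2\sqrt\kappa/\ell)(\w_k-\x_k)^T\nabla f(\w_k) + \Vert\nabla f(\w_k)\Vert^2/\ell^2$. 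The one non-routine estimate is the cross term, which I would bound using \emph{strong} convexity of modulus $\ell$, namely $(\w_k-\x_k)^T\nabla f(\w_k) \ge f(\w_k)-f(\x_k) + (\ell/2)\Vert\w_k-\x_k\Vert^2$. Plain convexity leaves the wrong coefficient on $\Vert\w_k-\x_k\Vert^2$; it is precisely the quadratic term of strong convexity that upgrades the coefficient from $\kappa^{1/2}-1$ to the value $\kappa^{1/2}-\kappa^{-1/2}$ appearing in \eref{eq:ag.tsigma}.

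Finally I would collect terms. Inserting $\rho^2$, the induction bound on $\sigma^2$, and the cross-term estimate, the $\Vert\nabla f(\w_k)\Vert^2/\ell^2$ contributions combine (using $\kappa\ell^2=L\ell$) into $\Vert\nabla f(\w_k)\Vert^2/(L\ell)$, the $\Vert\w_k-\x_k\Vert^2$ contributions combine into $-(\kappa^{1/2}-\kappa^{-1/2})\Vert\w_k-\x_k\Vert^2$, and the $\tilde\sigma_k^2$ term becomes $(1-\kappa^{-1/2})\tilde\sigma_k^2$; after the $(f(\x_k)-f(\x^*))$ offsets from $\sigma^2$ and the cross term cancel against each other, the remaining function-value terms reduce to $-2(f(\w_k)-f(\x^*))/\ell$. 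Adding $2(f(\x_{k+1})-f(\x^*))/\ell$ to both sides then turns the left side into the target potential and the right side into exactly $\tilde\sigma_{k+1}^2$ by \eref{eq:ag.tsigma}, completing the induction. This last bookkeeping is routine once the strong-convexity estimate is in place; the genuine content is the affine-combination identity and the choice to use strong rather than weak convexity on the cross term.
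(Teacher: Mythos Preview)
Your proposal is correct and follows essentially the same approach as the paper: the same base case via strong convexity and Cauchy--Schwarz, the same affine-combination identity $\y_{k+1}=\kappa^{-1/2}\oobw_k+(1-\kappa^{-1/2})\y_k$, the same application of Lemma~\ref{lem:ballintersect1} with $\lambda=1-\kappa^{-1/2}$, and the same use of strong (not merely weak) convexity to bound the cross term $(\w_k-\x_k)^T\nabla f(\w_k)$ so that the $\Vert\w_k-\x_k\Vert^2$ coefficient comes out as $\kappa^{1/2}-\kappa^{-1/2}$. The only cosmetic difference is that the paper packages the strong-convexity lower bound on $\Vert\oobw_k-\y_k\Vert^2$ into a named $\delta^2$ before invoking the lemma, whereas you apply the lemma with the exact distance and insert the bound afterward; the arithmetic is identical.
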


\begin{proof}
The proof of \eref{eq:ag.tsigmabd1} is by induction on $k$.  
We start by deriving some preliminary relationships.
It is clear from \eref{eq:ag.xupd} and \eref{eq:ag.ydef} that $\w_k,\x_k,\y_k$
are collinear and the tangent to their common line is $\x_k-\x_{k-1}$, hence
we easily obtain from these equations:
\begin{align}
\y_k&=\x_k+\frac{\tau}{\theta}(\w_k-\x_{k}) &&\mbox{(by \eref{eq:ag.xupd} and \eref{eq:ag.ydef})} \label{eq:wxy0} \\
&=\x_k+(\sqrt{\kappa}+1)(\w_k-\x_k) &&\mbox{(by \eref{eq:thetadef} and \eref{eq:taudef})}\notag \\
&=(\sqrt{\kappa}+1)\w_k - \sqrt{\kappa}\x_k. \label{eq:wxy}
\end{align}

For the $k=0$ case,
\eref{eq:ag.tsigmabd1} follows from the initialization in
\eref{eq:ag.tsigma0} and strong convexity.

We now assume the result \eref{eq:ag.tsigmabd1} holds for
$k$ and establish it for $k=1$.
The proof relies on Lemma~\ref{lem:ballintersect1}, so first we must
argue that $\y_{k+1}$ lies on the
line segment between $\y_k$ and $\oobw_k.$  This is the content
of the following derivation:
\begin{align}
  \y_{k+1} &= \x_{k+1}+(\sqrt{\kappa}-1)(\x_{k+1}-\x_k) &&\mbox{(by \eref{eq:ag.ydef}
and \eref{eq:taudef})} \notag \\
&=\sqrt{\kappa}\x_{k+1}-(\sqrt{\kappa}-1)\x_k \notag \\
&=\sqrt{\kappa}\w_k -(\sqrt{\kappa}-1)\x_k- \sqrt{\kappa}\nabla f(\w_k)/L 
&& \mbox{(by \eref{eq:wupd})} \notag \\
&=\sqrt{\kappa}\w_k -(\sqrt{\kappa}-1)\x_k- \kappa^{-1/2}\nabla f(\w_k)/\ell 
&& \mbox{(by \eref{eq:kappadef})} \notag \\
&=\kappa^{-1/2}\w_k+(1-\kappa^{-1/2})((\sqrt{\kappa}+1)\w_k
-\sqrt{\kappa}\x_k) \notag \\
&\hphantom{=}\quad\mbox{}
- \kappa^{-1/2}\nabla f(\w_k)/\ell  \notag \\
&=\kappa^{-1/2}\w_k+(1-\kappa^{-1/2})\y_k- \kappa^{-1/2}\nabla f(\w_k)/\ell  &&\mbox{(by \eref{eq:wxy})} \notag\\
& = \kappa^{-1/2}\oobw_k + (1-\kappa^{-1/2})\y_k. &&\mbox{(by \eref{eq:ag.oobw})}
\label{eq:ag.lambdadef}
\end{align}

We take
$\x$, $\y$ appearing in  Lemma~\ref{lem:ballintersect1} to be
$\oobw_k$, $\y_k$ respectively.
Next we need to define $\delta,\rho,\sigma$ to be used
in Lemma.
In the case of $\rho$ and 
we copy the definitions used in the analysis of IA:
\begin{align}
\Vert\oobw_k-\x^*\Vert^2 &\le 
\frac{\Vert\nabla f(\w_k)\Vert^2}{\ell^2} - \frac{2(f(\w_k)-f(\x^*))}
{\ell} && \mbox{(by \eref{eq:rdef0})} \notag\\
&\equiv \rho^2. \label{eq:ag.rhodef}
\end{align}

For $\sigma$, we use the induction hypothesis:
\begin{align}
\Vert\y_k-\x^*\Vert^2&\le \tilde\sigma_k^2 - \frac{2 (f(\x_k)-f(\x^*))}{\ell} \notag\\
&\equiv \sigma^2. \label{eq:ag.sigmadef}
\end{align}

In the case of $\delta$, we have:
\begin{align}
\Vert \oobw_k-\y_k\Vert^2 &= 
\Vert \w_k-\nabla f(\w_k)/\ell - 
(\sqrt{\kappa}+1)\w_k+\sqrt{\kappa}\x_k)\Vert^2 &&
\mbox{(by \eref{eq:ag.oobw} and \eref{eq:wxy})} \notag \\
& = \Vert \sqrt{\kappa}(\x_k-\w_k)-\nabla f(\w_k)/\ell\Vert^2 \notag\\
&=\kappa\Vert\w_k-\x_k\Vert^2 + \frac{2\sqrt{\kappa}(\w_k-\x_k)^T\nabla f(\w_k)}
{\ell} \notag \\
&\hphantom{=}\quad\mbox{}
+ \frac{\Vert\nabla f(\w_k)\Vert^2}{\ell^2} \notag\\
&\ge 
\kappa\Vert\w_k-\x_k\Vert^2 + \frac{2\sqrt{\kappa}(f(\w_k)-f(\x_k))}{\ell} \notag\\
&\hphantom{=}\quad\mbox{}
+\sqrt{\kappa}\Vert\w_k-\x_k\Vert^2
+ \frac{\Vert\nabla f(\w_k)\Vert^2}{\ell^2}  && \mbox{(by strong convexity)} \notag\\
&=(\kappa + \sqrt{\kappa})
\Vert\w_k-\x_k\Vert^2 + \frac{2\sqrt{\kappa}(f(\w_k)-f(\x_k))}{\ell} \\
&\hphantom{=}\quad\mbox{}
+ \frac{\Vert\nabla f(\w_k)\Vert^2}{\ell^2} \notag\\
&\equiv \delta^2. \label{eq:ag.deltadef}
\end{align}

From \eref{eq:ag.lambdadef}, $\lambda = 1-\kappa^{-1/2}$
(and hence $\lambda(1-\lambda)=\kappa^{-1/2}-\kappa^{-1}$).
Finally, by Lemma~\ref{lem:ballintersect1},
\begin{align*}
\Vert\y_{k+1}-\x^*\Vert^2 &\le \kappa^{-1/2}\rho^2+(1-\kappa^{-1/2})\sigma^2
-(\kappa^{-1/2}-\kappa^{-1})\delta^2 \\
&=
\frac{\Vert\nabla f(\w_k)\Vert^2}{L\ell}-\frac{2(f(\w_k)-f(\x^*))}{\ell}
+(1-\kappa^{-1/2})\tilde\sigma_k^2-(\kappa^{1/2}-\kappa^{-1/2})\Vert\w_k-\x_k\Vert^2 \\
&=
\tilde\sigma_{k+1}^2-\frac{2(f(\x_{k+1})-f(\x^*))}{\ell},
\end{align*}
thus completing the induction.  The second line was obtained 
by substituting \eref{eq:ag.rhodef}, \eref{eq:ag.sigmadef}, \eref{eq:ag.deltadef}
in the first line
followed by cancellation of like terms.  The third line was obtained
from \eref{eq:ag.tsigma}.
\end{proof}

\section{Relationship between IA and AG}
\label{sec:AGanalysis2}

The relationship between the idealized algorithm and AG is weaker than
that between IA and either GD or CG because AG does not solve any
optimization subproblems and instead takes fixed stepsizes.
Thus, at best it is an approximation to IA.  Furthermore,
the computations of $\x_k$ and $\y_k$ are more closely tied together,
making it unclear whether any kind of induction hypothesis can be
applied to either in isolation. For these reasons, we propose the following
theorem characterizing the AG--IA relationship.

\begin{theorem}
  Suppose one is given points $\x_{k-1},\y_{k-1}\in\R^n$,
  a nonzero vector $\g\in\R^n$, $n\ge 3$, and
  scalars $\ell,L$ such that $L>\ell>0$.
  For the remainder of this discussion, define $\kappa=L/\ell$
  and
  \begin{equation}
    \w_{k-1}=\frac{\sqrt{\kappa}}{\sqrt{\kappa}+1} \x_{k-1}+\frac{1}{\sqrt{\kappa+1}}\y_{k-1},
    \label{eq;wxy3}
  \end{equation}
  as in \eref{eq:wxy0}.
  Assume further (these assumption will be explained later) that
  \begin{equation}
    \Vert \x_{k-1}-\w_{k-1}\Vert \le \Vert\g\Vert/L,
    \label{eq:assum5}
  \end{equation}
  and
  \begin{equation}
    (\x_{k-1}-\w_{k-1})^T\g=0.
    \label{eq:assum6}
  \end{equation}
  Let
  \begin{align*}
    \mathcal{F} = \{f:\R^n\rightarrow\R: & \mbox{$\nabla f$ is $L$-Lipschitz}, \\
    & \mbox{$f$ is strongly convex with modulus $\ell$}, \\
    &\nabla f(\w_{k-1})=\g\}.
  \end{align*}
  Define
  \begin{equation}
  q(\x,\y)=\sup_{f\in\mathcal{F}}\frac{(2/\ell)(f(\x)-\min\{f\})+\Vert\y-\argmin\{f\}\Vert^2}
  {(2/\ell)(f(\x_{k-1})-\min\{f\})+\Vert\y_{k-1}-\argmin\{f\}\Vert^2}.
  \label{eq:qdef}
  \end{equation}
  Here $\min\{f\}$ and $\argmin\{f\}$ are short-hand for
  $\min\{f(\x):\x\in\R^n\}$ and $\argmin\{f(\x):\x\in\R^n\}$ respectively.
  Define $\mathcal{M}_k=\w_{k-1}+\Span\{\g,\x_{k-1}-\y_{k-1}\}$.
  Let $\x_k,\y_k$ be the vectors computed by the AG
  algorithm for this data (which lie in $\mathcal{M}_k$).   Then
  \begin{equation}
    q(\x_k,\y_k)\le 1-\kappa^{-1/2},
    \label{eq:agbound1}
  \end{equation}
  and
  \begin{equation}
    \min\{q(\x,\y):\x,\y\in\mathcal{M}_k\}\ge 1-\kappa^{-1/2}-O(\kappa^{-1}).
    \label{eq:agbound2}
  \end{equation}
\end{theorem}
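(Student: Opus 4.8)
I would prove the two bounds separately, treating \eref{eq:agbound1} as a one-step version of the accelerated-gradient analysis of Section~\ref{sec:AGanalysis}, and \eref{eq:agbound2} as a matching worst-case construction in the spirit of the second theorem of Section~\ref{sec:GDanalysis2}. For \eref{eq:agbound1} I would fix an arbitrary $f\in\mathcal{F}$, write $\x^*=\argmin\{f\}$, and show that the numerator $N=(2/\ell)(f(\x_k)-f(\x^*))+\Vert\y_k-\x^*\Vert^2$ and denominator $D=(2/\ell)(f(\x_{k-1})-f(\x^*))+\Vert\y_{k-1}-\x^*\Vert^2$ of \eref{eq:qdef} obey $N\le(1-\kappa^{-1/2})D$; taking the supremum over $f$ then gives the claim. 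Setting $\oobw_{k-1}=\w_{k-1}-\g/\ell$ and copying the radii \eref{eq:ag.rhodef}, \eref{eq:ag.sigmadef}, \eref{eq:ag.deltadef}, one has $\x^*\in B(\oobw_{k-1},\rho)\cap B(\y_{k-1},\sigma)$ with $\sigma=\Vert\y_{k-1}-\x^*\Vert$ and $\delta=\Vert\oobw_{k-1}-\y_{k-1}\Vert$. The role of \eref{eq:assum6} is to make $\x_{k-1}-\w_{k-1}$ orthogonal to $\g$; since $\w_{k-1},\x_{k-1},\y_{k-1}$ are collinear, Pythagoras then turns \eref{eq:ag.deltadef} into the exact identity $\delta^2=\kappa\Vert\w_{k-1}-\x_{k-1}\Vert^2+\Vert\g\Vert^2/\ell^2$.

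Next I would record, exactly as in \eref{eq:ag.lambdadef}, that the AG point $\y_k$ equals $\kappa^{-1/2}\oobw_{k-1}+(1-\kappa^{-1/2})\y_{k-1}$, so Lemma~\ref{lem:ballintersect1} with $\lambda=1-\kappa^{-1/2}$ gives $\Vert\y_k-\x^*\Vert^2\le\kappa^{-1/2}\rho^2+(1-\kappa^{-1/2})\sigma^2-(\kappa^{-1/2}-\kappa^{-1})\delta^2$. Adding $(2/\ell)(f(\x_k)-f(\x^*))$ and substituting the three radii, the $\Vert\g\Vert^2$ terms collapse to $\Vert\g\Vert^2/(L\ell)$ while $(1-\kappa^{-1/2})\sigma^2$ reproduces the $\Vert\y_{k-1}-\x^*\Vert^2$ part of $(1-\kappa^{-1/2})D$; after the $f(\x^*)$ contributions cancel, and after using the descent inequality $f(\x_k)\le f(\w_{k-1})-\Vert\g\Vert^2/(2L)$, the target $N\le(1-\kappa^{-1/2})D$ reduces to $(2/\ell)(1-\kappa^{-1/2})(f(\w_{k-1})-f(\x_{k-1}))\le(\kappa^{1/2}-1)\Vert\w_{k-1}-\x_{k-1}\Vert^2$. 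This holds because its right side is nonnegative while its left side is nonpositive: strong convexity together with \eref{eq:assum6} (which kills the first-order term) gives $f(\w_{k-1})-f(\x_{k-1})\le-\tfrac\ell2\Vert\w_{k-1}-\x_{k-1}\Vert^2$. Notably \eref{eq:assum5} is not used in this half.

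For \eref{eq:agbound2} I would use the minimax inequality $\min_{\x,\y}\sup_f\ge\sup_f\min_{\x,\y}$ to reduce the problem to exhibiting one $f\in\mathcal{F}$ with $\min_{\x,\y\in\mathcal{M}_k}N/D\ge1-\kappa^{-1/2}-O(\kappa^{-1})$; since $\x$ and $\y$ occur in separate terms of \eref{eq:qdef}, the inner minimum decouples as $(2/\ell)(\min_{\mathcal{M}_k}f-\min\{f\})+\mathrm{dist}(\x^*,\mathcal{M}_k)^2$. To build a near-extremal $f$, I would mimic the out-of-plane displacement of Section~\ref{sec:GDanalysis2}: in coordinates with $\w_{k-1}=\bz$, $\g=\Vert\g\Vert\e_1$ and $\x_{k-1}=h\e_2$ (admissible by \eref{eq:assum6}), take $f$ quadratic with Hessian $H$ equal to $\ell$ on $\e_2,\e_4,\ldots,\e_n$ and, on $\Span\{\e_1,\e_3\}$, having eigenvalues $L,\ell$ along directions tilted by a small angle $\phi$ out of $\mathcal{M}_k$. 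The tilt forces $\x^*=-\Vert\g\Vert H^{-1}\e_1$ to acquire an $\e_3$-component, pushing $\argmin\{f\}$ toward the extreme point of $B(\oobw_{k-1},\rho)\cap B(\y_{k-1},\sigma)$ at which Lemma~\ref{lem:ballintersect2} is tight; here \eref{eq:assum5} is what I expect to guarantee that this lens is nondegenerate (i.e.\ $\delta\ge\sqrt{|\rho^2-\sigma^2|}$) so that the construction is admissible.

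The hard part is the final estimate. I must choose $\phi$ and show that $(2/\ell)(\min_{\mathcal{M}_k}f-\min\{f\})+\mathrm{dist}(\x^*,\mathcal{M}_k)^2$ lies within $O(\kappa^{-1})D$ of $(1-\kappa^{-1/2})D$, which in particular forces me to control the gap $f(\x_k)-\min_{\mathcal{M}_k}f$ left open by AG's fixed gradient step. The obstruction is that the inequalities saturated in the upper bound pull $H$ in incompatible directions: the descent step is tight only when $\g$ is an $L$-eigenvector (no tilt), whereas displacing $\x^*$ out of $\mathcal{M}_k$ requires tilting $\g$ away from the eigenbasis, and the strong-convexity bound defining $\rho$ wants curvature $\ell$ along $\w_{k-1}-\x^*$. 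Because $\mathcal{F}$ insists on an $L$-Lipschitz gradient, the sharp-cornered functions of Section~\ref{sec:GDanalysis2} are unavailable and no admissible $f$ saturates all of these at once; optimizing $\phi$ trades the residuals against one another. I expect the remaining work to be a second-order expansion in $\phi$ (equivalently in $\kappa^{-1/2}$) confirming that the net loss is exactly of order $\kappa^{-1}$, which is what \eref{eq:agbound2} asserts.
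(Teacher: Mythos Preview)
Your treatment of \eref{eq:agbound1} is sound and essentially matches what the paper does (it simply cites \cite{KarimiVavasis2016}, whose argument is the one-step accelerated-gradient estimate you reproduce).  One minor remark: you lean on \eref{eq:assum6} to get the exact identity $\delta^2=\kappa\Vert\w_{k-1}-\x_{k-1}\Vert^2+\Vert\g\Vert^2/\ell^2$, whereas the analysis in Section~\ref{sec:AGanalysis} obtains the same conclusion without that hypothesis by using strong convexity to lower-bound $\delta^2$ as in \eref{eq:ag.deltadef}.  Either route works here.

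For \eref{eq:agbound2} your high-level plan (swap min and sup, exhibit a single near-extremal $f\in\mathcal{F}$, then compute) is exactly the paper's, but your proposed witness is different from the one that actually makes the calculation close, and you stop short of the computation.  The paper does \emph{not} use a tilted two-eigenvalue Hessian; it takes the diagonal quadratic
\[
f(\x)=\tfrac12\bigl(Lx_1^2+\sqrt{L\ell}\,x_2^2+\ell x_3^2\bigr),
\]
with the intermediate eigenvalue $\sqrt{L\ell}$ in the $\x_{k-1}-\w_{k-1}$ direction, and instead places all the ``tilt'' in the data: $\w_{k-1}$, $\g$, and $\x_{k-1}-\w_{k-1}$ are positioned at carefully chosen points so that $\mathcal{M}_k$ is the affine plane $\{\x:\kappa^{-1/4}x_1/(1-\kappa^{-1})-v\kappa^{-1/4}x_2+x_3=\gamma\}$.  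With this set-up both the denominator of \eref{eq:qdef} and the constrained minima $\min_{\mathcal{M}_k}f$, $\mathrm{dist}(\x^*,\mathcal{M}_k)^2$ are computed in closed form (the latter via the elementary identity $\min\{Ax_1^2+Bx_2^2+Cx_3^2:ax_1+bx_2+cx_3=\gamma\}=\gamma^2/(a^2/A+b^2/B+c^2/C)$), and the ratio collapses to $1-\kappa^{-1/2}+O(\kappa^{-1})$ with no optimisation over an auxiliary angle.  The role of \eref{eq:assum5} in the paper is also different from what you anticipate: it is used only to keep the free scale parameter $v$ in $[-1,1]$ so that the asymptotic expansions are uniform, not to guarantee nondegeneracy of a ball-intersection lens.

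The gap in your proposal is therefore not a wrong idea but an unfinished one: you have not shown that a two-eigenvalue tilted Hessian with curvature $\ell$ in the $\x_{k-1}-\w_{k-1}$ direction can actually attain the $1-\kappa^{-1/2}-O(\kappa^{-1})$ bound after optimising over $\phi$ (and uniformly in the given $\Vert\x_{k-1}-\w_{k-1}\Vert$), and the paper's explicit construction suggests that the intermediate eigenvalue $\sqrt{L\ell}$ is what makes the bookkeeping close without residual optimisation.
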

Note that \eref{eq:agbound1} and \eref{eq:agbound2} imply that
the choice of new iterate $(\x_k,\y_k)$ made by AG is optimal up to a lower
order remainder term given the partial information used by AG.  In
contrast, IA is optimal (separately) for both terms in the numerator
of \eref{eq:qdef} for the specific $f$ and with no remainder term.

\begin{proof}
The proof of \eref{eq:agbound1} appears in \cite{KarimiVavasis2016}
(see (21), (22) and (43) in that paper, which use different notation
for AG).  Therefore, this proof establishes only \eref{eq:agbound2},
which involves constructing
a certain $f\in\mathcal{F}$ to attain this bound.
It suffices to prove the result for the $n=3$
case, since we can extend $f:\R^3\rightarrow\R$ to higher dimensions by adding
terms $\ell x_4^2+\cdots+\ell x_n^2$ and appending 0's to $\x_{k-1}$,
$\y_{k-1}$ and $\g$.

The theorem is invariant under rigid motions of $\R^3$, so we
can place $\w_{k-1}$ at an arbitrary point.  In addition,
we can rotate the two
vectors $\g$ and $\x_{k-1}-\w_{k-1}$ to arbitrary positions as long
as their lengths and their orthogonality are preserved. Starting with
$\g$, let $g_0$ denote $\Vert\g\Vert$. 
Rotate $\g$ to
$$\g=\gamma\left(
\begin{array}{c}
  -L\kappa^{-3/4}(=-\ell \kappa^{1/4}) \\
  0 \\
  \frac{\ell}{1-\kappa^{-1}} \\
\end{array}
\right),$$
where $\gamma$ is chosen to preserve the length of $\g$, in other words,
$$\gamma=(g_0/\ell) \cdot (\kappa^{1/2}+(1-\kappa^{-1})^{-2})^{-1/2}.$$
Translate $\w_{k-1}$ as follows:
$$\w_{k-1}=
\gamma\left(
\begin{array}{c}
  -\kappa^{-3/4} \\
  0 \\
  \frac{1}{1-\kappa^{-1}}
\end{array}
\right).$$
Finally, we rotate $\x_{k-1}-\w_{k-1}$ as follows:
$$\x_{k-1}-\w_{k-1}=
\gamma\left(
\begin{array}{c}
  \frac{\kappa^{-5/4}v^2(1-\kappa^{-1})}{(1-\kappa^{-1})^2+\kappa^{-1/2}} \\
  v\kappa^{-3/4}\\
  v^2\kappa^{-1}-\frac{\kappa^{-3/2}v^2}{(1-\kappa^{-1})^2+\kappa^{-1/2}}
\end{array}
\right)
$$
where $v$ is a scalar parameter that controls the length
of $\x_{k-1}-\w_{k-1}$.  In more detail, observe that
$\Vert\x_{k-1}-\w_{k-1}\Vert=\gamma\kappa^{-3/4}|v|+O(\kappa^{-1})$,
while $\Vert\g\Vert/L=\gamma\kappa^{-3/4}+O(\kappa^{-1})$.  Therefore,
to assure \eref{eq:assum5}, we restrict $|v|\le 1$.
It is also straightforward
to check that \eref{eq:assum6} is satisfied.
With these two definitions in hand, we can now write:
$$\x_{k-1}=\w_{k-1}+(\x_{k-1}-\w_{k-1})=
\gamma\left(
\begin{array}{c}
  -\kappa^{-3/4}+\frac{\kappa^{-5/4}v^2(1-\kappa^{-1})}{(1-\kappa^{-1})^2+\kappa^{-1/2}}  \\
  v\kappa^{-3/4}\\
  \frac{1}{1-\kappa^{-1}} +v^2\kappa^{-1}-\frac{\kappa^{-3/2}v^2}{(1-\kappa^{-1})^2+\kappa^{-1/2}}
\end{array}
\right),
$$
and
$$\y_{k-1} = \w_{k-1}-\sqrt{\kappa}(\x_{k-1}-\w_{k-1})=
\gamma\left(
\begin{array}{c}
  -\kappa^{-3/4}-\frac{\kappa^{-3/4}v^2(1-\kappa^{-1})}{(1-\kappa^{-1})^2+\kappa^{-1/2}}  \\
  -v\kappa^{-1/4}\\
  \frac{1}{1-\kappa^{-1}} -v^2\kappa^{-1/2}+\frac{\kappa^{-1}v^2}{(1-\kappa^{-1})^2+\kappa^{-1/2}}
\end{array}
\right).$$

Next, we define:
$$f(\x)=\frac{Lx_1^2+\sqrt{L\ell}x_2^2+\ell x_3^2}{2}.$$
It is straightforward to verify that $\nabla f$ is $L$-Lipschitz,
that $f$ is $\ell$-strongly convex, and that
$\nabla f(\w_{k-1})=\g$; thus $f\in\mathcal{F}$.  Also,
it is obvious that $\argmin\{f\}=\bz$ and $\min\{f\}=0$.

We evaluate the two terms in the denominator of \eref{eq:qdef}:
\begin{align*}
  f(\x_{k-1}) - \min\{f\} &= f(\x_{k-1}) \\
  &=\frac{\gamma^2}{2}
\left(L(\kappa^{-3/4}+O(\kappa^{-5/4}))^2+
\sqrt{L\ell}(v\kappa^{-3/4})^2+
\ell(1+O(\kappa^{-1}))^2\right) \\
&= \frac{\gamma^2\ell}{2}(1+\kappa^{-1/2}+O(\kappa^{-1})).
\end{align*}
Also,
\begin{align*}
  \Vert\y_{k-1}-\argmin\{f\}\Vert^2 &= \Vert\y_{k-1}\Vert^2 \\
  &=\gamma^2\left(O(\kappa^{-3/4})^2+
  v^2\kappa^{-1/2}+
  (1-v^2\kappa^{-1/2}+O(\kappa^{-1}))^2\right) \\
  &=\gamma^2(1-v^2\kappa^{-1/2}+O(\kappa^{-1})).
\end{align*}
Thus, the denominator of \eref{eq:qdef} simplifies to
$\gamma^2(2+(1-v^2)\kappa^{-1/2}+O(\kappa^{-1}))$.

Next, we need to solve two constrained
quadratic optimization problems to obtain a lower bound on
the numerator of \eref{eq:qdef}.  The constraint is
$\x\in\mathcal{M}_{k}$ for the first and
$\y\in\mathcal{M}_{k}$ for the second.
Imposing the constraint is simpler if it is written
as an inhomogenous linear equation;
one checks that
\begin{align*}
  \mathcal{M}_{k}&=\w_{k-1}+\Span\{\x_{k-1}-\w_{k-1},\g\} \\
  &=\left\{\x\in\R^3:\frac{\kappa^{-1/4}x_1}{1-\kappa^{-1}}-v\kappa^{-1/4}x_2+x_3=\gamma\right\},
\end{align*}
by substituting $\w_{k-1}$, $\x_{k-1}-\w_{k-1}$ and $\g$ into the
left-hand side of given equation and confirming that the values are $\gamma, 0, 0$
respectively.
It is also straightforward to check using a Lagrange-multiplier
argument that for any positive $A,B,C$ and any $(a,b,c)\ne (0,0,0)$,
$$\min\{Ax_1^2+Bx_2^2+Cx_3^2:ax_1+bx_2+cx_3=\gamma\}=\frac{\gamma^2}{a^2/A+b^2/B+c^2/C}.$$
Using this result for the first term of the numerator of \eref{eq:qdef}
yields that for any $\x\in\mathcal{M}_k$,
\begin{align*}
f(\x)-\min\{f\}&\ge \frac{1}{2}\cdot\frac{\gamma^2}{
  \frac{\kappa^{-1/2}}{L(1-\kappa^{-1})^2} +
  \frac{v^2\kappa^{-1/2}}{\sqrt{Ll}} +
  \frac{1}{\ell}} \\
&= \frac{\gamma^2\ell}{2}(1+O(\kappa^{-1})).
\end{align*}
As for the second term of the numerator, for any $\y\in\mathcal{M}_k$,
\begin{align*}
  \Vert\y-\argmin\{f\}\Vert^2 &\ge 
  \frac{\gamma^2}{
    \frac{\kappa^{-1/2}}{(1-\kappa^{-1})^2} +
    v^2\kappa^{-1/2} +
    1} \\
  &= \gamma^2(1-(1+v^2)\kappa^{-1/2}+O(\kappa^{-1})).
\end{align*}
Therefore, a lower bound on the numerator of \eref{eq:qdef}
is $\gamma^2(2-(1+v^2)\kappa^{-1/2}+O(\kappa^{-1}))$.
Finally, for any $\x,\y\in \mathcal{M}_k$,
\begin{align*}
q(\x,\y)&\ge \frac{\gamma^2(2-(1+v^2)\kappa^{-1/2}+O(\kappa^{-1}))} 
{\gamma^2(2+(1-v^2)\kappa^{-1/2}+O(\kappa^{-1}))}\\
&=1-\kappa^{-1/2}+O(\kappa^{-1}).
\end{align*}
\end{proof}

We now turn to the assumptions of the preceding theorem.
An assumption like \eref{eq:assum5} is necessary because,
in the situation that $\Vert\g\Vert\ll \Vert \x_{k-1}-\w_{k-1}\Vert$,
strong convexity (see \eref{eq:rdef0})
implies that the true minimizer $\x^*$ is
close to $\w_{k-1}-\g/\ell$, meaning that the update
to $\y_k$ implicit in AG is suboptimal. Thus,
an assumption along the lines of
\eref{eq:assum5}
is necessary to establish the optimality of AG.

Orthogonality assumption \eref{eq:assum6} appears to be unnecessary
and rather is a limitation of our construction, which uses
a quadratic function $f$.  Intuitively,
we need to construct a function
that varies more rapidly in one direction than another.  We
used a quadratic function, whose level curves have fixed orthogonal axes,
which creates a requirement of orthogonality in the two directions.
However, a more general convex function may have level curves
whose axes of elongation vary from one level curve to the next.

As in the concluding remarks
of Section~\ref{sec:GDanalysis2},
the proof of the optimality of the algorithm combined with
consideration of the assumptions uncovers situations when the algorithm
may be making suboptimal choices.  In the case of AG, this occurs
on iterations when $\Vert\nabla f(\w_{k-1})\Vert$ is unexpectedly small.

\section{A hybrid nonlinear conjugate gradient}
\label{sec:hybrid}

In this section, we propose a hybrid nonlinear conjugate gradient algorithm with
a convergence guarantee for smooth, strongly convex functions which is
related to an algorithm from the PhD thesis of the first author
\cite{karimi:thesis}.  Classical
nonlinear conjugate gradient (NCG) methods such as the Fletcher-Reeves and
Polak-Ribi\`ere methods are known to have poor worst-case performance
for this class of functions--worse even than steepest descent.  See
\cite{NemYud83} for more information.  The method developed in
this section guarantees $O(\log(1/\epsilon)\sqrt{L/\ell})$ convergence,
the best possible, and reduces to the optimal CG 
algorithm in the case of a quadratic function.

The algorithm proposed below uses classical nonlinear conjugate
gradient steps mixed with geometric descent steps.
The rationale for developing this algorithm is as follows.  Classical NCG, although
it has no global convergence bound even for strongly convex functions,
behaves well on ``nearly quadratic'' functions.  For typical objective
functions occurring in practice, nearly quadratic behavior is expected close
to the solution.  Therefore, a method that can switch between steps with
a guaranteed complexity and NCG steps has the possibility of outperforming
both methods.

A summary of the algorithm is as follows.  At the beginning of
iteration $k$, the algorithm has a quadruple
$(\x_{k-1},\y_{k-1},\p_{k-1},\tilde\sigma_{k-1})$.  From this
quadruple, a step of nonlinear conjugate gradient can be
applied.  For the line search, the line-search function of $\alpha$,
namely, $f(\x_{k-1}+\alpha\p_{k-1})$, is approximated by a univariate
quadratic, whose quadratic coefficient is obtained by computing
$\p_{k-1}^T\nabla^2 f(\x_{k-1})\p_{k-1}$ using reverse-mode automatic
differentiation.  This approximation is exact in the case that $f$
itself is a quadratic function, in which case the hybrid algorithm
reproduces the steps of linear conjugate gradient.

The hybrid algorithm then computes $\x_k=\x_{k-1}+\alpha\p_{k-1}$ and
computes $\y_k$ 
as in the GD algorithm.  It checks whether $f$ has decreased and
whether $\tilde\sigma_{k}^2\le (1-\sqrt{\ell/L})\tilde\sigma_{k-1}^2$.
If so, the iteration is over, and the nonlinear CG step is accepted.
If not, then a GD step is taken instead.
The detailed specification of the algorithm is as follows.

\begin{align}
& \mbox{\bf Hybrid Nonlinear Conjugate Gradient} \notag\\
& \x_0:=\mbox{arbitrary};\quad\y_0:=\x_0;\quad\p_0:=\bz\notag \\
& \g_{-1}:=\bz;
\tilde\sigma_0 := \sqrt{2}\Vert\nabla f(\x_0)\Vert/\ell \notag\\
& \mbox{for }k = 1,2,\ldots, \notag\\
& \hphantom{\mbox{for }} \g_{k-1}:=\nabla f(\x_{k-1})\notag\\
& \hphantom{\mbox{for }} (\x_k^{\rm CG}, \p_k) :=
{\it CGSTEP}(\x_{k-1},\p_{k-1},\g_{k-2},\g_{k-1},k) \notag\\
& \hphantom{\mbox{for }} (\y_k,\tilde\xi^*_k) :=
{\it YCOMPUTE}(\x_{k-1},\g_{k-1},\y_{k-1},\tilde\sigma_{k-1}) \notag\\
& \hphantom{\mbox{for }} \hat\gamma_k^{\rm CG} := 2(f(\x_k^{\rm CG})-f(\x_{k-1}))/\ell 
\label{eq:hatgamma1}\\
& \hphantom{\mbox{for }} \tilde\sigma_k^{\rm CG} := 
\sqrt{(\tilde\xi^*_k)^2+
\hat\gamma_k^{\rm CG}}\notag \\
& \hphantom{\mbox{for }}\mbox{if } \hat\gamma_k^{\rm CG} \le 0 \mbox{ and }
(\tilde\sigma_k^{\rm CG})^2\le \left(1-\sqrt{\ell/L}\right)\tilde\sigma_{k-1}^2 
\notag\\
&\hphantom{\mbox{for if }} \x_{k}:=\x_{k-1}^{\rm CG} \notag\\
&\hphantom{\mbox{for if }} \tilde\sigma_{k}:=\tilde\sigma_k^{\rm CG} \notag\\
& \hphantom{\mbox{for }}\mbox{else}\notag \\
& \hphantom{\mbox{for if }} \obx_{k-1}:=\x_{k-1}-\g_{k-1}/L \notag\\
&\hphantom{\mbox{for if }} \x_k:=\argmin\{f(\x):\x\in\aff\{\obx_{k-1},\y_k\}\}
\label{eq:linesearch}\\
& \hphantom{\mbox{for if }} \hat\gamma_k := 2(f(\x_k)-f(\x_{k-1}))/\ell \label{eq:hatgamma2}\\
& \hphantom{\mbox{for if }} \tilde\sigma_k := 
\sqrt{(\tilde\xi^*_k)^2+
\hat\gamma_k}\notag \\
& \hphantom{\mbox{for if }} \p_k := \x_k-\x_{k-1} \label{eq:newp} \\
& \hphantom{\mbox{for }} \mbox{end} \notag \\
& \mbox{end} \notag 
\end{align}

\begin{align}
&\mbox{{\bf Function }
${\it CGSTEP}(\x_{k-1},\p_{k-1},\g_{k-2},\g_{k-1},k)$} \notag\\
&\mbox{if }k == 1 \notag \\
& \hphantom{\mbox{if }}\p_k := -\g_{k-1} \notag \\
& \mbox{else} \notag\\
& \hphantom{\mbox{if }} \z := \g_{k-1}-\g_{k-2} \notag \\
& \hphantom{\mbox{if }}\beta_{k} := 
\frac{1}{\z^T\p_{k-1}}\left(
\z - \frac{2\p_{k-1}\Vert\z\Vert^2}{\z^T\p_{k-1}}
\right)^T\g_{k-1} \label{eq:hzbeta} \\
& \hphantom{\mbox{if }}\p_k:= \beta_k\p_{k-1}-\g_{k-1} \notag\\
& \mbox{end} \notag\\
& \alpha_k := -\frac{\p_k^T\g_{k-1}}{\p_k^T\nabla^2 f(\x_{k-1})\p_k} \label{eq:qstep} \\
& \x_k := \x_{k-1}+\alpha_k\p_k \notag \\
& \mbox{return } (\x_k,\p_k) \notag
\end{align}

\begin{align}
&\mbox{{\bf Function }
${\it YCOMPUTE}(\x_{k-1},\g_{k-1},\y_{k-1},\tilde\sigma_{k-1})$} \notag\\
& \oobx_{k-1} := \x_{k-1}-\g_{k-1}/\ell\notag\\
& \tilde\rho_{k-1} := \Vert \g_{k-1}\Vert /\ell \notag \\
& \mbox{if } \tilde\sigma_{k-1}^2 \le 2\tilde\rho_{k-1}^2 \notag \\
& \hphantom{\mbox{if }} \delta_{k-1} := \Vert\y_{k-1}-\oobx_{k-1}\Vert \label{eq:compdelta}\\
& \hphantom{\mbox{if }}\mbox{if }
\delta_{k-1} > \tilde\rho_{k-1} \mbox{ and } 
\tilde\rho_{k-1}>|\tilde\rho_{k-1}^2-\tilde\sigma_{k-1}^2|^{1/2}
\label{eq:if2} \\
& \hphantom{\mbox{if if }}
\lambda_k := 
\frac{\delta_{k-1}^2+\tilde\rho_{k-1}^2-\tilde\sigma_{k-1}^2}{2\delta_{k-1}^2} \notag \\
& \hphantom{\mbox{if if }}\tilde\xi^*_k :=
\frac{1}{2}\sqrt{2\tilde\rho_{k-1}^2+2\tilde\sigma_{k-1}^2-\delta_{k-1}^2-
\frac{(\tilde\rho_{k-1}^2-\tilde\sigma_{k-1}^2)^2}{\delta_{k-1}^2}} \notag \\
& \hphantom{\mbox{if }}\mbox{else} \notag\\
& \hphantom{\mbox{if if }}\lambda_k := 1 \notag \\
& \hphantom{\mbox{if if }}\tilde\xi_k^*:= \tilde\sigma_{k-1} \notag \\
& \hphantom{\mbox{if }} \mbox{end} \notag \\
& \mbox{else} \notag \\
& \hphantom{\mbox{if }} \lambda_k := 0 \notag \\
& \hphantom{\mbox{if }} \tilde\xi_k^* := \tilde\rho_{k-1} \notag \\
& \mbox{end} \notag \\
& \y_k := (1-\lambda_k)\oobx_{k-1} + \lambda_k\y_{k-1} \notag \\
& \mbox{return } (\y_k,\tilde\xi_k^*) \notag
\end{align}

Some remarks on this procedure are as follows.  The variable
$\hat\gamma_k$ in \eref{eq:hatgamma1}
and \eref{eq:hatgamma2} stands for $\gamma_k-\gamma_{k-1}$, where 
$\gamma_k$ is defined as in \eref{eq:gammakdef}.
The line-search implicit in \eref{eq:linesearch} is carried
out with a univariate Newton method.  Because we have not made
sufficient assumptions about $f$ to guarantee convergence of 
Newton's method, the Newton method is safeguarded with a bisection method.
The univariate second derivative of $f$ needed for the Newton method
can be computed using reverse-mode automatic differentiation in time
proportional to evaluate $f(\x)$ (refer to \cite{NocedalWright}).  This
univariate second derivative is also needed in \eref{eq:qstep}.

The formula for $\beta_k$ in \eref{eq:hzbeta} is from the
CG-Descent algorithm of Hager and Zhang \cite{HagerZhang}.  As mentioned
earlier, the formula for $\alpha_k$ appearing in \eref{eq:qstep} is
based on a univariate quadratic Taylor expansion of the line-search function
at $\x_k$ in the direction $\p_k$.  This formula is exact if $f$ itself
is quadratic, but in all other cases it is speculative.  However, if it
yields a poor answer, the overall algorithm is still robust because 
when the CG step gives a poor answer, the GD algorithm serves as a backup.

The main theorem about this method, which follows from the material
presented so far, is as follows.

\begin{theorem}
Assuming exact line-search in \eref{eq:linesearch}, 
the Hybrid NCG algorithm produces a sequence of iterates
$(\x_k,\y_k,\sigma_k)$ satisfying \eref{eq:sigma-ub} and
\eref{eq:sigma-decr}.  Furthermore, if $f(\x)$ is a quadratic function,
then Hybrid NCG produces the same sequence of iterates as linear
conjugate gradient.
\end{theorem}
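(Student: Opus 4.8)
The plan is to establish the two potential inequalities by induction on $k$ and to treat the quadratic claim separately. The organizing observation is that the pair $(\y_k,\tilde\xi^*_k)$ returned by \emph{YCOMPUTE} is computed from the iteration-$(k-1)$ data $\x_{k-1},\g_{k-1},\y_{k-1},\tilde\sigma_{k-1}$ alone, by exactly the GD formulas, so it is identical in the two branches of the main loop; only $\x_k$ (and hence $\hat\gamma_k=\gamma_k-\gamma_{k-1}$, defined through $f(\x_k)$) is branch-dependent. Thus the $\y$-update is analyzed once, by the argument of the first theorem of Section~\ref{sec:GDanalysis}, and the $\x$-update branch-by-branch. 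For \eref{eq:sigma-ub}, since $\tilde\sigma_k^2=(\tilde\xi^*_k)^2+\hat\gamma_k$, it suffices to show $\Vert\y_k-\x^*\Vert^2\le(\tilde\xi^*_k)^2-\gamma_{k-1}$. I would prove this exactly as in GD, with $\x=\oobx_{k-1}$, $\y=\y_{k-1}$ and the identifications of that theorem, checking that the hypotheses of Lemma~\ref{lem:ballintersect2} hold \emph{without} the orthogonality \eref{eq:gradorth} (which can be lost after a CG step): $\rho+\sigma\ge\delta$ follows from $\x^*\in B(\oobx_{k-1},\rho)$ (by \eref{eq:gd.rdef0}), $\x^*\in B(\y_{k-1},\sigma)$ (induction hypothesis) and the triangle inequality, while $\delta\ge\sqrt{|\rho^2-\sigma^2|}$ is precisely what the inner test \eref{eq:if2} verifies. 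The two fallbacks are immediate: $\lambda_k=1$ gives $\y_k=\y_{k-1}$, $\tilde\xi^*_k=\tilde\sigma_{k-1}$, so the claim is the hypothesis, and $\lambda_k=0$ gives $\y_k=\oobx_{k-1}$, $\tilde\xi^*_k=\tilde\rho_{k-1}$, so it is again \eref{eq:gd.rdef0}.

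For \eref{eq:sigma-decr} the CG branch is immediate, because the step is accepted only when the test $(\tilde\sigma^{\rm CG}_k)^2\le(1-\sqrt{\ell/L})\tilde\sigma_{k-1}^2$ passes and then $\tilde\sigma_k=\tilde\sigma^{\rm CG}_k$. In the GD branch $\x_k$ is the exact minimizer of $f$ over $\aff\{\obx_{k-1},\y_k\}$, i.e.\ the GD update \eref{eq:gd.xupd}; in particular $f(\x_k)\le f(\obx_{k-1})$, so $\hat\gamma_k$ obeys \eref{eq:gammadiff}. When \eref{eq:if2} holds, \eref{eq:gd.chain1} is in force and the unimodality argument of the second theorem of Section~\ref{sec:GDanalysis} applies verbatim; when the outer test fails ($\lambda_k=0$) its second case gives the bound. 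This settles every non-degenerate configuration in both branches.

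The main obstacle is the leftover degenerate case of the GD branch, in which the outer test $\tilde\sigma_{k-1}^2\le2\tilde\rho_{k-1}^2$ holds but \eref{eq:if2} fails, forcing $\lambda_k=1$ and $\tilde\xi^*_k=\tilde\sigma_{k-1}$. Here the $\y$-update makes no geometric progress, so the whole required decrease must be supplied by $\hat\gamma_k$, yet the generic bound $\hat\gamma_k\le-\kappa^{-1}\tilde\rho_{k-1}^2$ coming from \eref{eq:gammadiff} is too weak. Expanding $\delta_{k-1}^2=\Vert\y_{k-1}-\oobx_{k-1}\Vert^2$ shows the test fails either because $(\y_{k-1}-\x_{k-1})^T\g_{k-1}<0$, so that $\y_{k-1}-\x_{k-1}$ is a descent direction (a situation a preceding CG step can create), or at the boundary $\tilde\sigma_{k-1}^2\in\{0,2\tilde\rho_{k-1}^2\}$. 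The hard part will be to show that in the descent-direction case the exact line search over $\aff\{\obx_{k-1},\y_{k-1}\}$ produces enough extra decrease of $f$ to meet \eref{eq:sigma-decr}, and to dispose of the boundary $\tilde\sigma_{k-1}^2=2\tilde\rho_{k-1}^2$ (at which the main- and second-case values of $\tilde\xi^*_k$ both collapse to $\tilde\rho_{k-1}$) so that the choice $\tilde\xi^*_k=\tilde\sigma_{k-1}$ does not overshoot; I expect this to demand the most care.

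Finally, for the quadratic claim I would argue by induction that Hybrid NCG reproduces linear CG. For $f(\x)=\x^TA\x/2-\b^T\x$ one has $\nabla^2 f\equiv A$, so the step length \eref{eq:qstep} is the exact one-dimensional minimizer and the CG-Descent formula \eref{eq:hzbeta} collapses to the Hestenes--Stiefel $\beta$; hence $(\x^{\rm CG}_k,\p_k)$ is a genuine linear-CG step. Moreover, by the CG analysis of Section~\ref{sec:CGanalysis2}, when $\y_k,\tilde\sigma_k$ are defined through the GD formulas linear CG already satisfies both $\hat\gamma^{\rm CG}_k\le0$ (its Krylov-space minimizer decreases $f$) and $(\tilde\sigma^{\rm CG}_k)^2\le(1-\sqrt{\ell/L})\tilde\sigma_{k-1}^2$, so the acceptance test succeeds on every iteration and the CG branch is always taken. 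The equivalence theorem of Section~\ref{sec:CGanalysis} then identifies the resulting $\x_k$ with the linear-CG iterates, completing the argument.
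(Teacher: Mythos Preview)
Your overall plan coincides with what the paper intends: the paper gives no detailed proof, simply stating that the theorem ``follows from the material presented so far,'' and your decomposition (the $\y$-update and $\tilde\xi_k^*$ come from the GD analysis of Section~\ref{sec:GDanalysis}; the CG branch passes \eref{eq:sigma-decr} by the acceptance test; the GD branch by the argument of the second theorem of Section~\ref{sec:GDanalysis}; the quadratic case by Sections~\ref{sec:CGanalysis}--\ref{sec:CGanalysis2}) is exactly that material, stitched together correctly.

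Where you have gone further than the paper is in isolating the degenerate case in which the outer test $\tilde\sigma_{k-1}^2\le 2\tilde\rho_{k-1}^2$ holds but \eref{eq:if2} fails, and in observing that this can be triggered by a preceding accepted CG step, since \eref{eq:gradorth} need not hold for $\x_{k-1}=\x_{k-1}^{\rm CG}$. You are right that the generic bound \eref{eq:gammadiff} gives only $\hat\gamma_k\le -\kappa^{-1}\tilde\rho_{k-1}^2\le -\tilde\sigma_{k-1}^2/(2\kappa)$, which is too weak by a factor $\kappa^{1/2}$ for \eref{eq:sigma-decr}. The paper does not treat this case: its discussion after the theorem attributes failure of \eref{eq:if2} solely to inexact line search (``because the line-search is only approximate, \eref{eq:gradorth} does not hold exactly''), which suggests the authors did not consider the CG-step trigger under the exact-line-search hypothesis of the theorem. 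So the gap you have flagged is real, and it is a gap in the paper's argument as well as in yours; your proposed route through the extra descent available on $\aff\{\obx_{k-1},\y_{k-1}\}$ is plausible but, as you say, not yet complete.

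One small correction to your quadratic argument: on the very first iteration $\y_0=\x_0$, so $\delta_0=\Vert\y_0-\oobx_0\Vert=\Vert\g_0\Vert/\ell=\tilde\rho_0$ exactly, and the strict test in \eref{eq:if2} fails even in the quadratic case. The GD formula \eref{eq:gd.xik} still gives $\tilde\xi_1^*=\tilde\rho_0$ at equality, so the intended reading is $\delta_{k-1}\ge\tilde\rho_{k-1}$, which the Krylov orthogonality $\r_{k-1}\perp(\y_{k-1}-\x_{k-1})$ of Section~\ref{sec:CGanalysis2} does guarantee (via Pythagoras) for all subsequent linear-CG iterates. With that non-strict reading your inductive argument that the CG branch is always accepted is correct.
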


We now turn to three important numerical issues with this method.
The first issue to note is that function {\it YCOMPUTE} has an ``if''
statement \eref{eq:if2} not present in \eref{eq:gd.rpdef}--\eref{eq:sigmadef2}
when the GD algorithm was described.  
In the case of the ``exact'' GD algorithm, the condition of the
if-statement is guaranteed to hold as established in Section~\ref{sec:GDanalysis}.
However, because the line-search is only approximate, \eref{eq:gradorth} does
not hold exactly, and therefore the condition of the \eref{eq:if2} may occasionally
fail.  In this case, we safeguard its failure by defining $\y_k:=\y_{k-1}$
and $\sigma_k:=\sigma_{k-1}$ (so that $\tilde\sigma_k$ is updated only due
to the decrease in the objective), i.e., we keep the same containing sphere for
the optimizer as on the previous step.

The second numerical issue concerns the computation of $\delta_{k-1}$ in 
\eref{eq:compdelta}.  This formula is prone to roundoff error as the algorithm
converges because $\y_{k-1}$ and $\oobx_{k-1}$ both tend to $\x^*$ in the limit.
In our implementation,
we addressed this issue by maintaining a separate program variable storing
the vector $\y_k-\x_k$.  This vector is updated using a recurrent formula
that is straightforward to derive; the recurrence updates $\y_k-\x_k$ using
vectors that also tend to $\bz$.  Given an accurate representation
of $\y_k-\x_k$ it is clear that 
\eref{eq:compdelta} can be computed without significant roundoff issues.
A similar issue and similar workaround is applied to \eref{eq:newp}.

The third numerical issue concerns the subtractions in
\eref{eq:hatgamma1}
and \eref{eq:hatgamma2}, which are also prone to roundoff error
as $\x_k$  converges.  These errors could upset
the computation of $\tilde\sigma_k$.
Our implementation addressed this using ``computational divided
differences''; see, e.g., \cite{divdiff}.

\section{Computational Results}
\label{sec:comp}

We implemented four methods: Geometric Descent (GD), Accelerated Gradient (AG),
Hybrid Nonlinear Conjugate Gradient (HyNCG, described in the preceding section), 
and NCG.  NCG stands for nonlinear conjugate gradient using the Hager-Zhang
formula for $\beta_k$ given by \eref{eq:hzbeta}.  (The entirety of their
NCG method is called ``CG-Descent''; however, we did not implement other aspects
of CG-Descent such as the line search.)
The line search used by
GD, HyNCG and NCG is based on Newton's method and
is safeguarded with a bisection.  The techniques to address numerical
issues described in the
preceding section were applied in GD, HyNCG and NCG. (The line
search and the numerical techniques are not needed for AG).

We applied these four methods to two problem classes:
approximate BPDN and hinge-loss halfspace classification.

BPDN (basis pursuit denoising) refers to the 
unconstrained convex optimization
problem:
$$\min \Vert A\x-\b\Vert^2 +\lambda\Vert\x\Vert_1$$
in which $\lambda > 0$ and $A\in\R^{m\times n}$ 
has fewer rows than columns, so that the problem
is neither strongly convex nor smooth.  However, the following
approximation (called APBDN)
is both smooth and strongly convex on any bounded domain:
$$\min \Vert A\x-\b\Vert^2 +\lambda\sum_{i=1}^n\sqrt{x_i^2+\delta}$$
where $\delta>0$ is a fixed scalar.  It is easy to see that as
$\delta\rightarrow 0$, the original problem is recovered.  As
$\delta\rightarrow 0$, $\ell\rightarrow 0$ and $L\rightarrow\infty$,
where $\ell,L$ are the moduli of strong, smooth convexity.

In our tests of ABPDN 
we took $A$ to be a subset of $\sqrt{n}$ rows
of the discrete-cosine transform matrix of size $n\times n$, where
$n$ is an even power of 2.  (This matrix and its transpose, although
dense, can be applied in $O(n\log n)$ operations.)  The subset of
rows was selected to be those numbered by the first $m=\sqrt{n}$ prime
integers
in order to get reproducible
pseudorandomness in the choices.  Similarly, in 
order to obtain a pseudorandom $\b$, we selected $\b\in\R^m$ 
according to the formula $b_i=\sin(i^2)$.  The value of
$\lambda$ was fixed at $10^{-3}$ in all tests; the convergence
criterion was $\Vert\nabla f(\x_k)\Vert\le 10^{-8}$.
 Finally,
we varied $\delta=10^{-2},10^{-3},10^{-4}$ and we tried both
$n=65536$ and $n=262144$.

The second test-case is the hinge-loss (HL) function for half-space
identification taken from \cite{bubeck}, which is as follows:
$f(\x)=H(\b\circ(A\x))+\lambda\Vert\x\Vert^2/2$, where
$A$ is a given $m\times n$
matrix,
$\b$ is a given $m$-vector of $\pm 1$,  `$\circ$' denotes Hadamard product (i.e., the entrywise
product of two vectors), $\lambda>0$ is a regularization parameter, and
$H(\v)=\sum_{i=1}^mh(v_i)$ where 
$$h(v)=\left\{
\begin{array}{ll}
0.5-v, & v\le 0, \\
(1-v)^2/2, & v\in[0,1], \\
0, & v\ge 1.
\end{array}
\right.$$
Minimizing $f(\cdot)$ corresponds to finding a hyperplane determined
by $\x$ of the form $U=\{\z\in\R^n:\x^T\z=0\}$ such that rows $i$ of $A$,
$i=1,\ldots,m$, for which
$b_i=1$ lie on one side of $U$ (i.e., $A(i,:)\x>0$)
while rows $i$ of $A$ for which
$b_i=-1$ lie on the opposite side (i.e., $A(i,:)\x<0$).  
The objective function penalizes misclassified
points as well as penalizing a large value of $\x$.

This function is smooth and strongly convex.  As $\lambda\rightarrow 0$, the
strong convexity parameter $\ell$ vanishes.

Unlike \cite{bubeck}, who test GD applied
to this function on data sets available
on the web, we have tested the four algorithms on synthetic data for the
purpose of better control over experimental conditions.  
In our tests, $m=200000$, $n=447$ (so that $n\approx\sqrt{m}$), 
$\lambda=3\cdot 10^{-1}, 3\cdot 10^{-2}, 3\cdot 10^{-3}$.
For each $i$, $i=1,\ldots, m$, $b(i)=\pm 1$ chosen at random with
probability $0.5$.  If $b(i)=1$, then $A(i,:)=[1,\ldots,1]/\sqrt{n}+\w_i^T$,
where $\w_i$ is a noise vector chosen as a spherical Gaussian with 
covariance matrix ${\rm diag}(\sigma^2,\ldots,\sigma^2)$, where $\sigma=0.4$.
If $b(i)=-1$, then $A(i,:)=-[1,\ldots,1]/\sqrt{n}+\w_i^T$.
For these tests, the convergence test was $\Vert \nabla f(\x_k)\Vert \le 10^{-6}$.

The results of all tests are shown in Table~\ref{tab:compresults}.  
For NCG and GD, the numbers in this table are the number of inner iterations (line
search steps), which is the dominant cost in these algorithms.
In the case of HyNCG, we have reported the sum of the number of CG steps
(which do not require a line-search) plus the number of inner line-search
iterations.  For AG we have
reported the number of outer iterations.
The notation DNC indicates that the algorithm did achieve the
requisite tolerance after
$10^5$ outer iterations.

\begin{table}
\begin{center}
\begin{tabular}{|l|rrrr|}
\hline
 &  GD & AG & NCG & HyNCG \\
\hline
ABPDN, $n=65536$, $\delta=10^{-2}$ & 58,510 & DNC & 12,345 & 757 \\
ABPDN, $n=65536$, $\delta=10^{-3}$ & 314,367 & DNC & DNC & 9,510 \\
ABPDN, $n=65536$, $\delta=10^{-4}$ & 585,362& DNC & DNC &  28,395 \\
ABPDN, $n=262144$, $\delta=10^{-2}$ & 7,734 & 34758 & 488 & 123 \\
ABPDN, $n=262144$, $\delta=10^{-3}$ & 782,223 & DNC & DNC & 17,195 \\
ABPDN, $n=262144$, $\delta=10^{-4}$ & DNC & DNC & DNC & 40,328 \\
HL, $m=200000$, $\lambda =0.3$ & 154 & 13,170 &  112 & 37 \\
HL, $m=200000$, $\lambda =0.03$ & 151 & 29,218 & 110 & 37 \\ 
HL, $m=200000$, $\lambda =0.003$ & 151 & 58,793 & 113 & 44 \\ 
\hline
\end{tabular}
\end{center}
\caption{Number of iterations (see text for details) of four algorithms
on nine synthetic test cases.}
\label{tab:compresults}
\end{table}
One sees from the table that HyNCG was superior in every test case, sometimes by
a wide margin.  An unexpected feature of the table, for which we currently do not have
an explanation, is that in the case of the HL suite of problems, the 
number of iterations was nearly invariant with respect to variation in $\lambda$,
except for AG, whose running time grows steadily with decreasing $\lambda$.

To conclude this section, we also consider two hybrid algorithm
that do not use the potential.  They are as follows:
compute a step of both GD and CG, and then select the step that 
decreases either $\Vert \nabla f(\x_k)\Vert$ (denoted HyNCG/gr) or
$f(\x_k)$ (denoted HyNCG/f) by the greatest amount.  The results of
this experiment are presented in Table~\ref{tab:compresults2}.

\begin{table}
\begin{center}
\begin{tabular}{|l|rrr|}
\hline
 &  HyNCG & HyNCG/gr & HyNCG/f \\
\hline
ABPDN, $n=65536$, $\delta=10^{-2}$ & 757  & 1,346 & 36,708 \\
ABPDN, $n=65536$, $\delta=10^{-3}$ & 9,510& 24,509  & 252,050  \\
ABPDN, $n=65536$, $\delta=10^{-4}$ & 28,395 & 67,354  & 105,947 \\
ABPDN, $n=262144$, $\delta=10^{-2}$ & 123 & 184 & 394 \\
ABPDN, $n=262144$, $\delta=10^{-3}$ & 17,195 & 37,435 & 652,435 \\
ABPDN, $n=262144$, $\delta=10^{-4}$ & 40,328 & 86,445 & 213,226 \\
HL, $m=200000$, $\lambda =0.3$ & 37 & 57 &  45 \\
HL, $m=200000$, $\lambda =0.03$ & 37 & 57 &  45 \\ 
HL, $m=200000$, $\lambda =0.003$ & 44 & 64 & 61 \\ 
\hline
\end{tabular}
\end{center}
\caption{Number of iterations (see text for details) of three
different hybrids on nine synthetic test cases.}
\label{tab:compresults2}
\end{table}

The table shows that the hybrid
based on the potential outperforms the other two methods, often
by a factor of 2 and sometimes
by a large factor.
The reason for this follows from
the discussion in Section~\ref{sec:computable}.
Although the norm of the gradient can be used (and in fact, {\em was} used
for all tests in this section) as a termination criterion, it is not
helpful for measuring progress step by step.
The two methods HyNCG/gr and HyNCG/f must carry out two evaluations
per iteration to decide which step is preferable.
In contrast, the hybrid HYNCG based on the
potential can select the CG step without trying an alternative
provided the potential shows sufficient decrease.  

\section{Conclusions}
We have demonstrated that a single computable potential bounds the
convergence of three algorithms, conjugate gradient, accelerated gradient
and geometric descent.  We have also pointed out other connections between
the algorithms, namely, their relationship to an idealized algorithm and
their relationship to the Bubeck-Lee-Singh lemma.
The existence of this potential enables the formulation
of a hybrid method for convex optimization that duplicates the steps of conjugate
gradient in the case of conjugate gradient but nonetheless achieves the
optimal complexity for general smooth, strongly convex problems.  
Directions for future work include the following.
\begin{itemize}
\item
The hybrid algorithm requires prior knowledge of $\ell,L$; it would be interesting
to develop an algorithm with the same guarantees that does not need prior knowledge
of them.  Note that linear conjugate gradient does not need any such prior knowledge
of the coefficient matrix $A$.
\item
It would be interesting to establish a theoretical result about the improved
performance of the hybrid algorithm in the case of ``nearly quadratic'' functions.
\item
Although accelerated gradient has been extended well beyond the realm
of unconstrained smooth, strongly convex functions, none of the other
algorithms has been.  It would be interesting to extend the conjugate
gradient ideas outside this space.  
Also interesting is the extension to constrained or composite convex
minimization.  See, for example, \cite{KarimiVavasis}.
\end{itemize}
\bibliographystyle{plain}
\bibliography{optimization}
\end{document}